\title{From the function-sheaf dictionary to quasicharacters of $p$-adic tori}
\author{Clifton Cunningham}
\author{David Roe}
\address{Department of Mathematics and Statistics, University of Calgary, 2500 University Drive Northwest, Calgary, Alberta, Canada, {T2N~1N4}.}
\email{cunning@math.ucalgary.ca}
\address{Department of Mathematics, University of British Columbia, 1984 Mathematics Road, Vancouver, British Columbia, Canada, {V6T~1Z2}.}
\email{roed.math@gmail.com}
\subjclass[2010]{14F05 (primary), 14L15 (secondary), 22E50 (tertiary)}
\keywords{character sheaves, p-adic tori, N\'eron models, Greenberg functor, geometrization, quasicharacter sheaves}
\theoremstyle{plain}
      \newtheorem{theorem}{Theorem}[section]
      \newtheorem*{theorem*}{Theorem}
      \newtheorem{proposition}[theorem]{Proposition}
      \newtheorem{lemma}[theorem]{Lemma}
      \newtheorem{corollary}[theorem]{Corollary}
      \theoremstyle{definition}
      \newtheorem{definition}[theorem]{Definition}
      \newtheorem{remark}[theorem]{Remark}
      \newtheorem{example}[theorem]{Example}
\tikzset{every picture/.style={>=stealth},label/.style={font=\footnotesize}}
\newcommand{\ZZ}{{\mathbb{Z}}}
\newcommand{\NN}{{\mathbb{N}}}
\newcommand{\EE}{\mathbb{\bar Q}_\ell}
\newcommand{\OK}{\mathcal{O}_K}
\newcommand{\OL}{\mathcal{O}_L}
\newcommand{\OO}[1]{\mathcal{O}_{#1}}
\newcommand{\bFq}{\bar{k}}
\newcommand{\Fq}{k}
\newcommand{\EEx}{\EE^\times}
\newcommand{\ZEx}{\mathbb{\bar Z}_\ell^\times}
\newcommand{\Weil}[1]{\mathcal{W}_{#1}}
\newcommand{\m}{{\mathfrak{m}}}
\newcommand{\Gm}[1]{\mathbb{G}_{\hskip-1pt\textbf{m},#1}}
\DeclareMathOperator{\Gal}{Gal}
\newcommand{\Frob}[1]{\operatorname{Fr}_{#1}}
\DeclareMathOperator{\Aut}{Aut}
\DeclareMathOperator{\Hom}{Hom}
\DeclareMathOperator{\coker}{coker}
\DeclareMathOperator{\Gr}{Gr}
\DeclareMathOperator{\id}{id}
\DeclareMathOperator{\Ext}{Ext}
\DeclareMathOperator{\Hh}{H}
\DeclareMathOperator{\Res}{Res}
\DeclareMathOperator{\Nm}{Nm}
\DeclareMathOperator{\trace}{Tr}
\DeclareMathOperator{\Lang}{Lang}
\DeclareMathOperator{\Tor}{Tor}
\newcommand{\Spec}[1]{{\operatorname{Spec}(#1)}}
\newcommand{\sheafHom}{{\mathscr{H}\hskip-4pt{\it o}\hskip-2pt{\it m}}}
\newcommand{\ceq}{{\, :=\, }}
\newcommand{\tq}{{\ \vert\ }}
\newcommand{\iso}{{\ \cong\ }}
\newcommand{\trFrob}[1]{t_{#1}}
\newcommand{\TrFrob}[1]{\operatorname{Tr}_{#1}}
\newcommand{\cs}[1]{{\mathcal{#1}}}
\newcommand{\gcs}[1]{{\mathcal{\bar #1}}}
\newcommand{\CS}{{\mathcal{C\hskip-0.8pt S}}}
\newcommand{\bCS}{{\CS_0}}
\newcommand{\CSiso}[1]{\CS(#1)_{/\text{iso}}}
\newcommand{\bCSiso}[1]{\bCS(#1)_{/\text{iso}}}
\newcommand{\catname}[1]{\normalfont{\textsf{#1}}}
\newcommand{\Sch}[1]{{\catname{Sch}_{/#1}}}
\newcommand{\QCS}{{\mathcal{QC\hskip-0.8pt S}}}
\newcommand{\QCSiso}[1]{\QCS(#1)_{/\text{iso}}}
\newcommand{\labitem}[2]{
\def\@itemlabel{\textbf{#1}}
\item
\def\@currentlabel{#1}\label{#2}}
\renewcommand{\bf}{\bar{f}}
\newcommand{\bg}{{\bar{g}}}
\newcommand{\bm}{\bar{m}}
\newcommand{\bG}{\bar{G}}
\newcommand{\bH}{\bar{H}}
\newcommand{\brho}{{\bar\rho}}
\newcommand{\tight}[3]{\hspace{-#1pt}{#2}\hspace{-#3pt}}
\newcommand{\bGxG}{\text{$\bar{G} \tight{1}{\times}{1} \bar{G}$}}
\newcommand{\bfxf}{\text{$\bar{f} \tight{1}{\times}{1} \bar{f}$}}
\newcommand{\GxxG}{\text{$G \tight{1}{\times}{1} G$}}
\newcommand{\LxL}{\text{$\gcs{L} \tight{0}{\boxtimes}{0} \gcs{L}$}}
\begin{document}

\begin{abstract}
We consider the rigid monoidal category of character sheaves on a smooth commutative group scheme $G$ over a finite field
$k$ and expand the scope of the function-sheaf dictionary from connected commutative algebraic groups to this setting.
We find the group of isomorphism classes of character sheaves on $G$ and show that it is an extension of the group
of characters of $G(k)$ by a cohomology group determined by the component group scheme of $G$.
We also classify all morphisms in the category character sheaves on $G$.
As an application, we study character sheaves on Greenberg transforms of locally finite type N\'eron models of algebraic tori over local fields. 
This provides a geometrization of quasicharacters of $p$-adic tori.
\end{abstract}

\maketitle

\section*{Introduction}

As Deligne explained in \cite{deligne:SGA4.5}*{Sommes trig.}, if $G$ is a connected commutative algebraic group
over a finite field $k$, then the trace of Frobenius provides a bijection between the group $G(\Fq)^*$ of $\ell$-adic
characters of $G(\Fq)$ and isomorphism classes of those rank-one $\ell$-adic local systems $\mathcal{E}$ on $G$ for which 
\begin{equation}\label{introbox}
m^* \cs{E} \iso \cs{E} \boxtimes \cs{E},
\end{equation}
where $m : G\times G\to G$ is the multiplication map.
If one wishes to make a category from this class of local systems, one is led to consider morphisms
$\cs{E} \to \cs{E}'$ of sheaves which are compatible with particular choices of \eqref{introbox} for $\cs{E}$ and $\cs{E'}$. 
{\it A priori}, the composition $\cs{E} \to \cs{E}' \to \cs{E}''$ of two such morphisms need not be
compatible with the choices of \eqref{introbox} for $\cs{E}$ and $\cs{E}''$.
 However, for connected $G$ the isomorphism \eqref{introbox} is unique, if it exists, and there is no impediment
 to making the dictionary categorical.

If $G$ is a commutative algebraic group over $\Fq$ which is not connected, however, then the isomorphism \eqref{introbox}
need not be unique. In order to track the choice of isomorphism, consider the category $\bCS(G)$ of pairs $(\cs{E},\mu_\cs{E})$
where $\cs{E}$ is a rank-one local system on $G$ and $\mu_\cs{E} : m^*\cs{E} \to \cs{E}\boxtimes\cs{E}$ is a chosen
isomorphism of local systems on $G\times G$. 
In this case, the trace of Frobenius provides an epimorphism from isomorphism classes of objects in $\bCS(G)$ to characters
of $G(\Fq)$, but the epimomorphism need not be injective; consequently,
every character of $G(\Fq)$ may be geometrized as a pair $(\cs{E},\mu_\cs{E})$ but perhaps not uniquely.
Indeed, it follows from a special case of the main result of this paper that the kernel of the trace of Frobenius
$\bCS(G) \to G(\Fq)^*$ trivial if and only if the  group scheme of connected components of $G$ is cyclic.
The defect in the function-sheaf dictionary for characters of commutative algebraic groups over finite fields may be
addressed with the following observation: if $(\cs{E},\mu_\cs{E})$ and $(\cs{E}',\mu_\cs{E}')$ determine the same
character of $G(\Fq)$ then $\cs{E}\iso \cs{E}'$ as local systems on $G$.

Motivated by an application to quasicharacters of algebraic tori over local fields, in this paper we extend the
function-sheaf dictionary from commutative algebraic groups over finite fields to smooth commutative group schemes $G$ over $\Fq$.
In order to do this, we replace the local system $\cs{E}$ on $G$ with a Weil local system while retaining the extra structure $\mu_\cs{E}$. 
In this way we are led to the category $\CS(G)$ of {\it character sheaves} on $G$ (\S\ref{ssec:category}):
objects in $\CS(G)$ are triples $(\gcs{L}, \mu,\phi)$, where $\gcs{L}$ is a rank-one local system on $\bG := G\times_{\Spec{\Fq}} \Spec{\bFq}$
and $\phi : \Frob{G}^* \gcs{L}\to \gcs{L}$ and $\mu : \bm^* \gcs{L} \iso \gcs{L} \boxtimes \gcs{L}$ are
isomorphisms of sheaves satisfying certain compatibility conditions;
morphisms in $\CS(G)$ are then morphisms of Weil sheaves which are compatible with the extra structure.
This paper establishes the basic properties of category $\CS(G)$, using the group homomorphism 
\[
\TrFrob{G}: \CSiso{G} \to G(\Fq)^*
\]
provided by the trace of Frobenius  to find the relation between character sheaves on $G$ and characters of $G(\Fq)$.
Then we return to our motivating application and use character sheaves to geometrize and categorify quasicharacters algebraic tori over local fields.

We begin our study of category $\CS(G)$ by returning to the case when $G$ is a
connected commutative algebraic group over $\Fq$, revisiting Deligne's function-sheaf dictionary (\S\ref{sec:category}).
We consider character sheaves that arise via base change to $\bFq$ from local systems on $G$ (\S\ref{ssec:descentG}) and
those that appear in a pushforward from a constant sheaf along a discrete isogeny $H \to G$ (\S\ref{ssec:discrete_isogenies}).  
While these constructions make sense even for non-connected $G$, in the connected case we show that
every character sheaf can be described in both of these ways (\S\ref{ssec:connected}).  
We use this fact to prove that $\TrFrob{G} : \CS(G) \to G(\Fq)^*$ is an isomorphism for connected commutative algebraic groups $G$.
We also determine the automorphism groups of character sheaves on such $G$. 
These facts are well known.

Next, we consider character sheaves on \'etale commutative group schemes $G$ over $\Fq$ (\S\ref{sec:etale}).
\'Etale group schemes form a counterpoint to connected algebraic groups, since the component group of any
smooth group scheme is an \'etale group scheme.
Our key tools for understanding the trace of Frobenius in the \'etale case 
are a reinterpretation of $\CS(G)$ in terms of stalks (\S\ref{ssec:stalks}) and 
the Hochschild-Serre spectral sequence (\S\ref{ssec:E}) for $\Weil{} \ltimes \bG$, where $\Weil{} \subset \Gal(\Fq/\Fq)$ is the Weil group for $\Fq$.
We define (\S\ref{ssec:S}) an isomorphism $S_G$ from $\CSiso{G}$ to the second cohomology of the total space of the spectral sequence
\[
E_2^{p,q} \ceq \Hh^p(\Weil{}, \Hh^q(\bG, \EEx)) \Rightarrow \Hh^{p+q}(\Weil{} \ltimes \bG, \EEx).
\]
Paired with the short exact sequence
\[
  0 \to \Hh^0(\Weil{},\Hh^2(\bG,\EEx)) \to \Hh^2(E^\bullet_G) \to \Hh^1(\Weil{},\Hh^1(\bG,\EEx)) \to 0
\]
arising from the spectral sequence, the isomorphism $S_G$ allows us to show (\S\ref{ssec:SandT})
that the group homomorphism $\TrFrob{G}: \CSiso{G} \to G(\Fq)^*$ is surjective with kernel $\Hh^2(\bG,\EEx)^{\Weil{}}$.
The necessity of using Weil local systems on $G$ in the definition of $\CS(G)$ already appears here:
if one were to use local systems on $G$ instead, the group homomorphism $\TrFrob{G}$ would not then be surjective (\S \ref{ssec:bS}). 
Moreover, as examples show (\S \ref{eg:}), the kernel of $\TrFrob{G}$ is non-trivial in general.

Having understood $\CS(G)$ in two extreme cases --
for connected commutative algebraic groups and for \'etale commutative group schemes --
we turn to the case of smooth commutative group schemes (\S\ref{sec:main}) using the component group sequence
\[
0 \to G^0 \to G \to \pi_0(G) \to 0.
\]
Using pullbacks of character sheaves we obtain a diagram
\[
  \begin{tikzcd}[row sep=20, column sep=20]
    0 \rar & \CSiso{\pi_0(G)} \rar \dar{\TrFrob{\pi_0(G)}}
    & \CSiso{G} \rar \dar{\TrFrob{G}} & \CSiso{G^0} \rar \dar{\TrFrob{G^0}} & 0\\
    0 \rar & \pi_0(G)(\Fq)^* \rar & G(\Fq)^* \rar & G^0(\Fq)^* \rar & 0.
  \end{tikzcd}
\]
We show that the rows of this diagram are exact (\S\S\ref{ssec:restriction},\ref{ssec:component}),
so we may apply the snake lemma to prove the main theorem of the paper,
\begin{theorem*}[{Thm. \ref{thm:snake}}]
If $G$ is a smooth commutative group scheme over $\Fq$ then the trace of Frobenius gives a short exact sequence
\[
\begin{tikzcd}
0 \arrow{r} & \Hh^2(\pi_0(\bG),\EEx)^{\Weil{}} \arrow{r} & \CSiso{G} \arrow{r}{\TrFrob{G}} & G(\Fq)^* \arrow{r} & 0.
\end{tikzcd}
\]
\end{theorem*}
If the component group scheme $\pi_0(\bG)$ is cyclic, then the kernel of $\TrFrob{G}$ will be trivial and
each character of $G(\Fq)$ will uniquely determine a character sheaf on $G$, up to isomorphism.  But when
$\pi_0(\bG)$ is large (c.f. Remark \ref{rem:H2}), $G$ will admit \emph{invisible character sheaves} with
trivial trace of Frobenius.

We also illuminate the nature of the category $\CS(G)$ by showing that every morphism in this category
is either an isomorphism or trivial, and by showing
\begin{theorem*}[{Thm. \ref{thm:autornaught}}]
If $G$ is a smooth commutative group scheme over $\Fq$ then
\[
\Aut(\cs{L}) \iso  \Hh^1(\pi_0(\bG), \EEx)^{\Weil{}}
\]
for all quasicharacter sheaves $\cs{L}$ on $G$.
\end{theorem*}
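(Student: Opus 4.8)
The plan is to peel off the three layers of structure in the definition of a morphism in $\QC(G)$ (\S\ref{ssec:category}) one at a time. Fix a quasicharacter sheaf $\qcs{L} = (\gqcs{L},\mu,\phi)$ on $G$; an automorphism of $\qcs{L}$ is an isomorphism $\alpha\colon\gqcs{L}\to\gqcs{L}$ of rank-one local systems on $\bG$ which is compatible with the Weil structure, meaning $\alpha\circ\phi = \phi\circ\Frob{G}^*\alpha$, and with the multiplicative structure, meaning $\mu\circ\bm^*\alpha = (\alpha\boxtimes\alpha)\circ\mu$. First I would describe $\Aut(\gqcs{L})$ as a bare local system. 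Since $G^0_{\bFq}$ is connected and every $G^0_{\bFq}$-torsor over the algebraically closed field $\bFq$ is trivial, each connected component of $\bG$ is isomorphic to $G^0_{\bFq}$, and in particular connected; hence $\gqcs{L}$ restricted to any component is a rank-one local system on a connected scheme, with automorphism group $\EEx$ acting by scalars. Writing $\pi \ceq \pi_0(\bG)(\bFq)$ for the group indexing the components, this gives a canonical identification of $\Aut(\gqcs{L})$ with the group $\operatorname{Map}(\pi,\EEx)$ of set maps under pointwise multiplication; I will write $a_\alpha\colon\pi\to\EEx$ for the map attached to $\alpha$. (One could equally obtain this from the stalk description of \S\ref{ssec:stalks}.)

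Next I would impose the two remaining conditions. The components of $\bGxG$ are indexed by $\pi\times\pi$, and $\bm$ carries the component indexed by $(g,h)$ into the component of $\bG$ indexed by $g+h$, the group law on $\pi_0$; reading the multiplicativity identity $\mu\circ\bm^*\alpha = (\alpha\boxtimes\alpha)\circ\mu$ on this component reduces it, since $\mu$ is an isomorphism, to the scalar equation $a_\alpha(g+h) = a_\alpha(g)\,a_\alpha(h)$. So $\alpha$ is compatible with $\mu$ exactly when $a_\alpha$ is a group homomorphism; the associativity condition in the definition of $\mu$ contributes nothing further, since it is automatically intertwined by any such $\alpha$. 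Likewise $\Frob{G}$ permutes the components of $\bG$ via the natural action of geometric Frobenius on $\pi$, and reading $\alpha\circ\phi = \phi\circ\Frob{G}^*\alpha$ componentwise, using that $\phi$ itself restricts to an isomorphism on each component, reduces it to $a_\alpha(Fg) = a_\alpha(g)$ for all $g$, that is, to $\Weil{}$-invariance of $a_\alpha$. Combining the three steps identifies $\Aut_{\QC(G)}(\qcs{L})$ with $\Hom(\pi,\EEx)^{\Weil{}}$, the group of Frobenius-invariant characters of $\pi$.

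Finally, a homomorphism $a\colon\pi\to\EEx$ is $\Weil{}$-invariant exactly when it annihilates $(F-1)\pi = \{\, Fg - g : g \in \pi \,\}$, equivalently when it factors through the coinvariant quotient $\pi_{\Weil{}} = \pi/(F-1)\pi = \pi_0(\bG)_{\Weil{}}$; this furnishes the natural isomorphism $\Hom(\pi,\EEx)^{\Weil{}} \iso \Hom(\pi_0(\bG)_{\Weil{}},\EEx)$ and completes the argument. Because the computation used only the formal properties of $\mu$ and $\phi$, the resulting group is visibly independent of the choice of $\qcs{L}$, as the statement asserts. I expect the main obstacle to be purely bookkeeping: fixing the indexing of the components of $\bG$ and of $\bGxG$ together with the conventions for how $\bm$ and $\Frob{G}$ act on those indexings, and verifying that the compatibility conditions defining $\QC(G)$ collapse precisely to the two scalar identities above — in particular that nothing is lost in passing from sheaf-level isomorphisms to their componentwise scalars.
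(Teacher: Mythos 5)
Your proof is correct and follows essentially the same route as the paper's: analyze an automorphism through its action on stalks, use compatibility with $\mu$ to force multiplicativity across components and compatibility with $\phi$ to force Frobenius-invariance, and identify the invariant characters of $\pi_0(\bG)$ with characters of the coinvariants $\pi_0(\bG)_{\Weil{}}$. The only cosmetic difference is that you pin down $\Aut(\gqcs{L})$ as the group of set maps $\pi_0(\bG)\to\EEx$ up front, by noting that automorphisms of a rank-one local system on a connected component are scalars, whereas the paper defers this "constant on components" fact to a continuity argument on the espace \'etal\'e; both say the same thing.
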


\subsection*{Application to quasicharacters of \texorpdfstring{$p$}{p}-adic tori and abelian varieties}
As indicated above, our interest in the function-sheaf dictionary for smooth commutative group schemes
over finite fields comes from an application to $p$-adic representation theory,
specifically to quasicharacters (\S\ref{ssec:quasicharacters}) of $p$-adic tori.
However, we found that our method of passing from $p$-adic tori to group schemes over $\Fq$ applies more generally to
any local field $K$ with finite residue field $\Fq$ and to any commutative algebraic group over $K$ that admits a N\'eron model $X$.
This class of algebraic groups over $K$ includes abelian varieties and unipotent $K$-wound groups,
in addition to the algebraic tori we initially considered.

In this paper we show that if $X$ is as above then quasicharacters of $X(K)$ are geometrized and categorified by character sheaves on the
Greenberg transform $\Gr_R(X)$ of the N\'eron model $X$.
Although not locally of finite type, $\Gr_R(X)$ is a commutative group scheme over $\Fq$ and also a projective limit
of smooth commutative group schemes $\Gr^R_n(X)$.
This structure allows us to adapt our work on character sheaves on smooth group schemes over
finite fields to construct (\S\ref{ssec:CS_on_GN}) a category $\QCS(X)$
of {\it quasicharacter sheaves} for $X$, which are certain sheaves on $\Gr_R(X)\times_{\Spec{\Fq}} \Spec{\bFq}$, with extra structure.
The ability to generalize the function-sheaf dictionary to non-connected group schemes plays a crucial role in this application.

Having defined quasicharacter sheaves on N\'eron models of commutative algebraic groups over $K$
and character sheaves on commutative group schemes over $\Fq$, we consider how
these categories are related as $K$ and $\Fq$ vary.  We describe (\S\ref{ssec:basechange}) functors between categories
of quasicharacter sheaves that model restriction and norm homomorphisms of character groups $G(k')^* \to G(k)^*$ and $G(k)^* \to G(k')^*$,
and describe how quasicharacter sheaves behave under Weil restriction (\S\ref{ssec:wrK}).  We also give (\S\ref{ssec:transfer})
a categorical version of a result of Chai and Yu \cite{chai-yu:01a},
relating quasicharacter sheaves for tori over different local fields, even local fields
with different characteristic.

Finally, specializing to the case that $X = T$ is the N\'eron model of an algebraic torus over $K$ (\S\ref{ssec:CS_tori}), 
we give a canonical short exact sequence 
\[
0 \to \Hh^2(X_*(T)_{\mathcal{I}_K},\EEx)^{\Weil{}} \to \QCSiso{T} \to \Hom_\text{}(T(K),\EEx) \to 0,
\]
where $X_*(T)_{\mathcal{I}_K}$ is the group of coinvariants of the cocharacter lattice $X_*(T)$ of the algebraic torus
$T_K$ by the action of the inertia group $\mathcal{I}_K$ of $K$, and where $\Hom_\text{}(T(K),\EEx)$
denotes the group of quasicharacters of $T(K)$.
We further show that automorphism groups in $\QCS(T)$ are given, for every quasicharacter sheaf $\cs{F}$ for $T$, by
\[
\Aut(\cs{F}) \iso (\check{T}_\ell)^{\Weil{K}},
\]
where $\Weil{K}$ is the Weil group for $K$ and $\check{T}_\ell$ is the $\ell$-adic dual torus to $T$.

\subsection*{Relation to other work.}
The main use of the term character sheaf is of course due to Lusztig.
It is applied to certain perverse sheaves on connected reductive algebraic groups over algebraically closed fields in
\cite{lusztig:85a}*{Def.~2.10} and to certain perverse sheaves on certain reductive groups over algebraically closed fields in the series of papers
beginning with \cite{lusztig:disconnected1}.
When commutative, it is not difficult to relate Frobenius-stable character sheaves to our character sheaves
(Remark~\ref{rem:Lusztig}). The new features that we have found pertaining to Weil sheaves
and $\Hh^2(\pi_0(\bG),\EEx)^{\Weil{}}$ do not arise in that context because, for such groups,
Weil sheaves are unnecessary (\S \ref{ssec:alg_groups}) and $\Hh^2(\pi_0(\bG),\EEx)^{\Weil{}}=0$ (Remark \ref{rem:H2}).

For a connected commutative algebraic group over a finite field, it is not uncommon to refer to
local systems satisfying \eqref{introbox}  as character sheaves; see for example, \cite{kamgarpour:09a}*{Intro}.
Our definition of character sheaves on smooth commutative group schemes over finite fields evolved from this notion,
with an eye toward quasicharacters of $p$-adic groups.
The process of creating a category from the group of quasicharacters of a $p$-adic torus informs our choice of the
term quasicharacter sheaf in this paper.

We anticipate that future work on quasicharacter sheaves will make use of   \cite{suzuki-yoshida:12a} and \cite{suzuki:14a},
and will clarify the relation between this project and other attempts to geometrize admissible distributions on $p$-adic groups,
such as \cite{cunningham-kamgarpour:13a} (limited to quasicharacters of $\ZZ_p^\times$) and \cite{aubert-cunningham:02a}
(limited to characters of depth-zero representations). 
We are actively pursuing the question of how to extend the notion of quasicharacter sheaves to provide a geometrization
of admissible distributions on connected reductive algebraic groups over $p$-adic fields, not just commutative ones.

\subsection*{Acknowledgements.}

We thank Pramod Achar, Masoud Kamgarpour and Hadi Salmasian for 
allowing us to hijack much of a Research in Teams meeting at the Banff International Research Station into a discussion of
quasicharacter sheaves.  We also thank them for their kindness, knowledge and invaluable help.
We thank Takashi Suzuki for clarifying the relation between this project and his recent work and for very helpful
observations and suggestions, especially related to our use of the Hochschild-Serre spectral sequence.
We thank Alessandra Bertrapelle and Cristian Gonz\'ales-Avil\'es for disabusing us of a misapprehension concerning
the Greenberg realization functor and for drawing our attention to their result on Weil restriction and the Greenberg transform.
Finally, we thank Joseph Bernstein for very helpful comments.

Finally, we gratefully acknowledge the financial support of the Pacific Institute for the Mathematical Sciences
and the National Science and Engineering Research Council (Canada), as well the hospitality of the
Banff International Research Station during a Research in Teams program.

\tableofcontents

\section{Definitions and recollections} \label{sec:category}

\subsection{Notations}\label{ssec:notation}

Throughout this paper, $G$ is a smooth commutative group scheme
over a finite field $\Fq$ and $m : G \times G\to G$ is its multiplication morphism.

We will make use of the short exact sequence of smooth group schemes defining the component group scheme for $G$:
\[
\begin{tikzcd}
0 \rar & G^0 \arrow{r}{\iota_0} & G \arrow{r}{\pi_0} & \pi_0(G) \rar & 0.
\end{tikzcd}
\]
Then $G^0$ is a connected algebraic group and $\pi_0(G)$ is an \'etale commutative group scheme.
In contrast to the case of algebraic varieties, the component group scheme $\pi_0(G)$ for $G$ need not be finite.

It follows from the smoothness of $G$ that the structure morphism $G \to \Spec{\Fq}$ is locally of finite type, being smooth.
If the structure morphism $G \to \Spec{\Fq}$ is also \'etale, then $G$ is an \'etale group scheme; this does not imply that $\pi_0(G)$ is finite.
An algebraic group over $\Fq$ is a smooth group scheme of finite type, in which case its component group scheme is finite.

We fix an algebraic closure $\bFq$ of $\Fq$ and write $\bG$ for the
smooth commutative group scheme $G \times_{\Spec{\Fq}} \Spec{\bFq}$ over $\bFq$
obtained by base change from $k$. The multiplication morphism for $\bG$ will be denoted by $\bm$.

Let $\Frob{}$ denote the geometric Frobenius element in $\Gal(\bFq/\Fq)$ as
well as the corresponding automorphism of $\Spec{\bFq}$. The Weil group
$\Weil{}\subset \Gal(\bFq/\Fq)$ is the subgroup generated by $\Frob{}$.
Let $\Frob{G} \ceq \id_{G} \times \Frob{}$ be the Frobenius automorphism of $\bG = G \times_{\Spec{\Fq}} \Spec{\bFq}$.

We fix a prime $\ell$, invertible in $\Fq$.
We will work with constructible $\ell$-adic sheaves \citelist{\cite{deligne:80a}*{\S 1.1} \cite{SGA5}*{Expos\'es V, VI}}
on schemes locally of finite type over $\Fq$, employing the standard formalism.
We also make extensive use of the external tensor product of $\ell$-adic sheaves,
defined as follows: if $\mathcal{F}$ and $\mathcal{G}$ are constructible $\ell$-adic
sheaves on schemes $X$ and $Y$ and $p_X : X\times Y\to X$ and $p_Y : X\times Y \to Y$
are the projections, then $\mathcal{F}\boxtimes \mathcal{G} \ceq p_X^* \mathcal{F} \otimes p_Y^*\mathcal{G}$.

For any commutative group $A$, we will write $A^*$ for the dual group $\Hom(A, \EEx)$.

\subsection{Character sheaves on commutative group schemes over finite fields}\label{ssec:category}

\begin{definition}\label{def:CS}
A \emph{character sheaf on $G$} is a triple
$\cs{L}\ceq (\gcs{L},\mu,\phi)$ where:
\begin{enumerate}
\labitem{(CS.1)}{CS.1} $\gcs{L}$ is a rank-one $\ell$-adic local system on $\bG$, by which we mean a constructible
$\ell$-adic sheaf on $\bG$, {\it lisse} on each connected component of $\bG$, whose stalks are one-dimensional $\EE$-vector spaces;
\labitem{(CS.2)}{CS.2} $\mu: \bm^* \gcs{L} \to \LxL$ is an isomorphism of
sheaves on $\bGxG$ such that the following diagram commutes,
  where $m_3 \ceq m\circ (m\tight{1}{\times}{2}\id) = m\circ (\id\tight{2}{\times}{1} m)$;
  \[
  \begin{tikzcd}[row sep=30]
  \bm_3^*\gcs{L} \arrow{rr}{(\bm \tight{1}{\times}{2} \id)^*\mu} \arrow[swap]{d}{(\id \tight{2}{\times}{1} \bm)^*\mu}
    &&  \bm^*\gcs{L} \boxtimes \gcs{L} \dar{\mu \tight{0}{\boxtimes}{1} \id} \\
    \gcs{L} \boxtimes \bm^* \gcs{L} \arrow{rr}{\id \boxtimes \mu}
    &&  \gcs{L} \tight{0}{\boxtimes}{0} \LxL
  \end{tikzcd}
  \]
\labitem{(CS.3)}{CS.3} $\phi : \Frob{G}^* \gcs{L} \to \gcs{L}$ is an
  isomorphism of constructible $\ell$-adic sheaves on $\bG$ compatible with
  $\mu$ in the sense that the following diagram commutes.
  \[
  \begin{tikzcd}[row sep=20]
  \Frob{\GxxG}^* \bm^* \gcs{L} \arrow{rr}{\Frob{\GxxG}^*\mu}
    && \Frob{\GxxG}^*(\LxL)\\
    \arrow[equal]{u} \bm^*  \Frob{G}^* \gcs{L} \arrow[swap]{d}{\bm^* \phi}
    && \Frob{G}^*\gcs{L}\boxtimes \Frob{G}^*\gcs{L} \dar{\phi\boxtimes \phi} \arrow[equal]{u} \\
    \bm^*\gcs{L} \arrow{rr}{\mu}
    && \LxL
  \end{tikzcd}
  \]
\end{enumerate}
Morphisms of character sheaves are defined in the natural way:
\begin{enumerate}
\labitem{(CS.4)}{CS.4} if $\cs{L} = (\gcs{L},\mu,\phi)$ and
  $\cs{L'} = (\gcs{L'},\mu',\phi')$ are character sheaves on $G$ then
  a morphism $\rho : \cs{L} \to \cs{L}'$ is a map $\brho : \gcs{L} \to \gcs{L'}$
  of constructible $\ell$-adic sheaves on $\bG$ such that the following diagrams both commute.
  \[
  \begin{tikzcd}[column sep=40]
  \Frob{G}^* \gcs{L} \rar{\Frob{G}^* \brho} \arrow[swap]{d}{\phi} & \Frob{G}^* \gcs{L'} \dar{\phi'}
  & & \arrow[swap]{d}{\mu} \bm^* \gcs{L} \rar{\bm^* \brho} & \bm^* \gcs{L'} \dar{\mu'} \\
  \gcs{L} \rar{\brho} & \gcs{L'}
  & {} & \LxL \rar{\tight{1}{\rho\boxtimes \rho}{1}} & \gcs{L'} \tight{0}{\boxtimes}{0} \gcs{L'}
  \end{tikzcd}
  \]
\end{enumerate}
 The category of character sheaves on $G$ will be denoted by $\CS(G)$.
 \end{definition}

Category $\CS(G)$ is a rigid monoidal category
\cite{etingof:09a}*{\S1.10} under the tensor product
$\cs{L} \otimes \cs{L'}$ defined by $(\gcs{L}\otimes\gcs{L'}, \mu\otimes\mu', \phi\otimes \phi')$,
with duals given by applying the sheaf hom functor
$\sheafHom(\ - \ ,\EE)$.
This rigid monoidal category structure for $\CS(G)$ gives the set $\CSiso{G}$
of isomorphism classes in $\CS(G)$ the structure of a group.

\begin{remark}
The category of character sheaves on $G$ is not abelian since it is not closed under direct sums; 
thus $\CS(G)$ is not a tensor category in the sense of \cite{deligne:02a}*{0.1}.  
We suspect that requiring that $\mu$ be injective rather than
an isomorphism and dropping the condition that the stalks be one-dimensional would yield an abelian category.
\end{remark}

We will describe the group $\CSiso{G}$ in Theorem~\ref{thm:snake}
and the sets $\Hom(\cs{L},\cs{L}')$ in Theorem~\ref{thm:autornaught}; in this way we provide a complete description of the category $\CS(G)$.
In the meantime, we make an elementary observation about $\Hom(\cs{L},\cs{L}')$.

\begin{lemma}\label{lem:autornaught}
Let $G$ be a smooth commutative group scheme over $\Fq$.
If $\cs{L}$ and $\cs{L}'$ are character sheaves on $G$, then
every $\rho\in \Hom(\cs{L},\cs{L}')$ is either trivial (zero on every stalk)
or an isomorphism. 
\end{lemma}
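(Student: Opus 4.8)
The plan is to exploit the fact that $\bG$ is smooth over $\bFq$, hence a disjoint union of its connected components, each of which is a smooth (in particular reduced and irreducible) variety, together with the hypothesis that $\gqcs{L}$ and $\gqcs{L'}$ are \emph{rank-one} local systems on each such component. First I would reduce to a connected component: a morphism of sheaves on $\bG$ is trivial (resp. an isomorphism) if and only if its restriction to every connected component is, so it suffices to analyze $\brho$ over a single component $C$ of $\bG$. On $C$, both $\gqcs{L}\vert_C$ and $\gqcs{L'}\vert_C$ are lisse of rank one, so $\sheafHom(\gqcs{L}\vert_C,\gqcs{L'}\vert_C)$ is itself a lisse rank-one sheaf on $C$, and $\brho\vert_C$ is a global section of it.

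The key step is then: a global section of a rank-one lisse sheaf on a connected normal (indeed smooth) scheme over an algebraically closed field is either identically zero or nowhere-vanishing. Concretely, $\brho\vert_C$ corresponds, after choosing a geometric point $\bar x$ of $C$, to a $\pi_1(C,\bar x)$-equivariant vector in a one-dimensional representation; the locus where a section of a lisse sheaf vanishes is open and closed (it is the preimage of $0$ under a section of an étale space that is a local homeomorphism, combined with the fact that the zero section of a line bundle / rank-one local system is open), so by connectedness it is all of $C$ or empty. Equivalently, the stalk map $\brho_{\bar x}\colon \gqcs{L}_{\bar x}\to\gqcs{L'}_{\bar x}$ is a linear map between one-dimensional spaces, so it is either $0$ or an isomorphism, and monodromy-equivariance forces the same dichotomy at every point of $C$. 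In the first case $\brho\vert_C$ is trivial; in the second it is an isomorphism of sheaves on $C$.

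Finally I would globalize. Let $Z\subseteq\pi_0(\bG)$ be the (closed) subset of components on which $\brho$ is trivial, and $Z'$ its complement, on which $\brho$ is an isomorphism. Both $\gqcs{L}$ and $\gqcs{L'}$ carry the Frobenius structures $\phi,\phi'$, and $\brho$ commutes with them by the first square in (QC.4); since $\Frob{G}$ permutes the components of $\bG$ transitively along each Frobenius orbit and the Frobenius orbits on $\pi_0(\bG)$ are exactly the fibres of $\pi_0(\bG)\to\pi_0(G)$... more precisely, $\Frob{G}$ acts on $\pi_0(\bG)$ and the subsets $Z,Z'$ are each $\Frob{G}$-stable (because $\phi$ identifies $\Frob{G}^*\gqcs{L}$ with $\gqcs{L}$ and intertwines $\Frob{G}^*\brho$ with $\brho$). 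Moreover $\brho$ also commutes with $\mu,\mu'$ by the second square in (QC.4): pulling back along $\bm$, the component $C_1\cdot C_2$ receives $C_1\times C_2$, and triviality of $\brho$ on either $C_1$ or $C_2$ forces triviality on $C_1\cdot C_2$ via $\mu'\circ\bm^*\brho=(\brho\boxtimes\brho)\circ\mu$, while being an isomorphism on both $C_1$ and $C_2$ forces it on $C_1\cdot C_2$. Thus $Z$ is a subgroup of $\pi_0(\bG)$ (it contains the identity component precisely when $\brho$ vanishes there, and is closed under multiplication by the above), and $Z$ is $\Frob{G}$-stable. If $Z$ contains the identity component then, since any component $C$ satisfies $C\cdot C^{-1}=$ identity component and $Z$ is a subgroup containing it, a short argument with $\mu$ shows $Z$ is everything, so $\brho$ is trivial on all of $\bG$. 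If $Z$ does \emph{not} contain the identity component, then $\brho$ is an isomorphism on the identity component, hence (being a subgroup-complement stable under multiplication) an isomorphism on every component, so $\brho$ is an isomorphism of sheaves on $\bG$; compatibility with $\phi,\phi'$ and $\mu,\mu'$ then makes its inverse a morphism in $\QC(G)$.

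The main obstacle is the bookkeeping in the last paragraph: one must check that the two conditions in (QC.4), especially compatibility with $\mu$, genuinely force the "trivial part" $Z$ to be a $\Frob{G}$-stable subgroup of $\pi_0(\bG)$ rather than an arbitrary $\Frob{G}$-stable subset, and then that a proper subgroup containing the identity component cannot occur — so that $Z$ is either the whole component group or does not contain the identity. Everything else (the reduction to a component, the zero-or-iso dichotomy for rank-one lisse sheaves on a connected scheme) is standard once rank one and smoothness are in hand.
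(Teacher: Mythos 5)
Your proof is correct in substance but takes a longer route than the paper's own argument, and the bookkeeping in your final paragraph is slightly muddled. The paper works directly with stalks and needs no decomposition into connected components: writing $\brho_{\bar g}\colon \gqcs{L}_{\bar g}\to\gqcs{L}'_{\bar g}$ for the induced map on stalks, the second diagram of \ref{QC.4} at the geometric point $(\bar g,\bar e)\in\bGxG$ gives $\mu'_{\bar g,\bar e}\circ\brho_{\bar g}=(\brho_{\bar g}\otimes\brho_{\bar e})\circ\mu_{\bar g,\bar e}$ with $\mu_{\bar g,\bar e}$, $\mu'_{\bar g,\bar e}$ isomorphisms of one-dimensional spaces, so $\brho_{\bar e}=0$ forces $\brho_{\bar g}=0$ for every $\bar g$; conversely, the same diagram at $(\bar g,-\bar g)$ gives $\mu'_{\bar g,-\bar g}\circ\brho_{\bar e}=(\brho_{\bar g}\otimes\brho_{-\bar g})\circ\mu_{\bar g,-\bar g}$, so if $\brho_{\bar e}\ne 0$ then $\brho_{\bar g}\ne 0$, hence an isomorphism by rank one. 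That is the entire proof. Your per-component dichotomy for sections of rank-one lisse sheaves is correct but superfluous here, since the stalk-at-identity argument already treats every geometric point uniformly regardless of which component it lies in. Two smaller points on your last paragraph: first, the locus $Z\subseteq\pi_0(\bG)$ of components on which $\brho$ vanishes is, by your own observation, an \emph{absorbing} subset ($Z\cdot\pi_0(\bG)\subseteq Z$) rather than merely a subgroup, so the clean conclusion is that $Z$ is empty or all of $\pi_0(\bG)$; the phrasing in terms of ``a subgroup not containing the identity'' obscures this. Second, the $\Frob{G}$-stability of $Z$ that you invoke from compatibility with $\phi,\phi'$ plays no role in the argument — compatibility with $\mu,\mu'$ alone suffices. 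Neither issue invalidates your proof, but together they indicate that the machinery you set up is heavier than the lemma requires.
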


\begin{proof}
Suppose $\rho \in \Hom(\cs{L},\cs{L}')$.
We prove the lemma by considering the linear transformations $\brho_{\bg} : \gcs{L}_{\bg} \to \gcs{L}_{\bg}$ at the stalks
above geometric points $\bg$ on $G$ and showing that, either each $\brho_{\bg}$ trivial or each $\brho_{\bg}$ is an isomorphism.
(This idea is expanded upon in Section~\ref{ssec:stalks}.)
Let ${\bar e}$ be the geometric point above the identity $e$ for $G$ determined by our choice of algebraic closure $\bFq$ of $\Fq$.
If $\brho_{\bar e} = 0$ then the second diagram in \ref{CS.4} implies that $\brho_{\bg} = 0$ for all $\bg$, in which case $\rho$ is trivial.
On the other hand, if $\brho_{\bar e}$ is non-trivial then the second diagram in \ref{CS.4} implies that $\brho_{\bg}$ is non-trivial
for all $\bg$ and thus an isomorphism, since the stalks of character sheaves are one-dimensional; in this case $\rho$ is an isomorphism.
\end{proof}

\subsection{Trace of Frobenius}\label{ssec:Frob}

In this section we introduce two tools which will help us understand isomorphism classes of objects in $\CS(G)$:
the map $\CSiso{G} \to G(k)^*$ given by trace of Frobenius and the pullback functor $\CS(G) \to \CS(H)$
associated to a morphism $H \to G$ of smooth group schemes over $\Fq$.

Let $(\gcs{L},\phi)$ be a Weil sheaf on $G$. Every $g\in G(\Fq)$
determines a geometric point $\bg$ fixed by $\Frob{G}$. 
Together with the canonical isomorphism $(\Frob{G}^*\gcs{L})_{\bg} \iso  \gcs{L}_{\Frob{G}(\bg)}$,
the automorphism $\phi$ determines an automorphism $\phi_{\bg}$ of the $\EE$-vector space $\gcs{L}_{\bg}$.
Let $\trace(\phi_{\bg};\gcs{L}_{\bg})$ be the trace of $\phi_{\bg} \in \Aut_{\EE}(\gcs{L}_{\bg})$ and let
$\trFrob{(\gcs{L},\phi)} : G(\Fq)\to \EE$ be the function defined by 
\begin{equation}\label{trWeil}
\trFrob{(\gcs{L},\phi)}(g) \ceq \trace(\phi_{\bg};\gcs{L}_{\bg}),
\end{equation}
commonly called the {\em trace of Frobenius of $(\gcs{L},\phi)$}.
Note that if $(\gcs{L},\phi) \iso (\gcs{L'},\phi')$ as Weil sheaves, 
then $\trFrob{(\gcs{L},\phi)} = \trFrob{(\gcs{L'},\phi')}$ as functions on $G(\Fq)$.

Now suppose $\cs{L} = (\gcs{L},\mu,\phi)$ is a character sheaf on $G$.
Then the isomorphism $\bm^* \gcs{L} \iso \gcs{L} \boxtimes\gcs{L}$ and the diagram of
\ref{CS.3} guarantee
that the function $\trFrob{(\gcs{L},\phi)} : G(\Fq)\to \EEx$ is a group homomorphism, which we will also denote by $\trFrob{\cs{L}}$.
Moreover, this homomorphism depends only on the isomorphism class of $\cs{L}$, so we obtain a map
\begin{align*}
\TrFrob{G} : \CSiso{G} &\to G(\Fq)^*, \\
\cs{L} &\mapsto \trFrob{\cs{L}}.
\end{align*}
Since tensor products on the stalks of $\cs{L}$ induce pointwise multiplication on the trace of Frobenius, $\TrFrob{G}$ is a group homomorphism.  

The next two results follow easily from the definitions.

\begin{lemma}\label{lem:pullback}
  If $f : H\to G$ is a morphism of smooth commutative group schemes over $\Fq$, then
  \begin{align*}
  f^* : \CS(G) &\to \CS(H) \\
  (\gcs{L},\mu,\phi) &\mapsto (\bf^*\gcs{L},(\bfxf)^*\mu,\bf^*F)
  \end{align*}
  defines a monoidal functor dual to $f \colon H(\Fq) \to G(\Fq)$ in the sense that
  \[
  \begin{tikzcd}[row sep=20, column sep=30]
   \CSiso{G} \rar{f^*} \arrow[swap]{d}{\TrFrob{G}} & \CSiso{H} \dar{\TrFrob{H}} \\
   G(\Fq)^* \rar & H(\Fq)^*
  \end{tikzcd}
  \]
  is a commutative diagram of groups.  Moreover, $(f\circ g)^* = g^* \circ f^*$.
\end{lemma}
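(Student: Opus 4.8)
The plan is to check the three conditions of Definition~\ref{def:QC} for the triple $f^*\qcs{L} \ceq (\bf^*\gqcs{L},(\bfxf)^*\mu,\bf^*\phi)$, then verify functoriality and monoidality, and finally chase the trace-of-Frobenius square on stalks. For \ref{QC.1}, pullback of sheaves commutes with restriction to stalks, so $(\bf^*\gqcs{L})_{\bar h} \iso \gqcs{L}_{\bf(\bar h)}$ is one-dimensional; and since each connected component of $\bH$ is carried by $\bf$ into a connected component of $\bG$, the pullback of a sheaf lisse on each component of $\bG$ is lisse on each component of $\bH$ (no smoothness of $f$ is needed). For \ref{QC.2} the essential input is that $f$ is a homomorphism: the square $f\circ m_H = m_G\circ(f\times f)$ commutes, so after base change $\bf\circ\bm_H = \bm_G\circ(\bfxf)$ and hence $\bm_H^*\bf^* \iso (\bfxf)^*\bm_G^*$ canonically; combined with the identity $\bf^*\gqcs{L}\boxtimes\bf^*\gqcs{L} = (\bfxf)^*(\LxL)$, this exhibits $(\bfxf)^*\mu$ as an isomorphism with the source and target required by \ref{QC.2} for the sheaf $\bf^*\gqcs{L}$ on $H$. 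The cocycle diagram of \ref{QC.2} for $f^*\qcs{L}$ is then obtained by applying $(\bf\times\bf\times\bf)^*$ to the cocycle diagram for $\qcs{L}$, using only functoriality of pullback and the evident commuting squares relating the iterated multiplication morphisms $m_3$ for $H$ and $G$. For \ref{QC.3} one uses that $f$ is defined over $\Fq$, so $\bf\circ\Frob{H} = \Frob{G}\circ\bf$, giving $\Frob{H}^*\bf^*\iso\bf^*\Frob{G}^*$ and hence the correct source and target for $\bf^*\phi$; the compatibility diagram of \ref{QC.3} for $f^*\qcs{L}$ is again the $(\bf\times\bf)^*$-pullback of the one for $\qcs{L}$.

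Next I would record that $f^*$ is a functor: a morphism $\rho$ given by $\brho\colon\gqcs{L}\to\gqcs{L'}$ pulls back to the sheaf map $\bf^*\brho$, and the two commuting squares of \ref{QC.4} for $\bf^*\brho$ are the pullbacks of those for $\brho$; functoriality of $\bf^*$ on sheaves then yields $f^*(\id)=\id$ and compatibility with composition. Monoidality is immediate, since $\bf^*$ is a tensor functor on sheaves and the data of $\qcs{L}\otimes\qcs{L'}$ are formed componentwise, and the unit object is sent to the unit object. Finally $(f\circ g)^* = g^*\circ f^*$ follows from the corresponding identity $(\overline{f\circ g})^* \iso \bg^*\circ\bf^*$ for pullback of sheaves, applied also to the $\mu$- and $\phi$-components.

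For the trace-of-Frobenius square, fix $h\in H(\Fq)$ with associated $\Frob{H}$-fixed geometric point $\bar h$; then $\bf(\bar h)$ is the $\Frob{G}$-fixed geometric point attached to $f(h)\in G(\Fq)$. Under the canonical identification $(\bf^*\gqcs{L})_{\bar h}\iso\gqcs{L}_{\bf(\bar h)}$ the automorphism $(\bf^*\phi)_{\bar h}$ corresponds to $\phi_{\bf(\bar h)}$, so $\trace\bigl((\bf^*\phi)_{\bar h};(\bf^*\gqcs{L})_{\bar h}\bigr) = \trace\bigl(\phi_{\bf(\bar h)};\gqcs{L}_{\bf(\bar h)}\bigr)$; that is, $\trFrob{f^*\qcs{L}}(h) = \trFrob{\qcs{L}}(f(h))$. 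Since precomposition with $f\colon H(\Fq)\to G(\Fq)$ is exactly the bottom arrow of the square, this proves its commutativity.

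I do not expect a genuine obstacle here: the whole argument is the naturality of pullback together with the fact that $f$, being a homomorphism defined over $\Fq$, makes the relevant squares of multiplication and Frobenius morphisms commute. The only point requiring care is organizing the several canonical isomorphisms (pullback along composites, $\boxtimes$ versus $\times$, $\Frob{}$ versus $\bf$) so that the large diagrams of \ref{QC.2} and \ref{QC.3} visibly decompose as $(\bf\times\cdots\times\bf)^*$-pullbacks of the corresponding diagrams for $\qcs{L}$ — tedious but routine bookkeeping.
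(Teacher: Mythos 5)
Your proof is correct, and for the bulk of the lemma (verifying \ref{QC.1}--\ref{QC.4}, functoriality, monoidality, and $(f\circ g)^*=g^*\circ f^*$) it follows the same route as the paper: apply the pullback functor $\bf^*$ (and its products) to the defining diagrams of $\qcs{L}$, using that $f$ is a group-scheme homomorphism defined over $\Fq$ to supply the canonical commutation isomorphisms $(\bfxf)^*\bm^*\iso\bm^*\bf^*$ and $\Frob{H}^*\bf^*\iso\bf^*\Frob{G}^*$. The one place you diverge is the commutativity of the trace-of-Frobenius square: the paper cites \cite{laumon:87a}*{1.1.1.2} (with a remark that the ambient finite-type hypothesis there can be relaxed to locally of finite type), whereas you compute directly on stalks, unwinding $(\bf^*\phi)_{\bar h}$ through the canonical identification $(\bf^*\gqcs{L})_{\bar h}\iso\gqcs{L}_{\bf(\bar h)}$ to obtain $\trFrob{f^*\qcs{L}}(h)=\trFrob{\qcs{L}}(f(h))$. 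Your approach is slightly more self-contained and, by working stalkwise, sidesteps any fuss about finiteness hypotheses entirely; the paper's citation is shorter but pushes that bookkeeping into the reference. Both are sound.
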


\begin{proof}
  Let $\cs{L}$ be a character sheaf on $G$. 
  Pullback by $\bf$ takes rank-one local systems to rank-one local systems.
  To see that $(\bfxf)^* \mu$ satisfies \ref{CS.2},
  apply the functor $(\bfxf)^*$
  to \ref{CS.2} for $\cs{L}$ and use the canonical isomorphism
  $(\bfxf)^*(\LxL) \iso \bf^*\gcs{L} \tight{-3}{\boxtimes}{-3} \bf^*\gcs{L}$.
  To show that $f^*\cs{L}$ satisfies
  \ref{CS.3}, apply the same functor to \ref{CS.3} for $\cs{L}$.
  Since $f$ is a morphism of group schemes defined over $\Fq$
  it provides isomorphisms $(\bfxf)^*\Frob{\GxxG}^* \iso \Frob{\GxxG}^* (\bfxf)^*$
  and $(\bfxf)^* \bm^*\iso \bm^* \bf^*$ between functors of constructible sheaves.

  Applying $\bf^*$ and $\bf^* \tight{1}{\times}{1}\bf^*$ to \ref{CS.4} defines the action
  of $f^*$ on morphisms of character sheaves; arguing as above shows that $f^*$ is
  a functor from $\CS(G)$ to $\CS(H)$.  Since tensor products commute with pullback in schemes,
  $f^* : \CS(G) \to \CS(H)$ is a monoidal functor.
  The diagram relating $f^* : \CS(G) \to \CS(H)$, $f^* : G(k)^* \to H(k)^*$ and trace of Frobenius
  commutes by \cite{laumon:87a}*{1.1.1.2}, where the ambient
 finite type hypothesis can be replaced by locally of finite type.

  Finally, the fact that $(f\circ g)^* = g^* \circ f^*$ follows from the analogous
  statements about the pullback functor on $\ell$-adic constructible sheaves.
\end{proof}

If $G_1$ and $G_2$ are smooth commutative group schemes over $\Fq$ then characters of $(G_1 \times G_2)(\Fq)$
all take the form $\chi_1\otimes \chi_2$ for characters $\chi_1$ of $G_1(\Fq)$ and $\chi_2$ of $G_2(\Fq)$. 
The next lemma shows that character sheaves on $G$ enjoy an analogous property.

\begin{lemma}\label{lem:product}
If $G_1$ and $G_2$ are smooth commutative group schemes over $\Fq$ then the following diagram commutes.
\[
\begin{tikzcd}[column sep=60]
\arrow{d}{\TrFrob{G_1} \times \TrFrob{G_2}} \CSiso{G_1}\times \CSiso{G_2} \arrow{r}{(\cs{L}_1,\cs{L}_2)\mapsto \cs{L}_1\boxtimes \cs{L}_2}
& \arrow{d}{\TrFrob{G_1\times G_2}} \CSiso{G_1\times G_2}\\
(G_1)(\Fq)^*\times (G_2)(\Fq)^* \arrow{r}{(\chi_1,\chi_2)\mapsto \chi_1\otimes \chi_2}  & (G_1\times G_2)(\Fq)^*
\end{tikzcd}
\]
Moreover, every character sheaf on $G_1\times G_2$ is isomorphic to $\cs{L}_1\boxtimes\cs{L}_2$
for some character sheaves $\cs{L}_1$ on $G_1$ and $\cs{L}_2$ on $G_2$.
\end{lemma}

\begin{proof}
The only non-trivial part is the last claim, so we will only address that point here.
Set $G \ceq G_1\times G_2$
and write $e_1$ and $e_2$ for the identity elements of $G_1$ and $G_2$.
Define $f : G\to G\times G$ by $f(g_1,g_2) \ceq (g_1,e_2,e_1,g_2)$.
Observe that $m\circ f = \id_G$.
Let $p_1$, $p_2$ be the projection morphisms pictured below:
\[
\begin{tikzcd}
G & \arrow[swap]{l}{p_1} G\times G \arrow{r}{p_2} & G.
\end{tikzcd}
\]
Let $r_1$ and $r_2$ be the projection morphisms pictured below,
with sections $q_1$ and $q_2$, also morphisms of group schemes:
\[
\begin{tikzcd}
G_1  \arrow[swap, bend right]{r}{q_1} &
\arrow[swap, bend right]{l}{r_1} G_1\times G_2 \arrow[bend left]{r}{r_2} &
\arrow[bend left]{l}{q_2} G_2.
\end{tikzcd}
\]
Observe that $p_1\circ f = q_1 \circ r_1$ and $p_2 \circ f = q_2\circ r_2$.
Now, let $\cs{L} \ceq (\gcs{L},\mu,\phi)$ be a character sheaf on $G$
and set $\cs{L}_1 \ceq q_1^* \cs{L}$ and $\cs{L}_2 \ceq q_2^* \cs{L}$.
By Lemma~\ref{lem:pullback}, $\cs{L}_1$ is a character sheaf on $G_1$
and $\cs{L}_2$ is a character sheaf on $G_2$.
We will obtain an isomorphism $\cs{L} \iso  \cs{L}_1\boxtimes \cs{L}_2$.

Applying the functor $f^*$ to the isomorphism $\mu$ yields
\begin{equation}\label{eq:fm}
f^*\mu : f^* m^* \gcs{L} \to f^*(\gcs{L}\boxtimes \gcs{L}) .
\end{equation}
We have already seen that $m\circ f = \id_G$, so $f^* m^* \gcs{L} = \gcs{L}$.  
Since $f^*p_1^*\gcs{L} = r_1^* q_1^* \gcs{L} = r_1^* \gcs{L}_1$ and $f^*p_2^*\gcs{L} = r_2^* q_2^* \gcs{L} = r_2^*\gcs{L}_2$,
we have 
\[
f^*(\gcs{L}\boxtimes \gcs{L})  = f^*p_1^*\gcs{L}\otimes f^* p_2^*\gcs{L} = \gcs{L}_1\boxtimes \gcs{L}_2.
\]
It follows that \eqref{eq:fm} gives an isomorphism $\gcs{L} \to  \gcs{L}_1\boxtimes \gcs{L}_2$.
It is routine to show that this morphism satisfies
\ref{CS.4}, as it applies here,
from which it follows that we have exhibited an isomorphism
$\cs{L} \to \cs{L}_1\boxtimes \cs{L}_2$ of characters sheaves on $G\times G$.
\end{proof}

Using these results on pullbacks and products, we may prove a naturality property of $\TrFrob{G}$.

\begin{proposition}\label{prop:functorialG}
The homomorphism $\TrFrob{G} : \CSiso{G} \to G(\Fq)^*$ defines a natural transformation
between the two contravariant additive functors
\begin{align*}
F_1 : G &\mapsto \CSiso{G} \\
F_2 : G &\mapsto G(\Fq)^*
\end{align*}
from the category of smooth commutative group schemes over $\Fq$ to the category of commutative groups.
\end{proposition}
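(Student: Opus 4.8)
The plan is to reduce the proposition to the two lemmas just proved: Lemma~\ref{lem:pullback} already supplies the naturality square, and Lemma~\ref{lem:product} already supplies the compatibility with products, so what remains is bookkeeping — verifying that $F_1$ and $F_2$ really are contravariant additive functors into the category of abelian groups, and then quoting Lemma~\ref{lem:pullback}. First I would record that the source category — smooth commutative group schemes over $\Fq$ with morphisms of group schemes — is additive: the trivial group scheme is a zero object, $G_1\times_\Fq G_2$ is simultaneously the product and, since every group is commutative, the coproduct (coprojections $g_1\mapsto(g_1,e_2)$, $g_2\mapsto(e_1,g_2)$, universal map $(g_1,g_2)\mapsto\phi_1(g_1)\phi_2(g_2)$), and $\Hom(H,G)$ is an abelian group under pointwise multiplication. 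Thus ``additive functor'' has its usual meaning, and it suffices to check that each of $F_1$, $F_2$ is a contravariant functor and restricts to a group homomorphism on every $\Hom$-group. For $F_2$ this is immediate: $f\colon H\to G$ gives $f^*=(-)\circ f\colon G(\Fq)^*\to H(\Fq)^*$, composition is reversed, and $(f_1\cdot f_2)^*\chi$ sends $h$ to $\chi\bigl(f_1(h)f_2(h)\bigr)=\chi(f_1(h))\,\chi(f_2(h))$, so $(f_1\cdot f_2)^*=f_1^*\cdot f_2^*$. For $F_1$, Lemma~\ref{lem:pullback} provides the contravariant assignment $f\mapsto f^*$ of monoidal functors with $(f\circ g)^*=g^*\circ f^*$, and a monoidal functor induces on isomorphism classes, with respect to the tensor-product group structure on $\QCiso{-}$, a group homomorphism; hence $F_1$ is a contravariant functor into abelian groups.

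The one step with real content is additivity of $F_1$ on morphism groups. Here I would factor the sum $f_1\cdot f_2\colon H\to G$ in $\Hom(H,G)$ as $m_G\circ(f_1\times f_2)\circ\Delta_H$, with $\Delta_H\colon H\to H\times H$ the diagonal and $f_1\times f_2\colon H\times H\to G\times G$, so that by the functoriality clause of Lemma~\ref{lem:pullback} we have $(f_1\cdot f_2)^*=\Delta_H^*\circ(f_1\times f_2)^*\circ m_G^*$, and then trace a quasicharacter sheaf $\qcs{L}$ on $G$ through this composite. Lemma~\ref{lem:product}, applied to $m_G^*\qcs{L}$ on $G\times G$ together with the identities $m_G\circ\iota_j=\id_G$ for the two coprojections $\iota_j\colon G\to G\times G$, identifies $m_G^*\qcs{L}\iso\qcs{L}\boxtimes\qcs{L}$ in $\QC(G\times G)$; since $\boxtimes$ commutes with pullback — clear from the definition of the external tensor product and Lemma~\ref{lem:pullback} — applying $(f_1\times f_2)^*$ yields $f_1^*\qcs{L}\boxtimes f_2^*\qcs{L}$ in $\QC(H\times H)$; and since both projections $H\times H\to H$ become $\id_H$ after precomposition with $\Delta_H$, pullback along $\Delta_H$ converts $\boxtimes$ into $\otimes$, giving $(f_1\cdot f_2)^*\qcs{L}\iso f_1^*\qcs{L}\otimes f_2^*\qcs{L}$. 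Hence $(f_1\cdot f_2)^*=f_1^*\cdot f_2^*$ on $\QCiso{H}$, so $F_1$ is additive. (The additivity of $F_1$ on objects is of course also immediate from Lemma~\ref{lem:product} alone, but the morphism-group statement is the form one wants once and for all.)

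With $F_1$ and $F_2$ established as contravariant additive functors into abelian groups, the proof concludes by invoking the commutative square of Lemma~\ref{lem:pullback}: for each $f\colon H\to G$ it asserts exactly that $\TrFrob{H}\circ f^*=f^*\circ\TrFrob{G}$ as maps $\QCiso{G}\to H(\Fq)^*$, which is the naturality of the family $\TrFrob{G}$ with respect to $F_1$ and $F_2$. I do not expect a genuine obstacle: all the geometric input has been front-loaded into Lemmas~\ref{lem:pullback} and~\ref{lem:product}, and the only points deserving a moment's attention are the (elementary) verification that the source category is additive, so that the statement makes sense, and the confirmation — which Lemma~\ref{lem:product} already supplies — that the identification $m_G^*\qcs{L}\iso\qcs{L}\boxtimes\qcs{L}$ used in the additivity argument is an isomorphism in $\QC(G\times G)$ and not merely one of the underlying local systems.
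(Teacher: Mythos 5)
Your proposal is correct and follows essentially the same route as the paper: the paper also deduces functoriality and naturality from Lemma~\ref{lem:pullback} and additivity from Lemma~\ref{lem:product}, though it does so in three sentences without spelling out the factorization $f_1\cdot f_2 = m_G\circ(f_1\times f_2)\circ\Delta_H$ that you use to verify additivity on Hom-groups directly. Your more explicit verification is a welcome filling-in of a point the paper leaves to the reader, but it is not a different argument.
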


\begin{proof}
The first part of Lemma~\ref{lem:pullback} shows that $F_1$ is a functor,
while the second part shows that Trace of Frobenius is a natural transformation
$T: F_1 \to F_2$. When further combined with Lemma~\ref{lem:product},
we see that $F_1$ is an additive functor and $T: F_1 \to F_2$ is a natural
transformation between additive functors,
concluding the proof of Proposition~\ref{prop:functorialG}.
\end{proof}

\subsection{Descent}\label{ssec:descentG}

In this section we consider a category of sheaves on $G$ obtained by replacing the Weil sheaf $(\gcs{L}, \phi)$
on $\bG$ in the definition of a character sheaf with an $\ell$-adic local system on $G$ itself;
these will play a role in Sections~\ref{ssec:connected} and \ref{ssec:alg_groups}.

\begin{definition}
Let $\bCS(G)$ be the category of pairs $(\cs{E},\mu_\cs{E})$
where $\cs{E}$ is an $\ell$-adic local system on $G$ of rank-one,
equipped with an isomorphism $\mu_\cs{E} : m^* \cs{E} \to \cs{E} \boxtimes \cs{E}$
satisfying the analogue of \ref{CS.2} on $G$;
morphisms in $\bCS(G)$ are defined as in the second part of
\ref{CS.4}. 
\end{definition}

We put a rigid monoidal structure on $\bCS(G)$ in the same way as for $\CS(G)$.

\begin{proposition}\label{prop:BG}
Extension of scalars defines a full and faithful functor
\[
B_G : \bCS(G) \to \CS(G).
\]
\end{proposition}

\begin{proof}
 Suppose $(\cs{E},\mu_\cs{E})$ in an object of $\bCS(G)$.
 Let $b_G : \bG \to G$ be the pullback of $\Spec{\bFq} \to \Spec{\Fq}$ along $G\to \Spec{\Fq}$.
 Set $\gcs{L} = b_G^* \cs{E}$. 
 The functor $b_G^*$ takes local systems on $G$ to local systems on $\bG$.
 The local system $\gcs{L}$ comes equipped with an isomorphism
 $\phi: \Frob{G}^* \gcs{L} \to \gcs{L}$.
 The resulting functor from local systems on $G$ to Weil local systems on $\bG$, given on objects by $\cs{E} \mapsto (\gcs{L},\phi)$, 
 is full and faithful; see \cite{deligne-katz:SGA7.2}*{Expos\'e XIII} and \cite{beilinson-bernstein-deligne:81a}*{Prop. 5.1.2}.
 The isomorphism $\mu \ceq b_{G\times G}^*\mu_\cs{E}$ satisfies \ref{CS.2}
 for $\gcs{L}$ and $\phi$ is compatible with $\mu$ in the sense of \ref{CS.3}.
 This construction defines the functor $B_G : \bCS(G) \to \CS(G)$ given on objects by $(\cs{E},\mu_\cs{E}) \mapsto (\gcs{L},\mu, \phi)$, as defined here. 
 Because morphisms in $\bCS(G)$ and $\CS(G)$ are morphisms of local systems on $G$ and $\bG$,
 respectively, satisfying condition~\ref{CS.4}, this functor is also full and faithful.
\end{proof}

We will say that a character sheaf $\cs{L} \in \CS(G)$ \emph{descends to $G$} if it is isomorphic to some $B_G(\cs{E}, \mu_\cs{E})$.

\begin{remark}\label{rem:descent}
In fact, it is not difficult to recognize character sheaves that descend to $G$: they are exactly those character sheaves
$\cs{L} = (\gcs{L},\mu,\phi)$ for which the action of $W$ on $\gcs{L}$ given by $\phi$ extends to a continuous action
of $\Gal(\bFq/\Fq)$ on $\gcs{L}$; see \cite{deligne-katz:SGA7.2}*{Expos\'e XIII, Rappel 1.1.3} for example. 
\end{remark}

\subsection{Discrete isogenies}\label{ssec:discrete_isogenies}

Here we consider character sheaves on $G$ that are defined by discrete isogenies onto $G$
(\S \ref{ssec:discrete_isogenies}); these will play a role in Section~\ref{ssec:restriction}.

A finite, \'etale, surjective morphism $H\to G$ of smooth group schemes over $\Fq$ for which the action
of $\Gal(\bFq/\Fq)$ on the kernel is trivial is called a {\it discrete isogeny}, inspired by \cite{kamgarpour:09a}*{\S 2.2}.

\begin{proposition}\label{prop:finite}
Let $f: H \to G$ be a discrete isogeny and let $A$ be the kernel of $f$.
Let $V$ be a $1$-dimensional representation of $A$ 
equipped with an isomorphism $V\to V\otimes V$.
Let $\psi : A \to \EEx$ be the character of $V$.
Then $(f_! V_H)_\psi$ (the $\psi$-isotypic component of $f_!V_H$) is an object of $\bCS(G)$.
\end{proposition}

\begin{proof}
Let $f$, $A$, $V$ and $\psi$ be as above and set $\cs{E} = (f_! V_H)_\psi$.
Since $A$ is abelian, $\cs{E}$ is an $\ell$-adic local system on $G$ of rank one.
We must show that $\cs{E}$ comes equipped with an isomorphism $\mu_\cs{E} : m^* \cs{E} \to \cs{E}\boxtimes\cs{E}$.
To do this we use \'etale descent to see that pullback along $f$ gives an equivalence between $\ell$-adic local systems
on $G$ and $A$-equivariant local systems on $H$ {\it cf.\ } \cite{bernstein-luntz:equivariant_sheaves}*{Prop 8.1.1}. 
In particular, $f^*\cs{E}$ is the $A$-equivariant constant sheaf $V$ on $H$ with character $\psi$.
Since $f$ is a morphism of group schemes, the functor $f^*$ defines $\mu_\cs{E} : m^*\cs{E} \to \cs{E}\boxtimes\cs{E}$
from the isomorphism $m^*\psi \iso \psi \boxtimes\psi$ determined by $V\to V\otimes V$. 
\end{proof}

\begin{remark}
Since $V$ is $1$-dimensional, the choice of $V \to V\otimes V$ is exactly the choice of an isomorphism $V\iso \EE$.
\end{remark}

\begin{remark}
A descent argument similar to the one employed in the proof of Lemma~\ref{prop:finite} is used in
\cite{boyarchenko-drinfeld:10a}*{Lemma~1.10}, though in the more restrictive case of connected algebraic groups.
\end{remark}

\subsection{Recollections on character sheaves for connected algebraic groups}\label{ssec:connected}

If the smooth commutative group scheme $G$ is of finite type, then every character sheaf descends to $G$;
we will see that they are not equivalent, in general, however.

\begin{lemma}\label{lem:bounded_connected}
If $G$ is a connected commutative algebraic group over $\Fq$ then 
\[
B_G : \bCS(G) \to \CS(G)
\]
 is an equivalence of categories.
\end{lemma}

\begin{proof}
Choose any $\Fq$-rational point $g$ on $G$ and let $\bg$ be the geometric point on $G$ lying above $g$.
Recall that the \emph{Weil group} of $G$, which we will denote by $\Weil{}(G,\bg)$, is a subgroup of the \'etale
fundamental group defined by the following diagram:
\[
 \begin{tikzcd}
 1 \rar & \ar[equal]{d} \pi_1(\bG, \bg) \rar & \Weil{}(G,\bg) \rar \dar[hook] & \Weil{} \rar \dar[hook] & 1 \\
 1 \rar &  \pi_1(\bG, \bg) \rar & \pi_1(G,\bg) \rar & \Gal(\bFq/\Fq) \rar & 1.
 \end{tikzcd}
\]
The $\Fq$-rational point $g$ under the geometric point $\bg$ determines a splitting
$\Weil{}\to \Weil{}(G,\bg)$ of $\Weil{}(G,\bg)\to \Weil{}$.
  Since $G$ is connected, the geometric point $\bg$ determines
  an equivalence between the category of $\ell$-adic Weil local systems on $G$ and
  $\ell$-adic representations of $\Weil{}(G,\bg)$ \cite{deligne:80a}*{1.1.12}.
  
  Now let $(\gcs{L},\mu,\phi)$ be a character sheaf on $G$
  and let $\lambda : \Weil{}(G, \bg) \to \EEx$ be the character determined by $(\gcs{L},\phi)$.
  Composing with the splitting $\Weil{} \to \Weil{}(G,\bg)$ yields an $\ell$-adic character
  $\lambda_g : \Weil{} \to \EEx$, which is the same as the Trace of Frobenius defined in Section~\ref{ssec:Frob}, for every $\Fq$ rational point $g$ on $G$:
  $
  \lambda_g(\Frob{}) =  \trFrob{\cs{L}}(g).
  $
  On the other hand, we have already seen that $\trFrob{\cs{L}} : G(\Fq) \to \EEx$
  is a group homomorphism. 
  Since $G$ is an algebraic group over $\Fq$, $G(\Fq)$ is finite.
  Therefore $\trFrob{\cs{L}}(g) = \lambda_g(\Frob{})$ is a root of unity
  for every $g\in G(\Fq)$.  Since $\Weil{}$ is generated by
  $\Frob{}$ and $\lambda_g : \Weil{} \to \EEx$ is
  a character, it follows that the image of $\lambda_g$ is a finite group.
  Thus, $\lambda_g$ extends to an $\ell$-adic character of $\Gal(\bFq/\Fq)$,
  which we will also denote $\lambda_g$.
  We may now lift the $\ell$-adic character $\lambda_g : \Gal(\bFq/\Fq) \to \EEx$
  to an $\ell$-adic character $\pi_1(G,\bg) \to \EEx$ using the canonical topological group homomorphism
  $\pi_1(G,\bg) \to \Gal(\bFq/\Fq)$. 
  The $\Fq$ rational point $g$ also
  determines an equivalence between the category of $\ell$-adic
  representations of $\pi_1(G,\bg)$ and $\ell$-adic local systems on $G$. Let
  $\cs{E}$ be a local system on $G$ in the isomorphism class
  determined by this $\ell$-adic character of $\pi_1(G,\bg)$.
  Then $b_G^*\cs{E} \iso \gcs{L}$.
  
  Since $b_{G\times G}^*$ is full and faithful (again, see
\cite{deligne-katz:SGA7.2}*{Expos\'e XIII} or \cite{beilinson-bernstein-deligne:81a}*{Prop. 5.1.2}),
 \[
  b_{G\times G}^* : \Hom(m^*\cs{E},\cs{E}\boxtimes\cs{E}) \to \Hom(\bm^*\gcs{L},\gcs{L}\boxtimes\gcs{L})
 \]
  is a bijection
  (hom taken in the categories on constructible $\ell$-adic sheaves on
  $G\times G$ and $\bG\times \bG$ respectively,
  in which $\ell$-adic local systems sit as full subcategories).
  Let $\mu_\cs{E} : m^*\cs{E} \to \cs{E}\boxtimes\cs{E}$ be the isomorphism matching
  $\mu : \bm^*\gcs{L} \to \gcs{L}\boxtimes\gcs{L}$,
  the latter appearing in the definition of $\cs{L}$.
  Then, as in Section~\ref{ssec:descentG}, $(\cs{E},\mu_\cs{E})$ is an object of $\bCS(G)$
  and $\cs{L} \ceq (\gcs{L},\mu,\phi)$ is isomorphic to $(b_G^*\cs{E},b_{G\times G}^*\mu_\cs{E})$ in $\CS(G)$.
  Thus, the full and faithful functor $B_G : \bCS(G) \to \CS(G)$ from Section~\ref{ssec:descentG}
  is also essentially surjective, hence an equivalence.
\end{proof}

Using this equivalence of categories, we may give a good description of $\CS(G)$ when $G$ is connected and finite type.

\begin{proposition}\label{prop:connected}
 If $G$ is a connected, commutative algebraic group over $\Fq$ then:
 \begin{enumerate}
 \labitem{(1)}{c1} $\TrFrob{G} : \CSiso{G} \to G(\Fq)^*$ is an isomorphism of groups;
 \labitem{(2)}{c2} every character sheaf on $G$ is isomorphic to one defined by a discrete isogeny;
 \labitem{(3)}{c3} $\Aut(\cs{L}) = 1$, for all character sheaves $\cs{L}$ on $G$.
 \end{enumerate}
 \end{proposition}
\begin{proof}
By Lemma~\ref{lem:bounded_connected}, we know that every character sheaf $\cs{L}$ on $\bG$ descends to $G$;
let $\cs{E}$ be an object of $\bCS(G)$ for which $B_G(\cs{E}) \iso \cs{L}$.
 Since the functor $B_G : \bCS(G) \to \CS(G)$ is full and faithful, $\Aut(\cs{L}) = \Aut(\cs{E})$.
From here, Deligne's function-sheaf dictionary for connected commutative algebraic groups over finite fields,
as in \cite{deligne:SGA4.5}*{Sommes trig.} or \cite{laumon:87a}*{1.1.3}, gives us all we need for points \ref{c1} and \ref{c2}, as we briefly recall.

As in the proof of Proposition~\ref{prop:finite}, use \'etale descent to see that pullback by the Lang isogeny $\Lang : G\to G$
defines an equivalence of categories between local systems on $G$ and $G(\Fq)$-equivariant local systems on $G$. 
Under this equivalence, local systems $\cs{E}$ on $G$ arising from objects in $\bCS(G)$ are matched with $G(\Fq)$-equivariant
constant local systems of rank-one on $G$, and therefore with one-dimensional representations of $G(\Fq)$. 
In the same way, pullback along the isogeny $\Lang\times\Lang : G\times G\to G\times G$ matches the extra structure
$\mu_\cs{E} : m^*\cs{E} \to \cs{E}\boxtimes\cs{E}$ with an isomorphism $m^*V \to V\boxtimes V$ of one-dimensional
representations of $G(\Fq)\times G(\Fq)$, which is exactly an isomorphism $V \to V\otimes V$ of one-dimensional representations,
which is exactly the choice of an isomorphism $V\iso \EE$.
We see that $\bCS(G)$ is equivalent to the category of characters of $G(\Fq)$.
Let $\bg$ be a geometric point above $g \in G(\Fq)$.  If $\cs{E}$ matches $\psi : G(\Fq)\to \EEx$ under this equivalence,
a simple calculation on stalks reveals that the action of Frobenius on $\cs{E}_{\bg}$ is multiplication by $\psi(g)^{-1}$.
In other words, for every $\cs{E}$ in $\bCS(G)$, the trace of $\Lang^*\cs{E}$ is $\trFrob{\cs{E}}^{-1}$ as a
representation of $G(\Fq)$, proving parts \ref{c1} and \ref{c2}.

For part \ref{c3}, suppose $\Lang^*\cs{E} = V$ with isomorphism $V \to V\otimes V$.
Observe that the equivalence above establishes a bijection between $\Aut(\cs{E})$ and the group of automorphisms of $\rho : V\to V$ for which 
\[
\begin{tikzcd}
\arrow{d}{} V \arrow{r}{\rho} & V\arrow{d}{}\\
V\otimes V \arrow{r}{\rho\otimes \rho} & V\otimes V
\end{tikzcd}
\]
commutes. 
Since the only such isomorphism $\rho$ is $\id_V$, it follows that $\Aut(\cs{E}) = 1$, completing the proof.
\end{proof}

We have just seen that, for a connected commutative algebraic group $G$ over $\Fq$, the category of character sheaves
on $G$ is equivalent to the category of one-dimensional representations $V$ of $G(\Fq)$ equipped with an isomorphism
$V\iso \EE$, and therefore equivalent to the category of characters $\psi$ of $G(\Fq)$.
We have also just seen that if the character of $\Lang^*\cs{E}$ is $\psi$ then the canonical isomorphism
$m^*\psi \iso \psi \boxtimes \psi$ determines the isomorphism $\mu_\cs{E} : \cs{E} \to \cs{E}\boxtimes\cs{E}$.
This fact leads (back) to a perspective on the function-sheaf dictionary common in the literature in which one considers
one-dimensional local systems $\cs{E}$ on $G$ for which \emph{there exists} an isomorphism
$m^*\cs{E} \iso \cs{E} \boxtimes\cs{E}$ \cite{kamgarpour:09a}*{Intro}.
As a slight variation, one may also consider one-dimensional local systems $\gcs{L}$ on $\bG$ for which \emph{there exists}
an isomorphism $\Frob{G}^*\gcs{L} \iso \gcs{L}$ and an isomorphism $\bm^*\gcs{L} \iso \gcs{L} \boxtimes\gcs{L}$.

Although the category $\CS(G)$ of character sheaves on $G$ specializes to $\bCS(G)$ when $G$ is of finite type
(\S \ref{ssec:alg_groups}), this description is \emph{not} sufficient when extending the dictionary to smooth
commutative group schemes, as we will see already in Section~\ref{sec:etale}.
In particular, for a given $\gcs{L}$ and $\phi$ there may be many $\mu$ so that $(\gcs{L},\mu,\phi)$ is a character sheaf.
For \'etale $G$, Proposition~\ref{prop:etale-iso} shows that $\Hh^2(\bG,\EEx)^{\Weil{}}$ measures the possibilities for $\mu$.
We will see in Section~\ref{sec:main} that $\Hh^2(\pi_0(\bG),\EEx)^{\Weil{}}$ plays an analogous role for general
smooth commutative group schemes $G$.

\section{Character sheaves on \'etale commutative group schemes over finite fields} \label{sec:etale}

In this section we give a complete characterization of the category of character sheaves on \'etale commutative group schemes over finite fields.

\subsection{Stalks of character sheaves}\label{ssec:stalks}

The equivalence $G \mapsto G(\bFq)$,
from the category of \'etale commutative group schemes over $\Fq$ to the category of commutative groups equipped
with a continuous action of $\Gal(\bFq/\Fq)$,
provides the following simple description of character sheaves.
A character sheaf $\cs{L}$ on an \'etale commutative group scheme $G$ over $\Fq$ is:
\begin{enumerate}
 \labitem{(cs.1)}{cs.1} an indexed set of one-dimensional
  $\EE$-vector spaces $\gcs{L}_x$, as $x$ runs over
  $G(\bFq)$;

 \labitem{(cs.2)}{cs.2} an indexed set of isomorphisms
  $\mu_{x,y} : \gcs{L}_{x+y} \xrightarrow{\iso} \gcs{L}_{x} \otimes\gcs{L}_{y}$,
  for all $x,y \in G(\bFq)$, such that
  \[
   \begin{tikzcd}[row sep=40]
    \gcs{L}_{x+y+z} \arrow{rr}{\mu_{x+y,z}} \arrow[swap]{d}{\mu_{x,y+z}}
    && \gcs{L}_{x+y}\otimes\gcs{L}_{z} \dar{\mu_{x,y} \tight{0.5}{\otimes}{1} \id} \\
    \gcs{L}_{x} \otimes\gcs{L}_{y+z} \arrow{rr}{\id \otimes\mu_{y,z}}
    && \gcs{L}_{x} \otimes\gcs{L}_{y} \otimes\gcs{L}_{z}
   \end{tikzcd}
  \]
  commutes, for all $x,y,z\in G(\bFq)$; and
 \labitem{(cs.3)}{cs.3} an indexed set of isomorphisms $\phi_{x} : \gcs{L}_{\Frob{}(x)} \to \gcs{L}_x$
  such that
  \[
   \begin{tikzcd}[row sep=40]
    \gcs{L}_{\Frob{}(x)+\Frob{}(y)} \arrow[swap]{d}{\phi_{x+y}} \arrow{rr}{\mu_{\Frob{}(x),\Frob{}(y)}}
    && \gcs{L}_{\Frob{}(x)}\otimes\gcs{L}_{\Frob{}(y)} \dar{\phi_x \tight{0}{\otimes}{0} \phi_y} \\
    \gcs{L}_{x+y} \arrow{rr}{\mu_{x,y}}
    && \gcs{L}_x \otimes\gcs{L}_y
   \end{tikzcd}
  \]
  commutes, for all $x,y\in G(\bFq)$.
\end{enumerate}
Under this equivalence, a morphism $\rho : \cs{L} \to \cs{L'}$ of character sheaves on $G$ is given by
\begin{enumerate}
 \labitem{(cs.4)}{cs.4} an indexed set $\brho_x : \gcs{L}_x \to \gcs{L'}_x$
  of linear transformations such that
  \[
   \begin{tikzcd}[column sep=40]
    \arrow[swap]{d}{\phi_x} \gcs{L}_{\Frob{}(x)} \rar{\brho_{\Frob{}(x)}} & \gcs{L'}_{\Frob{}(x)} \dar{\phi_x'}
    &\arrow[draw=none]{d}[pos=.4,description]{\text{\normalsize{and}}}
    & \arrow[swap]{d}{\mu_{x,y}} \gcs{L}_{x+y} \rar{\brho_{x+y}} & \gcs{L'}_{x+y} \dar{\mu'_{x,y}} \\
    \gcs{L}_x \rar{\brho_x} & \gcs{L'}_x
    & {} & \gcs{L}_x\otimes\gcs{L}_y \rar{\brho_x\otimes\brho_y} & \gcs{L'}_x \otimes\gcs{L'}_y
   \end{tikzcd}
  \]
  both commute, for all $x, y \in G(\bFq)$.
\end{enumerate}

We will see that $\TrFrob{G} : \CSiso{G} \to G(\Fq)^*$ may not provide complete
information about isomorphism classes of character sheaves on $G$ when $G$ is not a connected algebraic group.
Our main tool for understanding this phenomenon
is a group homomorphism $S_G: \CSiso{G} \to \Hh^2(E^\bullet_G)$ defined in Section~\ref{ssec:S}, for which the next two sections are preparation.

\subsection{A spectral sequence}\label{ssec:E}

Let $G$ be a smooth commutative group scheme over $\Fq$.
The zeroth page of the Hochschild-Serre spectral sequence
is a double complex $E^{\bullet, \bullet}$ defined by
\[
E^{i,j} = C^i(\Weil{}, C^j(G(\bFq), \EEx));
\]
see \cite{vakil:Algebraic_Geometry}*{\S 1.7}, expanding on \cite{weibel:Homological_Algebra}*{Ch 5 and \S 7.5}.
The standard derivative on cochains yields two derivatives,
\begin{align*}
d_G &: E^{i,j} \to E^{i,j+1} \quad \mbox{and} \\
d_{\Weil{}} &: E^{i,j} \to E^{i+1,j};
\end{align*}
we use the first as the derivative $d_0$ on the zeroth page, and the second to induce $d_1$.
Combining them also yields a derivative $d = d_G + (-1)^j d_{\Weil{}}$ on the total complex
\[
E^n_G = \bigoplus_{i+j=n} E^{i,j}.
\]
The machinery of spectral sequences gives us a sequence of pages $E_r^{i,j}$, converging to a page $E_{\infty}^{i,j}$.
We summarize the key properties of this spectral sequence in the following proposition.

\begin{proposition} In the spectral sequence defined above,
\begin{enumerate}
\item the second page is given by $E_2^{i,j} = \Hh^i(\Weil{}, \Hh^j(G(\bFq), \EEx))$,
\item there is an isomorphism $\Hh^n(\Weil{} \ltimes G(\bFq), \EEx) \cong \Hh^n(E_G^\bullet)$, and
\item there is a filtration $\Hh^n(\Weil{} \ltimes G(\bFq), \EEx) = F_n \supset \cdots \supset F_{-1} = 0$, with $F_i / F_{i-1} \cong E_{\infty}^{i, n-i}$.
\end{enumerate}
\end{proposition}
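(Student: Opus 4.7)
The plan is to recognise this as the standard Hochschild--Serre spectral sequence attached to the extension $1 \to G(\bar{k}) \to \Weil{} \ltimes G(\bar{k}) \to \Weil{} \to 1$, obtained as the spectral sequence of a double complex, and to read off the three claims from standard double-complex machinery. I would organise the proof around the two filtrations of $E^{\bullet,\bullet}$ and verify that the total complex computes the cohomology of the semidirect product.

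First, I would run the column-first spectral sequence associated with the double complex $E^{i,j} = C^i(\Weil{}, C^j(G(\bar{k}), \EEx))$. The zeroth page has differential $d_0 = d_G$, so the first page is $E_1^{i,j} = C^i(\Weil{}, \Hh^j(G(\bar{k}),\EEx))$, using the fact that $C^i(\Weil{}, -)$ preserves cohomology in the coefficients because the bar resolution of $\Weil{}$ is a free $\ZZ[\Weil{}]$-module complex (equivalently, $C^i(\Weil{},-) = \Hom(\ZZ[\Weil{}^i], -)$ is exact in its argument). The induced differential $d_1$ is then the coboundary $d_{\Weil{}}$ on $\Weil{}$-cochains valued in $\Hh^j(G(\bar{k}),\EEx)$, which gives (1): $E_2^{i,j} = \Hh^i(\Weil{}, \Hh^j(G(\bar{k}),\EEx))$.

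For (2), I would identify the total complex $E_G^\bullet$ with the inhomogeneous cochain complex of the semidirect product $\Weil{} \ltimes G(\bar{k})$ acting trivially on $\EEx$. Writing an $n$-cochain on $\Weil{}\ltimes G(\bar{k})$ in coordinates $(w_1,g_1,\ldots,w_n,g_n)$ and splitting by bidegree according to how many $w_i$-arguments appear gives a decomposition $C^n(\Weil{}\ltimes G(\bar{k}),\EEx) = \bigoplus_{i+j=n} C^i(\Weil{},C^j(G(\bar{k}),\EEx))$, and a direct check shows the group coboundary matches $d = d_G + (-1)^j d_{\Weil{}}$. This is where I expect the main technical obstacle to lie: one must be careful with sign conventions and with the twisted action of $\Weil{}$ on $C^j(G(\bar{k}),\EEx)$ coming from the semidirect product (induced by $\Frob{}$ acting on $G(\bar{k})$), and verify that the decomposition is compatible with the differentials. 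Granting this, $\Hh^n(\Weil{} \ltimes G(\bar{k}), \EEx) \cong \Hh^n(E_G^\bullet)$.

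Finally, (3) is pure formalism. The column-wise filtration $F^p E_G^n = \bigoplus_{i\geq p, i+j=n} E^{i,j}$ induces a decreasing filtration on $\Hh^n(E_G^\bullet)$, and convergence of the spectral sequence (automatic since only finitely many bidegrees $(i,j)$ contribute for each fixed $n$) gives the subquotients $F^p/F^{p+1} \cong E_\infty^{p,n-p}$. Reindexing $F_i = F^{n-i}$ (or just the corresponding ascending filtration) yields the stated form $\Hh^n = F_n \supset \cdots \supset F_{-1}=0$ with $F_i/F_{i-1} \cong E_\infty^{i,n-i}$, concluding the proposition.
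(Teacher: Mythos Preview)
The paper gives no proof of this proposition: it is stated as the standard package of facts about the Hochschild--Serre spectral sequence, with references to Weibel and Vakil. Your outline for parts (1) and (3) is correct and entirely standard.

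However, your argument for (2) contains a genuine error. The claimed direct-sum decomposition
\[
C^n(\Weil{}\ltimes G(\bar k),\EEx) \;=\; \bigoplus_{i+j=n} C^i(\Weil{},C^j(G(\bar k),\EEx))
\]
is false. As sets, $\Weil{}\ltimes G(\bar k) = \Weil{}\times G(\bar k)$, so an inhomogeneous $n$-cochain on the semidirect product is a function on $\Weil{}^n\times G(\bar k)^n$; in other words $C^n(\Weil{}\ltimes G(\bar k),\EEx) \cong C^n(\Weil{}, C^n(G(\bar k),\EEx))$, a single bidegree-$(n,n)$ term, not a direct sum over $i+j=n$. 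Already in degree~$1$ one sees that $\operatorname{Map}(\Weil{}\times G(\bar k),\EEx)$ is strictly larger than $\operatorname{Map}(\Weil{},\EEx)\oplus\operatorname{Map}(G(\bar k),\EEx)$. There is no way to ``split by how many $w_i$-arguments appear'': every argument of an $n$-cochain has both a $\Weil{}$-component and a $G(\bar k)$-component.

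What is actually true, and what the paper is invoking, is that the bar complex of $\Weil{}\ltimes G(\bar k)$ and the total complex $E_G^\bullet$ are \emph{quasi-isomorphic}, not isomorphic. One standard argument (Weibel, \S6.8 and \S7.5): tensor the bar resolution of $\ZZ$ over $\ZZ[\Weil{}]$ with the bar resolution over $\ZZ[G(\bar k)]$; for a semidirect product this tensor product carries a $\ZZ[\Weil{}\ltimes G(\bar k)]$-module structure and is again a free resolution of $\ZZ$, and applying $\Hom_{\Weil{}\ltimes G(\bar k)}(-,\EEx)$ yields exactly the double complex $E^{\bullet,\bullet}$. Alternatively, one uses the Grothendieck composite-functor spectral sequence for $(-)^{\Weil{}\ltimes G(\bar k)} = ((-)^{G(\bar k)})^{\Weil{}}$. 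Either route gives (2), but neither proceeds via the decomposition you wrote down.
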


Moreover, since $\Weil{} \cong \ZZ$ has cohomological dimension $1$, $E_2^{i,j} = 0$ for $i > 1$ and the
sequence degenerates at the second page: $E_{\infty}^{i,j} = E_2^{i,j}$. We obtain the following corollary:

\begin{corollary}\label{cor:spectral_ses}
There is a short exact sequence
 \[
    0 \to
    \Hh^0(\Weil{},\Hh^2(\bG,\EEx)) \to
    \Hh^2(E^\bullet_G) \to
    \Hh^1(\Weil{},\Hh^1(\bG,\EEx)) \to
    0.
 \]
\end{corollary}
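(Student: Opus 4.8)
The plan is to derive the short exact sequence directly from the degeneration of the Hochschild--Serre spectral sequence at the second page, which is already recorded in the proposition and the remark immediately preceding the corollary. First I would invoke part (3) of the proposition with $n = 2$: this gives a three-step filtration
\[
0 = F_{-1} \subset F_0 \subset F_1 \subset F_2 = \Hh^2(\Weil{}\ltimes G(\bFq),\EEx),
\]
with successive quotients $F_i/F_{i-1} \iso E_\infty^{i,2-i}$ for $i = 0,1,2$. Next I would use the cohomological dimension bound: since $\Weil{}\iso\ZZ$ has cohomological dimension $1$, we have $E_2^{i,j} = 0$ whenever $i \geq 2$, and by the remark the sequence degenerates so $E_\infty^{i,j} = E_2^{i,j}$ for all $i,j$. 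In particular $E_\infty^{2,0} = 0$, so $F_2 = F_1$, and the filtration collapses to the short exact sequence
\[
0 \to F_0 \to F_1 \to F_1/F_0 \to 0,
\]
i.e.
\[
0 \to E_\infty^{0,2} \to \Hh^2(\Weil{}\ltimes G(\bFq),\EEx) \to E_\infty^{1,1} \to 0.
\]

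Then I would rewrite the three terms using the identifications already in hand. Part (1) of the proposition gives $E_2^{0,2} = \Hh^0(\Weil{},\Hh^2(G(\bFq),\EEx))$ and $E_2^{1,1} = \Hh^1(\Weil{},\Hh^1(G(\bFq),\EEx))$, and since the sequence degenerates these equal $E_\infty^{0,2}$ and $E_\infty^{1,1}$ respectively. Part (2) of the proposition gives $\Hh^2(\Weil{}\ltimes G(\bFq),\EEx) \iso \Hh^2(E^\bullet_G)$, the total complex cohomology appearing in the statement. Finally, since $G$ is \'etale, the equivalence $G \mapsto G(\bFq)$ of Section~\ref{ssec:stalks} lets me write $\Hh^q(\bG,\EEx)$ for $\Hh^q(G(\bFq),\EEx)$ throughout, matching the notation of the corollary; more precisely, for \'etale $G$ the \'etale cohomology of $\bG$ with coefficients in the constant sheaf $\EEx$ is just the group cohomology of the discrete group $G(\bFq)$. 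Substituting these identifications yields exactly
\[
0 \to \Hh^0(\Weil{},\Hh^2(\bG,\EEx)) \to \Hh^2(E^\bullet_G) \to \Hh^1(\Weil{},\Hh^1(\bG,\EEx)) \to 0.
\]

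There is essentially no obstacle here: the corollary is a formal consequence of facts stated just above it, and the only points requiring a word of care are (a) checking that the vanishing $E_\infty^{2,0} = 0$ really does force the outer filtration steps to coincide, so that the five-term filtration exact sequence truncates to a three-term one, and (b) being explicit that for an \'etale group scheme $\bG$ the notation $\Hh^q(\bG,\EEx)$ is interpreted as group cohomology $\Hh^q(G(\bFq),\EEx)$, which is the form in which the spectral sequence was set up. Neither of these is more than a remark. If one wanted to be maximally self-contained one could instead cite the low-degree exact sequence of a first-quadrant spectral sequence directly, but deriving it from the filtration as above keeps everything internal to the proposition just proved.
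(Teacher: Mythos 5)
Your proof is correct and is exactly the formal deduction the paper intends: the paper gives no explicit proof of the corollary, relying on the reader to extract it from the proposition's filtration (part 3) together with the remark that $E_2^{i,j}=0$ for $i>1$ and degeneration at $E_2$, which is precisely what you do. Your added care about identifying $\Hh^q(\bG,\EEx)$ with group cohomology of $G(\bFq)$ in the \'etale case is a sensible clarification, consistent with the paper's setup in \S\ref{ssec:E}.
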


This sequence will play a key role in understanding the kernel of $\TrFrob{G}$, as described in the next few sections.
For this application, we need a good understanding of these maps to and from the total complex.

\begin{proposition} \label{prop:ses_desc}
Consider the short exact sequence in Corollary~\ref{cor:spectral_ses}.
\begin{enumerate}
\item Every class $[\alpha\oplus\beta\oplus\gamma] \in \Hh^2(E^\bullet_G)$ is cohomologous to one with $\gamma=0$.
\item The map $\Hh^2(E^\bullet_G) \to \Hh^1(\Weil{},\Hh^1(\bG,\EEx))$ is given by $[\alpha\oplus\beta\oplus 0] \mapsto [\beta]$.
\item Suppose $a \in Z^2(\bG, \EEx)$ represents a class in $\Hh^2(\bG,\EEx)$ fixed by Frobenius.
The map $\Hh^0(\Weil{},\Hh^2(\bG,\EEx)) \to \Hh^2(E^\bullet_G)$ is given by $[a] \mapsto [a \oplus 0 \oplus 0]$.
\end{enumerate}
\end{proposition}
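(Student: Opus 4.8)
The plan is to unwind the construction of the Hochschild–Serre double complex $E^{\bullet,\bullet}$ explicitly in low degrees and read off each map from the standard description of the edge maps of a first-quadrant (here, two-column) spectral sequence. Since $\Weil{}\cong\ZZ$ has cohomological dimension one, the total complex in degree $2$ is $E^2_G = E^{0,2}\oplus E^{1,1}\oplus E^{2,0}$, with $E^{2,0}=C^2(\Weil{},C^0(\bG,\EEx))$; a $2$-cocycle in the total complex is a triple $\alpha\oplus\beta\oplus\gamma$ with $\alpha\in C^0(\Weil{},C^2(\bG,\EEx))$, $\beta\in C^1(\Weil{},C^1(\bG,\EEx))$, $\gamma\in C^2(\Weil{},C^0(\bG,\EEx))$, satisfying the cocycle condition coming from $d = d_G + (-1)^j d_{\Weil{}}$. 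I would write out these equations once so the later verifications are mechanical.

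For part (1), the point is that $E^{2,0}$ contributes nothing to $E_\infty$: the class of $\gamma$ alone in $\Hh^2(\Weil{},\EEx)$ vanishes because $\Weil{}\cong\ZZ$ has cohomological dimension $1$, so $\gamma = d_{\Weil{}}(\delta)$ for some $\delta\in C^1(\Weil{},C^0(\bG,\EEx))$. Viewing $\delta$ also as an element of $C^1(\Weil{},C^1(\bG,\EEx))=E^{1,1}$ via the inclusion of constants $C^0(\bG,\EEx)\hookrightarrow C^1(\bG,\EEx)$ is not quite right; rather one subtracts the total coboundary $d(\delta)$, which lands in $E^{1,1}\oplus E^{2,0}$ and kills the $E^{2,0}$-component $\gamma$ while only modifying $\beta$. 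Thus $[\alpha\oplus\beta\oplus\gamma] = [\alpha\oplus\beta'\oplus 0]$, and replacing $\beta$ by $\beta'$ if necessary we may assume $\gamma=0$ from the start. The only subtlety is bookkeeping the signs in $d=d_G+(-1)^j d_{\Weil{}}$; I expect this to be routine once the degree-$2$ cocycle equations are on paper.

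For part (2), the map $\Hh^2(E^\bullet_G)\to \Hh^1(\Weil{},\Hh^1(\bG,\EEx)) = E_\infty^{1,1} = E_2^{1,1}$ is precisely the edge homomorphism of the spectral sequence associated to the subquotient $E^{1,1}$; by part (1) every class has a representative $\alpha\oplus\beta\oplus 0$, and projecting onto the $E^{1,1}$-component and passing to cohomology sends this to $[\beta]\in E_2^{1,1}$. One checks that the cocycle condition on $\alpha\oplus\beta\oplus 0$ forces $d_{\Weil{}}\beta=0$ in $E^{2,1}$ and $d_G\beta=0$ modulo the image of $d_{\Weil{}}$, so $[\beta]$ is a well-defined class in $E_2^{1,1}=\Hh^1(\Weil{},\Hh^1(\bG,\EEx))$; and a total coboundary $\alpha\oplus\beta\oplus 0 = d(\eta)$ with $\eta\in E^{0,1}\oplus E^{1,0}$ forces the $E^{1,1}$-component of $\beta$ to be of the form $d_G(\text{something}) + d_{\Weil{}}(\text{something})$, hence $[\beta]=0$. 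For part (3), the injection $\Hh^0(\Weil{},\Hh^2(\bG,\EEx)) = E_\infty^{0,2} = E_2^{0,2}\hookrightarrow \Hh^2(E^\bullet_G)$ is the other edge map: a Frobenius-fixed class $[a]$ with $a\in Z^2(\bG,\EEx)$, $d_G a=0$ and $a$ fixed by $\Weil{}$ up to the image of $d_G$, lifts to the total cocycle $a\oplus 0\oplus 0$ (the conditions $d_G a=0$ and $(-1)^2 d_{\Weil{}} a=0$ in $E^{1,2}$ — the latter because $a$ is genuinely $\Frob{}$-fixed, which we may arrange in the class — are exactly what's needed), and its class in $\Hh^2(E^\bullet_G)$ is the image of $[a]$.

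The main obstacle, such as it is, is purely organizational: making sure the sign conventions in the total differential and the identification of $E_2^{1,1}$ with $\Hh^1(\Weil{},\Hh^1(\bG,\EEx))$ are compatible, and being careful that "$a$ fixed by Frobenius" at the level of cohomology can be promoted to a cocycle-level statement $d_{\Weil{}}a = 0$ after adjusting $a$ within its $\Hh^2(\bG,\EEx)$-class (which is possible exactly because we only need the total class $[a\oplus 0\oplus 0]$, and a coboundary correction in the $\bG$-direction changes the representative of $\alpha$ without affecting the outcome). Once the degree-$2$ part of the double complex is written out explicitly, all three assertions are immediate consequences of the definitions of the edge maps together with $\mathrm{cd}(\Weil{})=1$.
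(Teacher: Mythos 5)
Your argument is essentially the paper's own proof: for (1) you use $\operatorname{cd}(\Weil{})=1$, equivalently $\Hh^2(\Weil{},C^0(\bG,\EEx))=0$, to write $\gamma=d_{\Weil{}}\gamma_1$ and subtract the total coboundary $d\gamma_1$; for (2) and (3) you unwind the edge maps of the degenerating two-column spectral sequence, exactly as the paper does (the paper dispatches these with ``tracing through the definition,'' and your write-up supplies the routine details it leaves implicit). The only place your write-up is slightly looser than it needs to be is the parenthetical justification that a Frobenius-fixed cohomology class in $\Hh^2(\bG,\EEx)$ admits a cochain-level Frobenius-fixed representative $a$ (so that $a\oplus 0\oplus 0$ is an honest total cocycle); the cleanest reading of the proposition, consistent with how it is used later, is that $a$ is \emph{already} assumed Frobenius-fixed at the cochain level, in which case no adjustment argument is needed at all.
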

\begin{proof}
Since $\Hh^2(\Weil{}, C^0(\bG, \EEx)) = 0$, we may find a $\gamma_1 \in C^1(\Weil{}, C^0(\bG, \EEx))$ with $d_{\Weil{}}\gamma_1 = \gamma$.
Subtracting $d \gamma_1$ from $\alpha\oplus\beta\oplus\gamma$, we may assume that $\gamma = 0$.

The latter two claims follow from tracing through the definition of latter pages in the spectral sequence.
\end{proof}

\subsection{From character sheaves to the total complex}\label{ssec:S}

Let $G$ be a smooth commutative group scheme over $\Fq$.
In this section we define a group homomorphism
\[
S_G : \CSiso{G} \to \Hh^2(E^\bullet_G).
\] 
Let $\cs{L} = (\gcs{L},\mu,\phi)$ be a character sheaf on $G$.
For each geometric point $x\in \bG$, choose a basis $\{ v_x \}$ for $\gcs{L}_x$.
Through this choice, $\cs{L}$ determines functions
\begin{align*}
a : \bG\times \bG &\to \EEx & b : \bG &\to \EEx \\
\mu_{x,y}(v_{x+y}) &= a(x,y) v_x \otimes v_y & \phi_x(v_{\Frob{G}(x)}) &= b(x) v_x.
\end{align*}
Condition~\ref{CS.2} implies that
\begin{equation}\label{2-cocyle}
a(x+y,z) a(x,y) = a(x,y+z) a(y,z)
\end{equation}
for all $x,y,z\in \bG$, so $a \in Z^2(\bG,\EEx)$.  Similarly, condition~\ref{CS.3} gives
\begin{equation}\label{eq:nohom}
\frac{a(\Frob{G}(x),\Frob{G}(y))}{a(x,y)} =  \frac{b(x+y)}{b(x) b(y)}
\end{equation}
for $x, y \in \bG$.
Let $\alpha \in C^0(\Weil{},C^2(\bG,\EEx)$ be the $0$-cochain corresponding to $a$ and let $\beta\in C^1(\Weil{},C^1(\bG,\EEx)$
be the cocycle such that $\beta(\Frob{})$ is $b$.  We will write both $\alpha$ and $\beta$ additively.
Then
\[
d_G\alpha =0, \qquad\qquad
d_{\Weil{}} \alpha = d_{G} \beta,\qquad\qquad
d_{\Weil{}} \beta =0;
\]
in other words,
\[\alpha\oplus \beta \in Z^2(E^\bullet_G).\]
Although the cocycle $\alpha\oplus \beta$ is not well defined by $\cs{L}$, its class in $\Hh^2(E^\bullet_G)$ is.
To see this, let $\{ v'_x \in \gcs{L}_x^\times \tq x \in \bG\}$ be another choice and let $\alpha'\oplus \beta' \in Z^2(E^\bullet_G)$
be defined by $\cs{L}$ and this choice, as above.
Now let $\delta \in C^0(\Weil{},C^1(\bG,\EEx))$ correspond to the function $d : \bG\to \EEx$ defined by $v'_x = d(x) v_x$.
Chasing through \ref{CS.2} and \ref{CS.3}, we find
\[
\alpha'\oplus\beta' = \alpha\oplus\beta + d\delta,
\]
so the class $[\alpha\oplus\beta]$ of $\alpha\oplus\beta$ in $\Hh^2(E^\bullet_G)$ is independent of the choice made above.
It is also easy to see that $[\alpha\oplus\beta] = [\alpha_0\oplus\beta_0]$ when $\cs{L} \iso \cs{L}_0$,
which concludes the definition of the function
\begin{align*}
S_G : \CSiso{G} &\to \Hh^2(E^\bullet_G)\\
[\cs{L}] &\mapsto [\alpha\oplus \beta].
\end{align*}
It is also easy to see that $[\alpha_1\oplus\beta_1] + [\alpha_2\oplus\beta_2] = [\alpha_3\oplus\beta_3]$
when $\cs{L}_3 = \cs{L}_1\otimes \cs{L}_2$, so $S_G$ is a group homomorphism.

\begin{proposition}\label{prop:SGiso}
If $G$ is \'etale then $S_G:  \CSiso{G} \to \Hh^2(E^\bullet_G)$ is an isomorphism.
\end{proposition}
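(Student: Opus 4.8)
<br>

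The plan is to construct an explicit inverse to $S_G$ and verify the two composites are identities. Given a class in $\Hh^2(E^\bullet_G)$, I would first use part (1) of Proposition~\ref{prop:ses_desc} to represent it by a cocycle $\alpha \oplus \beta$ with the ``$\gamma$'' component zero; unwinding the definition of the total complex, this means $\alpha$ corresponds to a genuine $2$-cocycle $a \in Z^2(\bar G, \EEx)$ and $\beta$ to a function $b : \bar G \to \EEx$, satisfying precisely the relations \eqref{2-cocyle} and \eqref{nohom}. From such a pair $(a,b)$ I would build a quasicharacter sheaf on the \'etale group scheme $G$ using the stalk description \ref{qc.1}--\ref{qc.3}: take $\gqcs{L}_x \ceq \EE$ for every $x \in G(\bFq)$, define $\mu_{x,y}$ to be multiplication by $a(x,y)$ under the canonical identification $\EE \otimes \EE \iso \EE$, and define $\phi_x$ to be multiplication by $b(x)$. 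The cocycle relation \eqref{2-cocyle} is exactly the commutativity of the associativity diagram in \ref{qc.2}, and relation \eqref{nohom} is exactly the commutativity of the compatibility diagram in \ref{qc.3}, so $(\gqcs{L}, \mu, \phi)$ is a well-defined object of $\QC(G)$. This defines a map $\Hh^2(E^\bullet_G) \to \QCiso{G}$ once I check it is independent of the representative $\alpha \oplus \beta$ chosen with $\gamma = 0$.

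Next I would verify that this assignment is well defined on cohomology classes. Two cocycles $\alpha \oplus \beta$ and $\alpha' \oplus \beta'$ both with zero $\gamma$-part that differ by a coboundary $d\delta$ must differ by the coboundary of some $\delta \in C^0(\Weil{}, C^1(\bar G, \EEx))$ — any $C^1(\Weil{}, C^0(\bar G, \EEx))$ contribution would reintroduce a nonzero $\gamma$-component — and such a $\delta$ corresponds exactly to a function $d : \bar G \to \EEx$. Rescaling the bases $v'_x = d(x) v_x$ then exhibits an isomorphism between the two quasicharacter sheaves via \ref{qc.4}, exactly reversing the computation already carried out in \S\ref{ssec:S} showing $\alpha' \oplus \beta' = \alpha \oplus \beta + d\delta$. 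So we get a genuine group homomorphism $\Hh^2(E^\bullet_G) \to \QCiso{G}$, and by construction $S_G$ followed by this map is the identity on $\Hh^2(E^\bullet_G)$.

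It remains to check that the other composite is the identity on $\QCiso{G}$, i.e.\ that every quasicharacter sheaf on an \'etale $G$ is isomorphic to one of the explicit form above with $\gqcs{L}_x = \EE$. This is where the \'etale hypothesis is essential: because $G$ is \'etale, $\bar G$ is literally the discrete set $G(\bFq)$ with its $\Gal(\bFq/\Fq)$-action, a rank-one local system is nothing more than an indexed family of one-dimensional vector spaces $\{\gqcs{L}_x\}$, and each such space admits a basis. Choosing bases $\{v_x\}$ as in \S\ref{ssec:S} produces functions $(a,b)$ and hence a cocycle, and the basis choice simultaneously gives a stalkwise isomorphism to the standard model — the compatibility with $\mu$ and $\phi$ is immediate from how $a$ and $b$ were defined. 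Thus $S_G$ is bijective; being a group homomorphism, it is an isomorphism.

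The main obstacle I anticipate is bookkeeping rather than conceptual: keeping straight the bigraded structure of $E^\bullet_G$ and confirming that, after normalizing $\gamma = 0$, the residual cocycle condition $\alpha \oplus \beta \in Z^2(E^\bullet_G)$ translates precisely into the pair of relations \eqref{2-cocyle} and \eqref{nohom} with no extra constraints, and dually that coboundaries with $\gamma = 0$ are exactly those coming from $d : \bar G \to \EEx$. This is routine because $\Weil{} \cong \ZZ$ makes the $\Weil{}$-cochain complex very short, but it needs care. The remark preceding the proposition — that the statement fails for general smooth $G$ — is a useful sanity check: in the non-\'etale case a rank-one local system need not be trivializable stalk-by-stalk in a way compatible with the lisse structure, so the surjectivity step genuinely uses discreteness of $\bar G$.
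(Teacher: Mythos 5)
Your proposal is correct and follows essentially the same approach as the paper: the paper verifies injectivity and surjectivity of $S_G$ directly, while you package the same computations as the construction of a two-sided inverse, but the core steps (building a quasicharacter sheaf with stalks $\EE$ from a normalized cocycle $\alpha\oplus\beta\oplus 0$, and producing the isomorphism to this standard model from a choice of bases) are identical.

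One small slip in your well-definedness check: a nonzero $\delta_0 \in C^1(\Weil{},C^0(\bar G,\EEx))$ that happens to be a $d_{\Weil{}}$-cocycle does \emph{not} reintroduce a $\gamma$-component, so the stated reason for discarding the $C^1(\Weil{},C^0(\bar G,\EEx))$ piece is off. The correct reason you may take $\delta_0 = 0$ is simpler: since $\bar G$ acts trivially on $\EEx$, the differential $d_G$ vanishes identically on $C^0(\bar G,\EEx)$, so $\delta_0$ contributes nothing to the $E^{0,2}$ and $E^{1,1}$ components of $d\delta$ regardless, and once the $\gamma$-component forces $d_{\Weil{}}\delta_0 = 0$ the term $\delta_0$ drops out entirely. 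The conclusion you draw is right; only the justification needs adjusting.
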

\begin{proof}
Suppose $[\cs{L}] \in \CSiso{G}$ with $S_G([\cs{L}]) = [\alpha \oplus \beta] = 0$,
so that $\alpha \oplus \beta = d\sigma$ for some $\sigma \in C^0(\Weil{},C^1(\bG,\EEx) = C^1(\bG,\EEx)$.
For each $x\in \bG$, define $\sigma_x : \gcs{L}_x \to \EE$ by $\sigma_x : v_x \mapsto \sigma(x)$.
Then the indexed set of isomorphisms $\{ \sigma_x : \gcs{L}_x \to \EE \tq x\in \bG\}$
defines an isomorphism $\cs{L} \to (\EE)_G$.
Since $\cs{L} = 0 \in \CSiso{G}$, $S_G$ is injective.

To see that $S_G$ is surjective, begin with $\alpha\oplus\beta\oplus 0 \in Z^2(E^\bullet_G)$.
Since $d_{\Weil{}} \beta = 0$, we may define $a = \alpha \in C^2(\bG,\EEx)$ and
$b = \beta(\Frob{}) \in C^1(\bG,\EEx)$, which are related to $\alpha$ and $\beta$ as above.
Set $\gcs{L}_x = \EE$, define $\mu_{x,y} : \gcs{L}_{x+y} \to \gcs{L}_x\otimes\gcs{L}_y$
by $\mu_{x,y}(1) = a(x,y) (1\otimes 1)$ and $\phi_x : \gcs{L}_{\Frob{G}(x)} \to \gcs{L}_x$ by $\phi_x(1)= b(x)$.
Then \ref{CS.1} holds since $d_G \alpha =0$ and \ref{CS.2} holds since $d_{\Weil{}}\alpha =d_G \beta$.
Tracing the construction backward, we have defined a character sheaf $\cs{L}$ on $G$ with
$S_G(\cs{L}) = [\alpha\oplus\beta\oplus 0]$, showing that $S_G$ is surjective.
\end{proof}

\subsection{Objects in the \'etale case}\label{ssec:SandT}

In this section we fit the group homomorphisms $\TrFrob{G}$ and $S_G$ into a commutative diagram,
determining the kernel and cokernel of $\TrFrob{G}$ when $G$ is an \'etale commutative group scheme over $\Fq$.
We begin with a simple, general result relating duals, invariants and coinvariants.

\begin{lemma} \label{lem:dual-inv}
Let $X$ be an abelian group equipped with an action of $\Weil{}$.  Then
\begin{align*}
 (X^*)_{\Weil{}} &\to (X^{\Weil{}})^* \\
 [f] &\mapsto f|_{X^{\Weil{}}}
\end{align*}
is an isomorphism.
\end{lemma}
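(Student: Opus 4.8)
The plan is to exhibit an explicit inverse, or at least to check injectivity and surjectivity directly, exploiting the fact that $\Weil{} \cong \ZZ$ is generated by a single element $\Frob{}$. First I would note that the map $[f] \mapsto f|_{X^{\Weil{}}}$ is well defined: if $f, f' \in X^*$ differ by an element of the form $g - \Frob{}\cdot g$ (the generators of the augmentation-type subgroup defining coinvariants for a $\ZZ$-action), then on $X^{\Weil{}}$ we have $(g - \Frob{}\cdot g)(x) = g(x) - g(\Frob{}^{-1} x) = g(x) - g(x) = 0$, so $f$ and $f'$ restrict to the same homomorphism on $X^{\Weil{}}$. Thus the map descends to $(X^*)_{\Weil{}}$.

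For injectivity, suppose $f \in X^*$ restricts to $0$ on $X^{\Weil{}}$; I must show $[f] = 0$ in $(X^*)_{\Weil{}}$, i.e. that $f$ lies in the image of $\id - \Frob{}$ acting on $X^*$. Since $f$ kills $X^{\Weil{}} = \ker(\id - \Frob{} \colon X \to X)$, it factors through $X / X^{\Weil{}}$, and in fact through the image $(\id - \Frob{})X \subseteq X$ — wait, more carefully: $f$ vanishing on $\ker(\id-\Frob{})$ means $f$ factors through $(\id-\Frob{})X$ via the surjection $\id - \Frob{}\colon X \twoheadrightarrow (\id-\Frob{})X$, so there is $h_0 \colon (\id-\Frob{})X \to \EEx$ with $f = h_0 \circ (\id - \Frob{})$. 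Since $\EEx$ is divisible hence injective as a $\ZZ$-module, extend $h_0$ to $h \in X^*$; then $f = (\id - \Frob{})^* h = h - \Frob{}\cdot h$ (up to the sign/variance convention for the $\Weil{}$-action on $X^*$), so $[f] = 0$. This is the step I expect to be the main obstacle, precisely because it requires the injectivity of $\EEx$ as an abelian group to extend $h_0$; I would state that divisibility of $\EEx = \EE^\times$ explicitly as the key input.

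For surjectivity, take any $\lambda \in (X^{\Weil{}})^*$, i.e. $\lambda \colon X^{\Weil{}} \to \EEx$. Again using that $\EEx$ is injective in the category of abelian groups, extend $\lambda$ along the inclusion $X^{\Weil{}} \hookrightarrow X$ to some $f \in X^* = \Hom(X,\EEx)$; then $f|_{X^{\Weil{}}} = \lambda$, so $[f]$ maps to $\lambda$. Combining the three steps — well-definedness, injectivity, surjectivity — gives the claimed isomorphism.

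\textbf{Remark on context.} This lemma will presumably be applied with $X = \Hh^2(\bG, \EEx)$ or $X = \pi_0(\bG)$, turning the kernel $\Hh^2(\bG,\EEx)^{\Weil{}}$ appearing in $\TrFrob{G}$ into a coinvariants form matching the $\Hom(\pi_0(\bG)_{\Weil{}}, \EEx)$ in Theorem~\ref{thm:autornaught}, so I would keep the statement purely group-theoretic and make no assumptions on $X$ beyond it being an abelian group with a $\Weil{}$-action, exactly as stated.
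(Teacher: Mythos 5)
Your proof is correct and is essentially the paper's argument unwound to the element level: the paper applies $\Hom(-,\EEx)$ to the short exact sequence $0 \to X^{\Weil{}} \to X \to (\Frob{}-1)X \to 0$ and invokes $\Ext^1_\ZZ(-,\EEx)=0$, which is exactly your use of the divisibility (injectivity) of $\EEx$ in both the injectivity and surjectivity steps. The one bit of bookkeeping the paper leaves implicit and you make explicit is identifying the image of the dualized map with the augmentation subgroup $(\Frob{}-1)X^*$, which holds up to the harmless automorphism coming from the sign/variance convention you correctly flag.
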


\begin{proof}
We can describe $X^{\Weil{}}$ as the kernel of the map $X \xrightarrow{\Frob{}-1} X$;
let $Y = (\Frob{}-1)X$ be the augmentation ideal.  Dualizing the sequence
\[
 0 \to X^{\Weil{}} \to X \to Y \to 0
\]
yields
\[
 0 \to Y^* \to X^* \to (X^{\Weil{}})^* \to \Ext^1_\ZZ(Y, \EEx).
\]
Since $\Ext^1_\ZZ(-,\EEx)$ vanishes, we get a natural isomorphism from the cokernel of $Y^* \xrightarrow{\Frob{}-1} X^*$ to $(X^{\Weil{}})^*$.
\end{proof}

\begin{proposition}\label{prop:sur_etale}
If $G$ is \'etale, then $\TrFrob{G} : \CSiso{G} \to G(\Fq)^*$ is surjective
and split.
\end{proposition}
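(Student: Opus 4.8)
The plan is to exhibit a splitting of $\TrFrob{G}$ directly, using the isomorphism $S_G$ of Proposition~\ref{prop:SGiso} together with the short exact sequence of Corollary~\ref{cor:spectral_ses} and the concrete description of its maps in Proposition~\ref{prop:ses_desc}. Since $S_G : \QCiso{G} \to \Hh^2(E^\bullet_G)$ is an isomorphism in the \'etale case, it suffices to produce a splitting of a homomorphism $\Hh^2(E^\bullet_G) \to G(\Fq)^*$ obtained by transporting $\TrFrob{G}$ across $S_G$; equivalently, to identify $\TrFrob{G}$ with a natural map out of $\Hh^2(E^\bullet_G)$ and show the latter splits.

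First I would compute, for a quasicharacter sheaf $\qcs{L}=(\gqcs{L},\mu,\phi)$ with associated cocycles $a\in Z^2(\bG,\EEx)$, $b : \bG\to\EEx$ as in \S\ref{ssec:S}, what $\trFrob{\qcs{L}}$ is in these terms: for $g\in G(\Fq)$ (so $\Frob{}(g)=g$), the automorphism $\phi_{\bar g}$ of the one-dimensional space $\gqcs{L}_{\bar g}$ is multiplication by $b(g)$, so $\trFrob{\qcs{L}}(g) = b(g)$. Thus under $S_G$, the trace of Frobenius becomes the map $[\alpha\oplus\beta]\mapsto \big(g\mapsto \beta(\Frob{})(g)\big)$ restricted to $g\in G(\Fq)=\bG^{\Weil{}}$. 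I would then recognize this through the short exact sequence of Corollary~\ref{cor:spectral_ses}: normalizing so that $\gamma=0$ (Proposition~\ref{prop:ses_desc}(1)), the class $[\beta]\in\Hh^1(\Weil{},\Hh^1(\bG,\EEx))=\Hh^1(\Weil{},\bG^*)$ is the image of $[\alpha\oplus\beta]$, and $\Hh^1(\Weil{},\bG^*)=(\bG^*)_{\Weil{}}$ since $\Weil{}\cong\ZZ$. By Lemma~\ref{lem:dual-inv}, $(\bG^*)_{\Weil{}} \iso (\bG^{\Weil{}})^* = G(\Fq)^*$, and one checks this composite isomorphism sends $[\beta]$ to the character $g\mapsto b(g)=\trFrob{\qcs{L}}(g)$. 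Hence $\TrFrob{G}$ is identified, via $S_G$, with the surjection $\Hh^2(E^\bullet_G)\twoheadrightarrow\Hh^1(\Weil{},\Hh^1(\bG,\EEx))$ of Corollary~\ref{cor:spectral_ses} followed by the isomorphism of Lemma~\ref{lem:dual-inv}; in particular $\TrFrob{G}$ is surjective.

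Finally, to see $\TrFrob{G}$ is split, it is enough to split the surjection $\Hh^2(E^\bullet_G)\to\Hh^1(\Weil{},\Hh^1(\bG,\EEx))$. Given a class represented by a cocycle $\beta\in Z^1(\Weil{},C^1(\bG,\EEx))$ with $d_G\beta=0$ (i.e.\ $\beta(\Frob{})=b$ for some $b\in Z^1(\bG,\EEx)=\bG^*$, since $\bG$ is \'etale so $C^j(\bG,-)$ is group cohomology of the discrete abelian group $G(\bFq)$ and $\Hh^1(\bG,\EEx)=\bG^*$), send it to the class $[\,0\oplus\beta\oplus 0\,]\in\Hh^2(E^\bullet_G)$: this is a cocycle because $d_G 0=0$, $d_{\Weil{}}0 = 0 = d_G\beta$, and $d_{\Weil{}}\beta=0$, and its image under the map of Proposition~\ref{prop:ses_desc}(2) is $[\beta]$. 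Linearity in $\beta$ and well-definedness on cohomology classes are immediate from the additive structure of the total complex. Transporting this back along $S_G^{-1}$ gives a set-theoretic group homomorphism section of $\TrFrob{G}$, proving the proposition.

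\textbf{Main obstacle.} The only real work is the bookkeeping in the middle paragraph: matching the coinvariants-to-invariants isomorphism of Lemma~\ref{lem:dual-inv} with the edge map of the Hochschild--Serre spectral sequence and verifying that the resulting composite really does carry $[\beta]$ to the honest character $g\mapsto b(g)$ on $G(\Fq)$, rather than to its inverse or a twist. This is a finite, explicit cocycle computation, but it is where signs and identifications must be pinned down carefully; everything else follows formally from results already established.
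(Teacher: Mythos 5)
Your proof is correct, and it takes a genuinely different route from the paper's. The paper proves surjectivity and splitting by a direct construction: given $\chi \in G(\Fq)^*$, it lifts $\chi$ to a class $[\beta] \in \Hh^1(\Weil{}, \bG^*)$ via Lemma~\ref{lem:dual-inv}, then builds a quasicharacter sheaf explicitly by setting $\gqcs{L}_x = \EE$, $\mu_{x,y}(1) = 1\otimes 1$ (so $a\equiv 1$), and $\phi_x(1) = \beta(\Frob{})(x)$, checking compatibility because $\beta(\Frob{})$ is a homomorphism, and then verifying independence of the choice of $\beta$ and additivity of the section by hand. You instead transport $\TrFrob{G}$ across the isomorphism $S_G$ of Proposition~\ref{prop:SGiso}, identify it with the edge map $\Hh^2(E^\bullet_G) \twoheadrightarrow \Hh^1(\Weil{},\Hh^1(\bG,\EEx))$ composed with Lemma~\ref{lem:dual-inv}, and split that edge map by $[\beta] \mapsto [0\oplus\beta\oplus 0]$. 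At bottom these are the same construction: the paper's explicit quasicharacter sheaf is exactly $S_G^{-1}([0\oplus\beta])$. What your version buys is conceptual clarity and the fact that it essentially establishes the isomorphism of short exact sequences in Proposition~\ref{prop:etale-iso} in the course of the argument, rather than assembling it afterwards; what it costs is the extra verification you correctly flag as the main obstacle, namely that the transported $\TrFrob{G}$ really is the edge map followed by the isomorphism of Lemma~\ref{lem:dual-inv} with no twist or inverse. Your sketch of that verification is right: for $g \in G(\Fq)$, $\Frob{G}(\bg) = \bg$ and $\phi_{\bg}$ is multiplication by $b(g)$, while $\Hh^1(\Weil{},\bG^*) \cong (\bG^*)_{\Weil{}}$ sends $\beta$ to the class of $\beta(\Frob{}) = b$, and Lemma~\ref{lem:dual-inv} then restricts to $G(\Fq)$, giving $b|_{G(\Fq)} = \trFrob{\qcs{L}}$. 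One small point worth making explicit rather than calling immediate: well-definedness of $[\beta] \mapsto [0\oplus\beta\oplus 0]$ requires noting that if $\beta - \beta' = d_{\Weil{}}\delta$ for $\delta \in C^0(\Weil{}, \Hh^1(\bG,\EEx)) = Z^1(\bG,\EEx)$, then $d_G\delta = 0$, so $d\delta = 0 \oplus (\pm(\beta-\beta'))$ in the total complex and the two cocycles are cohomologous.
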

\begin{proof}
Pick $\chi \in G(\Fq)^*$. 
Let $[\beta]\in \Hh^1(\Weil{},\bG^*)$ be the class corresponding to $\chi$ under Lemma~\ref{lem:dual-inv}.
Every representative cocycle $\beta \in Z^1(\Weil{},\bG^*)$ determines a homomorphism
$\beta(\Frob{}) : G(\bFq)\to \EEx$ such that $\beta(\Frob{})\vert_{G(\Fq)} = \chi$.
Set $\gcs{L}_x = \EE$ for every $x\in G(\bFq)$.
Define $\mu_{x,y} : \gcs{L}_{x+y} \to \gcs{L}_x\otimes \gcs{L}_y$ by $\mu_{x,y}(1) = 1 \otimes 1$ and
$\phi_{x} : \gcs{L}_{\Frob{}(x)} \to \gcs{L}_x$ by $\phi_{x}(1) = \beta(\Frob{})(x)$.
Since $\beta(\Frob{}) : G(\bFq) \to \EEx$ is a group homomorphism,
condition \eqref{eq:nohom} is satisfied with $a =1$.
So $\cs{L} = (\gcs{L}, \mu, \phi)$
is a character sheaf with $\trFrob{\cs{L}} = \chi$.
This shows that $\TrFrob{G}$ is surjective.

Now let $\beta' \in Z^1(\Weil{},\bG^*)$ be another representative for $[\beta]$
so $\beta-\beta' = d_{\Weil{}} \delta$ for some $\delta \in C^0(\Weil{},\bG^*)$ defining $d \in \Hom(G(\bFq),\EEx)$.
Let $\cs{L}'$ be the character sheaf on $G$ defined by $\beta'$, as above.
For each $x\in G(\bFq)$, define $\brho_x :\cs{L}_x\to \cs{L}'_x$ by $\brho_x(1) = d(x)$.
The collection of isomorphisms $\{ \brho_x \tq x\in G(\bFq)\}$ satisfies condition~\ref{CS.4},
so it defines a morphism $\rho : \cs{L}\to \cs{L}'$, which is clearly an isomorphism. 
We have now defined a section of $\TrFrob{G}$. 

Now suppose $\chi_1, \chi_2 \in G(\Fq)^*$. Pick cocycles $\beta_1,\beta_2\in Z^1(\Weil{},\bG^*)$ and
construct character sheaves $\cs{L}_1$ and $\cs{L}_2$ on $G$ as above. Since $\cs{L}_1\otimes \cs{L}_2$
is exactly the character sheaf built from the cocycle $\beta_1\cdot \beta_2$, and since
$\trFrob{\cs{L}_1\otimes \cs{L}_2} = \trFrob{\cs{L}_1}\cdot \trFrob{\cs{L}_2}$, the section of $\TrFrob{G}$ defined here is a homomorphism.
\end{proof}

\begin{proposition} \label{prop:etale-iso}
 If $G$ is \'etale then the map $S_G : \CSiso{G}\to \Hh^2(E^\bullet_G)$ induces an isomorphism of split short exact sequences
\[
\begin{tikzcd}
 0 \arrow{r} & \ker \TrFrob{G} \arrow{d} \arrow{r} & \CSiso{G}\arrow{d}{S_G} \arrow{r}{\TrFrob{G}} \arrow{r} & G(\Fq)^* \arrow{d} \arrow{r} & 0\\
  0 \arrow{r} & \Hh^0(\Weil{},\Hh^2(\bG,\EEx)) \arrow{r} & \Hh^2(E^\bullet_G) \arrow{r} & \Hh^1(\Weil{},\Hh^1(\bG,\EEx)) \arrow{r} & 0.
 \end{tikzcd}
 \]
\end{proposition}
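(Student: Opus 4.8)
The plan is to assemble the commutative diagram from the three homomorphisms already in hand and then invoke the five lemma, so that nearly all the work has been done in the preceding subsections. First I would verify that the right-hand square commutes: given a quasicharacter sheaf $\qcs{L}$ with associated cocycle data $\alpha\oplus\beta\in Z^2(E^\bullet_G)$, Proposition~\ref{prop:ses_desc}(2) says the image of $S_G([\qcs{L}])$ in $\Hh^1(\Weil{},\Hh^1(\bar G,\EEx))$ is $[\beta]$ (after arranging $\gamma=0$, which is automatic here since $C^0(\bar G,\EEx)$-valued terms vanish for \'etale $G$). On the other hand, $\TrFrob{G}([\qcs{L}])$ is the character $g\mapsto \trace(\phi_{\bar g};\gqcs{L}_{\bar g}) = b(g)$, and under the choice of basis the function $b=\beta(\Frob{})$ restricted to $G(\Fq)=\bar G^{\Weil{}}$ is exactly the representative used in Lemma~\ref{lem:dual-inv} to identify $(\bar G^*)_{\Weil{}}\iso(\bar G^{\Weil{}})^*$. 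So the square commutes by unwinding the two explicit descriptions. I would also check the left-hand square commutes on the nose: by definition $\ker\TrFrob{G}$ maps into $\Hh^0(\Weil{},\Hh^2(\bar G,\EEx))$ via $S_G$, and Proposition~\ref{prop:ses_desc}(3) identifies that inclusion as $[a]\mapsto[a\oplus 0\oplus 0]$, which matches the restriction of $S_G$ to the kernel.

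Next I would assemble the short exact sequences. The bottom row is Corollary~\ref{cor:spectral_ses}; I should note it is split because $\Hh^1(\Weil{},\Hh^1(\bar G,\EEx))$ sits in a sequence arising from a degenerate spectral sequence, but more directly splitness is inherited from the top row via $S_G$ once we know $S_G$ is an isomorphism. The top row is exact by the definition of $\ker\TrFrob{G}$ together with Proposition~\ref{prop:sur_etale}, which gives surjectivity of $\TrFrob{G}$ and an explicit splitting. With both rows short exact, the middle vertical map $S_G$ an isomorphism (Proposition~\ref{prop:SGiso}), and the two squares commuting, the five lemma forces the outer vertical maps to be isomorphisms as well; transporting the splitting of the top row through $S_G$ shows the bottom row is split compatibly, so we get an isomorphism of \emph{split} short exact sequences as claimed.

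The main obstacle, such as it is, will be the bookkeeping in the commutativity of the right-hand square: one must be careful that the identification of $\chi\in G(\Fq)^*$ with a class in $\Hh^1(\Weil{},\bar G^*)$ via Lemma~\ref{lem:dual-inv} really does correspond, under the basis-dependent construction of $S_G$, to the component $[\beta]$ of $\alpha\oplus\beta$, and that the trace of Frobenius computes $b|_{G(\Fq)}$ rather than something twisted. Since $G$ is \'etale this is genuinely elementary --- $\gqcs{L}$ is just an indexed family of lines, $\phi_x$ is multiplication by $b(x)$, and the fixed points of $\Frob{}$ on $G(\bFq)$ are $G(\Fq)$ --- but it is the one place where two independently-defined maps have to be reconciled. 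Everything else is formal: invoke Propositions~\ref{prop:SGiso}, \ref{prop:sur_etale}, \ref{prop:ses_desc} and the five lemma.

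\begin{proof}
By Proposition~\ref{prop:SGiso}, $S_G$ is an isomorphism, and by Corollary~\ref{cor:spectral_ses} the bottom row is a short exact sequence. The top row is exact by the definition of $\ker\TrFrob{G}$ and the surjectivity of $\TrFrob{G}$ established in Proposition~\ref{prop:sur_etale}; that proposition also provides a splitting of the top row.

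It remains to check that the two squares commute. Fix a quasicharacter sheaf $\qcs{L}=(\gqcs{L},\mu,\phi)$ on $G$, choose bases $\{v_x\}$ of the stalks $\gqcs{L}_x$, and let $a\in Z^2(\bar G,\EEx)$, $b:\bar G\to\EEx$ be the associated functions, so that $S_G([\qcs{L}])=[\alpha\oplus\beta]$ with $\beta(\Frob{})=b$. For the right-hand square, Proposition~\ref{prop:ses_desc}(2) shows that the image of $S_G([\qcs{L}])$ in $\Hh^1(\Weil{},\Hh^1(\bar G,\EEx))$ is $[\beta]$. On the other hand $\TrFrob{G}([\qcs{L}])$ is the character $g\mapsto\trace(\phi_{\bar g};\gqcs{L}_{\bar g})$; since $\gqcs{L}_{\bar g}$ is one-dimensional with basis $v_{\bar g}$ and $\phi_{\bar g}(v_{\bar g})=b(g)\,v_{\bar g}$ for $g\in G(\Fq)$, this character is $b|_{G(\Fq)}=\beta(\Frob{})|_{\bar G^{\Weil{}}}$. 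Under the isomorphism of Lemma~\ref{lem:dual-inv}, the class $[\beta]\in\Hh^1(\Weil{},\bar G^*)=(\bar G^*)_{\Weil{}}$ is sent to $\beta(\Frob{})|_{\bar G^{\Weil{}}}\in(\bar G^{\Weil{}})^* = G(\Fq)^*$. Hence the right-hand square commutes.

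For the left-hand square, suppose $[\qcs{L}]\in\ker\TrFrob{G}$. Then $\TrFrob{G}([\qcs{L}])=b|_{G(\Fq)}$ is trivial, and as in Proposition~\ref{prop:sur_etale} we may alter the choice of bases so that $b$ is identically $1$; then $\beta=0$ and $a\in Z^2(\bar G,\EEx)$ represents a Frobenius-fixed class, with $S_G([\qcs{L}])=[a\oplus 0\oplus 0]$. By Proposition~\ref{prop:ses_desc}(3) this is precisely the image of $[a]\in\Hh^0(\Weil{},\Hh^2(\bar G,\EEx))$ under the left-hand map of the bottom row, so the left-hand square commutes.

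Both rows are now short exact, the squares commute, and the middle vertical map $S_G$ is an isomorphism. By the five lemma, the induced maps $\ker\TrFrob{G}\to\Hh^0(\Weil{},\Hh^2(\bar G,\EEx))$ and $G(\Fq)^*\to\Hh^1(\Weil{},\Hh^1(\bar G,\EEx))$ are isomorphisms. Transporting the splitting of the top row through these isomorphisms shows that the bottom row is split as well, so $S_G$ induces an isomorphism of split short exact sequences.
\end{proof}
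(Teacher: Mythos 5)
Your proof takes essentially the same approach as the paper, whose own proof is a one-line citation of Propositions~\ref{prop:ses_desc}, \ref{prop:SGiso} and \ref{prop:sur_etale}; you have simply unwound that citation into an explicit diagram chase and a five-lemma application, and the substance is correct. One small caveat: the direct verification of the left-hand square appeals to ``as in Proposition~\ref{prop:sur_etale} we may alter the choice of bases so that $b$ is identically $1$,'' but that proposition does not contain such a claim, and establishing it directly amounts to re-proving that $S_G(\ker \TrFrob{G})$ lies in $\Hh^0(\Weil{},\Hh^2(\bar G,\EEx))$ --- which is better obtained as a consequence of the right-hand square commuting together with exactness of the bottom row, making the direct left-square check redundant.
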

\begin{proof}
This result follows from Propositions~\ref{prop:ses_desc}, \ref{prop:SGiso} and \ref{prop:sur_etale}.
\end{proof}

\begin{definition}
We call a character sheaf $\cs{L}$ on $G$ \emph{invisible} if it is nontrivial and $\TrFrob{G}(\cs{L}) = 1$.
\end{definition}

The proposition gives a method for determining whether a given $G$ admits invisible character sheaves.

\begin{remark} \label{rem:H2}
Recall the K\"unneth formula in group cohomology \cite{brown:CohomologyGrps}*{Prop. I.0.8}:
if $A$ and $A'$ are groups and $M$ and $M'$ are abelian groups with $M$ $\ZZ$-free, then
\[
\Hh^n(A \times A', M \otimes M') \cong \bigoplus_{i+j=n} \Hh^i(A, M) \otimes \Hh^j(A', M') \ \ \oplus
\bigoplus_{i+j=n+1} \Tor_1^{\ZZ}\left(\Hh^i(A, M), \Hh^j(A', M')\right).
\]
Now suppose $\bG = \ZZ^r \times \prod_{i = 1}^m \ZZ/N_i\ZZ$ is an arbitrary finitely generated abelian group,
with $N_i \mid N_{i+1}$.  Then the K\"unneth formula implies that
\begin{equation} \label{eq:H2comp}
\Hh^2(\bG, \EEx) \cong \left(\EEx\right)^{r(r-1)/2} \times \prod_{i=1}^m (\ZZ/N_i\ZZ)^{m+r-i}.
\end{equation}
We see that $\Hh^2(\bG, \EEx)$ is trivial if and only if $\bG$ is cyclic.  Of course, $\Hh^0(\Weil{}, \Hh^2(\bG, \EEx))$
may or may not be trivial, even when $\Hh^2(\bG, \EEx)$ is non-trivial.
\end{remark}

\begin{example}\label{eg:H2}
Consider the simplest non-trivial case, where $\bG = \{1, i, j, k\} \cong \ZZ/2\ZZ \times \ZZ/2\ZZ.$  Using \eqref{eq:H2comp},
we have $\Hh^2(\bG, \EEx) \cong \ZZ/2\ZZ$, on which $\Weil{}$ must act trivially, regardless of its action on $\bG$ itself.
The non-trivial element corresponds to the extension
\begin{equation} \label{eq:Qdef}
1 \to \EEx \to Q \to \bG \to 1,
\end{equation}
where $Q = \{c + c_ii + c_jj + c_kk \tq c,c_i,c_j,c_k \in \EE \mbox{ with exactly one nonzero}\}$ is a subgroup of the quaternion algebra over $\EE$.
Let $a$ be a $2$-cocycle corresponding to this extension, with values in $\{\pm 1\}$. When $\Frob{G}$ acts trivially on $\bG$,
any homomorphism $b : \bG \to \EEx$ will satisfy \eqref{eq:nohom}, and the corresponding
$\alpha \oplus \beta$ are non-cohomologous in $\Hh^2(E_G^\bullet)$.
When $\Frob{G}$ exchanges $i$ and $j$, then we may take $b(1) = 1, b(i) = -1$ and $b(j) = b(k) = \pm 1$, up to coboundaries.
Finally, when $\Frob{G}$ cycles $i$, $j$ and $k$, any homomorphism $b : \bG \to \EEx$ will satisfy \eqref{eq:nohom},
but now the corresponding $\alpha \oplus \beta$ are all cohomologous in $\Hh^2(E_G^\bullet)$.  In each case, we may produce
an explicit character sheaf from the listed $a$ and $b$.

Note that these character sheaves arise from discrete isogenies, as in Section \ref{ssec:discrete_isogenies}.
Let $\bH$ be the quaternion group of order $8$: the subgroup of $Q$ with $c, c_i, c_j, c_k \in \{\pm 1\}$.
The sequence \eqref{eq:Qdef} is the pushforward of
\[
1 \to \{\pm 1\} \to \bH \to \bG \to 1
\]
along the inclusion $\{\pm 1\} \hookrightarrow \EEx$.  Note that these character sheaves arise from a non-commutative cover of $\bG$,
justifying the inclusion of such covers in the definition of a discrete isogeny.
\end{example}

\subsection{On the necessity of working with Weil sheaves}\label{ssec:bS}

In this section we justify the appearance of Weil sheaves in Definition~\ref{def:CS}.

\begin{proposition}\label{prop:bounded-etale}
Let $G$ be a commutative \'etale group scheme over $\Fq$.
Then the image of $\bCS(G)$ under $\TrFrob{G} : \CS(G) \to G(\Fq)^*$ is $\Hom(G(\Fq),\ZEx)$.
\end{proposition}

\begin{proof}
Objects in $\bCS(G)$ may be described by a small modification to the technique used in Sections~\ref{ssec:E} and \ref{ssec:S}. 
Set $F^{i,j} \ceq C^i_{\text{cts}} (\Gal(\bFq/\Fq), C^j(G(\bFq), \EEx))$.
Then the results of Section~\ref{ssec:E} adapt to give a short exact sequence in continuous Galois cohomology
 \[
    0 \to
    \Hh^0(\Fq,\Hh^2(\bG,\EEx)) \to
    \Hh^2(F^\bullet_G) \to
    \Hh^1(\Fq,\Hh^1(\bG,\EEx)) \to
    0,
 \]
for which the maps are given by the analogues of Proposition~\ref{prop:ses_desc}.
Moreover, using \cite{deligne-katz:SGA7.2}*{Expos\'e XIII, Rappel 1.1.3} we see that Proposition~\ref{prop:SGiso}
adapts to provide an isomorphism $\bCSiso{G}\to \Hh^2(F^\bullet_G)$ compatible with $\bCS(G) \to \CS(G)$ and with the
the canonical map of exact sequences 
 \[
\begin{tikzcd}
    0 \arrow{r} &
    \Hh^0(\Fq,\Hh^2(\bG,\EEx)) \arrow{r} \arrow{d} &
    \Hh^2(F^\bullet_G) \arrow{r} \arrow{d} &
    \Hh^1(\Fq,\Hh^1(\bG,\EEx)) \arrow{r} \arrow{d} &
    0
\\
    0 \arrow{r} & 
    \Hh^0(\Weil{},\Hh^2(\bG,\EEx)) \arrow{r} & 
    \Hh^2(E^\bullet_G) \arrow{r} & 
    \Hh^1(\Weil{},\Hh^1(\bG,\EEx)) \arrow{r} &
    0.
\end{tikzcd}
 \]
In this way, Proposition~\ref{prop:bounded-etale} is now reduced to the claim
\[
\Hh^1(\Fq,\Hh^1(\bG,\EEx)) = \Hom(G(\Fq),\ZEx).
\]
To see that, one may argue as follows. 
Pick $i\in \pi_0(G)$ and let $G^i \hookrightarrow G$ be the corresponding connected component. 
Pick a geometric point $x$ on $G^i$ and observe that since $G^i$ is connected as a $\Fq$-scheme,
$G^i(\bFq)$ is canonically identified with the $\Gal(\bFq/\Fq)$-orbit of $x$. 
We remark that while $G^i$ is defined over $\Fq$, the set $G^i(\Fq)$ is non-empty only when $G^i(\bFq) = \{ x\}$.
Since $\Hh^1(\bG,\EEx) = \Hom(\bG,\EEx)$, evaluation $\chi \mapsto \chi(x)$ defines $\Hh^1(\Fq,\Hh^1(\bG,\EEx)) \to \Hh^1(\Fq,\EEx)$. 
By continuity, $\Hh^1(\Fq,\EEx) = \Hh^1(\Fq,\ZEx)$.
Letting $i$ range over $\pi_0(G)$ we conclude that $\Hh^1(\Fq,\Hh^1(\bG,\EEx)) = \Hh^1(\Fq,\Hh^1(\bG,\ZEx))$.
When adapted to abelian groups with continuous action of $\Gal(\bFq/\Fq)$, the strategy of the proof of
Lemma~\ref{lem:dual-inv} gives $\Hh^1(\Fq,\Hh^1(\bG,\ZEx)) =  \Hom(G(\Fq),\ZEx)$, concluding the proof.
\end{proof}

Proposition~\ref{prop:bounded-etale} reveals the necessity of working with Weil sheaves in Definition~\ref{def:CS}:
one cannot geometrize all characters of $G(\Fq)$ using local systems on $G$, for general smooth commutative groups schemes $G$.
Proposition~\ref{prop:bounded-etale} is extended to all smooth commutative groups schemes in Section~\ref{ssec:revisited}.

\begin{example}\label{eg:}
Consider the case when $G$ is the \'etale group scheme $\ZZ$ over $\Fq$ with $\Frob{G}$ trivial.
If $\chi : \ZZ \to \EEx$ is the character of $G(\Fq)$ determined by $\chi(1) = \ell$
and if $\cs{L}$ is a character sheaf on $G$ in the isomorphism class
corresponding to $\chi$ under Proposition~\ref{prop:sur_etale},
then $\cs{L}$ does not descend to $G$, since the image of $\chi$ is not
bounded.
If $\chi' : \ZZ \to \EEx$ is the character of $G(\Fq)$ determined by $\chi'(1) = 1+\ell$ and if $\cs{L}'$
corresponds to $\chi'$ under Proposition~\ref{prop:sur_etale},
then $\cs{L}'$ does descend to $G$, since the image of $\chi'$ is
bounded.
However, $\cs{L}'$ is not defined by a discrete isogeny (\S \ref{ssec:discrete_isogenies}). 
If $\chi'' : \ZZ \to \EEx$ is the character of $G(\Fq)$ determined by $\chi''(1) = \zeta$, a root of unity in
$\EEx$, and if $\cs{L}''$ corresponds to $\chi''$ under Proposition~\ref{prop:sur_etale},
then $\cs{L}''$ is defined by a discrete isogeny. 
\end{example}

\subsection{Morphisms in the \'etale case}\label{ssec:mor-etale}
 
A complete understanding of the morphisms in $\CS(G)$ also requires a description of the
automorphisms of an arbitrary character sheaf $\cs{L}$.

\begin{proposition}\label{prop:autornaught_etale}
Let $G$ be an \'etale commutative group scheme over $\Fq$.
If $\cs{L}$ and $\cs{L}'$ are character sheaves on $G$ then
every $\rho\in \Hom(\cs{L},\cs{L}')$ is either trivial or an isomorphism. Moreover, the trace map induces an isomorphism of groups
\[
\Aut(\cs{L}) \to \Hom(G(\bFq)_{\Weil{}}, \EEx).
\]
\end{proposition}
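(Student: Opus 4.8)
The plan is to prove the two assertions in order. The first statement—that every morphism is trivial or an isomorphism—is already contained in Lemma~\ref{lem:autornaught}, so in the \'etale setting one can either invoke that lemma directly or give the cleaner stalk-theoretic argument using \ref{qc.4}: if $\brho_e = 0$ then the second square in \ref{qc.4} (with $y$ chosen so that $x+y$ ranges over $G(\bFq)$, or equivalently taking $x = g$, $y = -g + g' $ suitably) forces $\brho_{g} = 0$ for all $g$, and otherwise each $\brho_g$ is a nonzero map between one-dimensional spaces, hence an isomorphism. This reduces the study of $\Aut(\qcs{L})$ to collections of nonzero scalars.

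For the isomorphism $\Aut(\qcs{L}) \to \Hom(G(\bFq)_{\Weil{}}, \EEx)$, the key step is to translate an automorphism into scalar data using the stalk description of Section~\ref{ssec:stalks}. Fix bases $\{v_x\}$ for $\gqcs{L}_x$ as in Section~\ref{ssec:S}. An automorphism $\rho$ is then a function $c : G(\bFq) \to \EEx$ with $\brho_x(v_x) = c(x) v_x$. The first square in \ref{qc.4} (the $\phi$-compatibility) reads $c(\Frob{}(x)) = c(x)$, i.e.\ $c$ is $\Weil{}$-invariant as a function on $G(\bFq)$; the second square (the $\mu$-compatibility) reads $c(x+y) = c(x) c(y)$, i.e.\ $c$ is a homomorphism. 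Hence $c \in \Hom(G(\bFq), \EEx)$ that is constant on $\Frob{}$-orbits, which is precisely $\Hom(G(\bFq)_{\Weil{}}, \EEx)$ once one checks that $\Weil{}$-invariance of a homomorphism $c$ is the same as factoring through the coinvariants $G(\bFq)_{\Weil{}} = G(\bFq)/(\Frob{}-1)G(\bFq)$: indeed $c(\Frob{}(x)) = c(x)$ for all $x$ says exactly that $c$ kills $(\Frob{}-1)G(\bFq)$. One must also verify that the correspondence is independent of the choice of bases $\{v_x\}$: changing $v_x$ to $d(x) v_x$ conjugates $\brho_x$ by the scalar $d(x)$, which has no effect since everything is one-dimensional, so $c$ is unchanged. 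Finally the map $\rho \mapsto c$ is a group homomorphism because composition of automorphisms multiplies the corresponding scalar functions pointwise, and it is bijective because any $c \in \Hom(G(\bFq)_{\Weil{}}, \EEx)$ manifestly defines a valid automorphism via $\brho_x(v_x) = c(x) v_x$.

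The phrase ``the trace map induces'' deserves a remark: the identification above can be repackaged so that it is the map sending an automorphism $\rho$ to the function $g \mapsto \trace(\brho_g \circ \phi_g; \gqcs{L}_g)/\trace(\phi_g;\gqcs{L}_g)$ on $G(\Fq)$, but since all stalks are one-dimensional this ratio is just $c(g)$, and the content is that restricting the homomorphism $c$ to $G(\Fq) = G(\bFq)^{\Weil{}}$ and then using Lemma~\ref{lem:dual-inv} (with $X = G(\bFq)$) gives $\Hom(G(\bFq)_{\Weil{}}, \EEx) \cong (G(\bFq)^{\Weil{}})^* = G(\Fq)^*$; either formulation of the target is acceptable and one should state clearly which is meant. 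I expect the main obstacle to be purely bookkeeping: carefully extracting the two commuting squares of \ref{qc.4} in the basis $\{v_x\}$, confirming that $\Weil{}$-invariance of $c$ is equivalent to factoring through coinvariants, and checking basis-independence—none of which is deep, but all of which must be stated precisely for the argument to be complete.
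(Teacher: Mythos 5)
Your proof is correct and follows the same route as the paper: invoke Lemma~\ref{lem:autornaught} for the dichotomy, then use the stalk description and the two squares of \ref{qc.4} to identify an automorphism with a homomorphism $G(\bFq)\to\EEx$ that is constant on Frobenius orbits, hence factors through $G(\bFq)_{\Weil{}}$. The extra care you take with bases, basis-independence, and unwinding the phrase ``trace map'' is just a more explicit rendering of what the paper's proof leaves implicit.
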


\begin{proof}
We have already seen, in Lemma~\ref{lem:autornaught}, that every $\rho\in \Hom(\cs{L},\cs{L}')$ is either trivial or an isomorphism.
Now suppose $\rho \in \Aut(\cs{L})$.
The second diagram in \ref{cs.4} shows that the association $x \mapsto \brho_x$ is a homomorphism
from $G(\bFq)$ to $\EEx$ and the first diagram in \ref{cs.4} shows that it factors through $G(\bFq) \to G(\bFq)_{\Weil{}}$.  

Conversely, if $\rho : G(\bFq)_{\Weil{}} \to \EEx$ is any homomorphism, then defining $\brho_x$ as multiplication
by $\rho(x)$ will define a morphism $\gcs{L} \to \gcs{L}'$ satisfy the two diagrams in \ref{cs.4}.  

Composition of morphisms corresponds to pointwise multiplication in this correspondence, showing that the
resulting bijection is actually a group isomorphism.
\end{proof}

\section{Character sheaves on smooth commutative group schemes over finite fields}\label{sec:main}

\subsection{Restriction to the identity component} \label{ssec:restriction}

Consider the short exact sequence
defining the component group scheme for $G$:
\begin{equation}\label{eq:pi0}
\begin{tikzcd}
0 \rar & G^0 \arrow{r}{\iota_0} & G \arrow{r}{\pi_0} & \pi_0(G) \rar & 0.
\end{tikzcd}
\end{equation}
Since $\pi_0(G)$ is an \'etale commutative group scheme -- and thus smooth --
Lemma~\ref{lem:pullback} implies that \eqref{eq:pi0} defines a sequence of functors
\begin{equation}\label{eq:pi1}
\begin{tikzcd}
\CS(0) \rar & \CS(\pi_0(G)) \arrow{r}{\pi_0^*} & \CS(G) \arrow{r}{\iota_0^*} & \CS(G^0) \rar & \CS(0)
\end{tikzcd}
\end{equation}
and therefore, after passing to isomorphism classes, a sequence of abelian groups
\begin{equation}\label{eq:pi2}
\begin{tikzcd}
0 \rar &
\CSiso{\pi_0(G)} \arrow{r}{\pi_0^*} & \CSiso{G} \arrow{r}{\iota_0^*} & \CSiso{G^0} \rar & 0.
\end{tikzcd}
\end{equation}
 Note that we found the groups $\CSiso{\pi_0(G)}$ and $\CSiso{G^0}$
in Sections~\ref{ssec:SandT} and \ref{ssec:connected}, respectively.
We will shortly see that \eqref{eq:pi2} is exact.

\begin{lemma}\label{lemma:ext}
Every discrete isogeny to $G^0$ extends to a discrete
isogeny to $G$ inducing an isomorphism on component groups.
\end{lemma}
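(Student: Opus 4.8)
The plan is to lift the component-group sequence of $G$ through the given isogeny. Write the discrete isogeny as a short exact sequence of smooth commutative group schemes over $\Fq$,
\[
0 \to A \to H_0 \xrightarrow{f_0} G^0 \to 0,
\]
in which $A \ceq \ker(f_0)$ is a finite \emph{constant} group scheme — the constancy being exactly the hypothesis that $\Gal(\bFq/\Fq)$ acts trivially on the kernel of $f_0$. The goal is to produce a smooth commutative group scheme $H$ over $\Fq$ together with a morphism of short exact sequences
\[
\begin{tikzcd}[row sep=20, column sep=24]
0 \rar & H_0 \rar \arrow{d}{f_0} & H \rar \arrow{d}{f} & \pi_0(G) \rar \arrow[equal]{d} & 0 \\
0 \rar & G^0 \rar & G \rar & \pi_0(G) \rar & 0
\end{tikzcd}
\]
lying over the sequence \eqref{eq:pi0}. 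Granting such a diagram, the snake lemma identifies $\ker(f)$ with $\ker(f_0) = A$, shows $\coker(f) = 0$, and shows that the left-hand square is cartesian. Hence $f$ is an isogeny with kernel $A$ that restricts over $G^0$ to $f_0$; it is finite and \'etale because $f_0$ is and $\pi_0(G)$ is \'etale; and $\Gal(\bFq/\Fq)$ acts trivially on $\ker(f) = A$ because it does through $f_0$. Thus $f$ is a discrete isogeny extending $f_0$, and it induces the identity — in particular an isomorphism — on component groups. So the lemma reduces to the construction of $H$ and this diagram.

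To construct $H$, I would argue by descent. Since $\pi_0(G)$ is \'etale, the quotient morphism $G \to \pi_0(G)$ realizes $G$ as a $G^0$-torsor over $\pi_0(G)$ that splits after a finite \'etale base change $S \to \pi_0(G)$; over $S$ one has $G \times_{\pi_0(G)} S \cong G^0 \times S$, with a descent datum recording the group law. One then replaces $G^0$ by $H_0$: take $H_0 \times S$, lift the descent datum of $G$ through the surjection $f_0$, and descend back down to $\pi_0(G)$. In cohomological terms, this is the problem of lifting the class of the extension \eqref{eq:pi0} in $\Ext^1(\pi_0(G), G^0)$ along $f_0$ to a class in $\Ext^1(\pi_0(G), H_0)$; the obstruction lies in $\Ext^2(\pi_0(G), A)$, and the point of the lemma is that this obstruction vanishes.

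The step I expect to be the main obstacle is exactly this: producing $H$, equivalently showing that the lifted descent datum still satisfies the cocycle identity making $H$ a group over $\pi_0(G)$. This is where the two structural inputs have to be used — that $A$ is a finite constant group scheme and that $\pi_0(G)$ is \'etale — together, where needed, with the fact that in the situations of Sections~\ref{ssec:connected} and \ref{ssec:restriction} one only needs the case where $f_0$ is a Lang isogeny, so $H_0$ is connected and $A = G^0(\Fq)$ is finite. Everything after $H$ is in hand — smoothness and commutativity of $H$, exactness of the top row, commutativity of the square, and the snake-lemma bookkeeping — is formal.
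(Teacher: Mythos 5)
Your reduction is formally sound: the snake-lemma bookkeeping, the identification $\ker f = A$, and the observation that $f$ is finite \'etale with constant kernel are all correct, and you rightly locate the obstruction to constructing $H$ in an $\Ext^2(\pi_0(G), A)$ group. But that is where your argument stops. You assert that ``the point of the lemma is that this obstruction vanishes'' and then explicitly flag the construction of $H$ as ``the main obstacle'' you have not surmounted. What you have is a correct reduction of the lemma to a vanishing claim, not a proof of that claim, so there is a genuine gap.

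The paper closes the gap by a device you do not invoke: it passes to $\bFq$-points. Under the equivalence between \'etale commutative group schemes over $\Fq$ and abelian groups with continuous $\Gal(\bFq/\Fq)$-action, the problem becomes one about $\ZZ[\Weil{}]$-modules, and the relevant obstruction group is $\Ext^2_{\ZZ[\Weil{}]}(\pi_0(G)(\bFq), A(\bFq))$; this vanishes because $\Weil{} \cong \ZZ$ has cohomological dimension~$1$. That dimension bound is the engine of the argument, and it never appears in your proposal --- you attribute the vanishing to ``$A$ constant and $\pi_0(G)$ \'etale,'' but those facts only license the passage to $\ZZ[\Weil{}]$-modules; they do not by themselves make $\Ext^2$ vanish. (A minor further difference: the paper applies $\Hom(-, A(\bFq))$ to the component group sequence and lifts the class of $B$ along restriction, whereas you apply $\Hom(\pi_0(G), -)$ to the kernel sequence and lift the class of $G$ along pushforward; both obstructions land in the same $\Ext^2$ but they are different classes.) Finally, having constructed the extension as a $\ZZ[\Weil{}]$-module, the paper does not descend along a splitting cover of $\pi_0(G)$ --- a step whose meaning is unclear when $\pi_0(G)$ is an infinite disjoint union of points --- but instead reconstructs $H$ by transport of structure: $H(\bFq)$ is a disjoint union of translates of $B(\bFq)$, so $H$ acquires the structure of a $\bFq$-scheme, and the whole diagram descends to $\Fq$ because the $\Gal(\bFq/\Fq)$-actions are continuous and all maps are equivariant.
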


\begin{proof}
Let $\pi: B \to G^0$ be a discrete isogeny, and set $A \ceq \ker \pi$.
  We will find a discrete isogeny $f: H\to G$
  such that that $H^0 = B$, $f^0 =\pi$ and
  $\pi_0(f) : \pi_0(H)\to \pi_0(G)$ is an isomorphism of component
  groups.  Namely, we will fit $\pi$ into the following diagram,
  \begin{equation}\label{extension-diagram}
  \begin{tikzcd}
  A \arrow{r} \dar & A \dar \\
  B \rar \dar[swap]{\pi} & H \rar \dar[swap]{f} & \pi_0(H) \arrow{d}[below,rotate=90]{\sim}[swap]{\pi_0(f)} \\
  G^0 \rar & G \rar & \pi_0(G),
  \end{tikzcd}
  \end{equation}
  where all rows and columns are exact and all maps are defined over
  $\Fq$.  We will do so by passing back and forth between group
  schemes over $\Fq$ and their $\bFq$-points.

  Extensions of $G^0(\bFq)$ by $A(\bFq)$ with $\Weil{}$-equivariant maps, such as $B(\bFq)$,
  correspond to classes in $\Ext^1_{\ZZ[\Weil{}]}(G^0(\bFq), A(\bFq))$.
  Similarly, extensions of $G(\bFq)$ by $A(\bFq)$ with $\Weil{}$-equivariant maps correspond to
  classes in $\Ext^1_{\ZZ[\Weil{}]}(G(\bFq), A(\bFq))$.  The map
  $G^0(\bFq) \to G(\bFq)$ induces a homomorphism
  \[
  \Ext^1_{\ZZ[\Weil{}]}(G(\bFq), A(\bFq)) \to \Ext^1_{\ZZ[\Weil{}]}(G^0(\bFq), A(\bFq))
  \]
  fitting into the long exact sequence 
  \[
  \Ext^1_{\ZZ[\Weil{}]}(G(\bFq), A(\bFq)) \to \Ext^1_{\ZZ[\Weil{}]}(G^0(\bFq), A(\bFq)) \to \Ext^2_{\ZZ[\Weil{}]}(\pi_0(G)(\bFq), A(\bFq))
  \]
  derived from applying
  the functor $\Hom(\mbox{---}, A(\bFq))$ to $G^0(\bFq) \to G(\bFq) \to \pi_0(G)(\bFq)$.
  Since $\Weil{} \cong \ZZ$ has cohomological dimension $1$ \cite{brown:CohomologyGrps}*{Ex. 4.3},
  $\Ext^2_{\ZZ[\Weil{}]}(\pi_0(G)(\bFq), A(\bFq))$ vanishes \cite{cartan-eilenberg:HomologicalAlgebra}*{Thm. 2.6}.

  We therefore have the existence of diagram \eqref{extension-diagram}
  at the level of $\bFq$-points.  This expresses $H(\bFq)$ as a
  disjoint union of translates of $B(\bFq)$; by transport of structure
  we may take $H$ to be a group scheme over $\bFq$.  Similarly, the
  restriction of $f$ to each component of $H$ is a morphism of
  schemes, and thus $f$ is as well.  Finally, the whole diagram
  descends to a diagram of $\Fq$-schemes since the $\bFq$-points of
  the objects come equipped with continuous $\Gal(\bFq/\Fq)$-actions and the
  morphisms are $\Gal(\bFq/\Fq)$-equivariant.
\end{proof}

We now wish to apply the results of Section~\ref{ssec:connected} to the identity component of $G$,
for which we must confirm that the identity component of $G$ is actually an algebraic group over $\Fq$.

\begin{lemma} \label{lem:G0alg-grp}
If $G$ is a commutative smooth group scheme over $\Fq$ then its identity component, $G^0$, is a connected algebraic group over $\Fq$.
\end{lemma}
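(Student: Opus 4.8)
The plan is to reduce the statement to two separate claims: that $G^0$ is connected (immediate from the definition of the identity component), and that $G^0$ is of finite type over $\Fq$. The word ``smooth'' in ``smooth commutative group scheme'' is, for us, a standing hypothesis that does \emph{not} include finite type (indeed $\Gr_R(X)$ later fails to be of finite type), so the content here is genuinely the finite-type conclusion for the identity component. First I would recall that for any group scheme $G$ locally of finite type over a field, the identity component $G^0$ is an open (hence also closed, being the complement of the other components) subgroup scheme, and is itself locally of finite type, connected, and geometrically connected over that field; this is standard, e.g.\ \cite{SGA3}*{Exp.\ VI$_{\mathrm A}$}. So the only thing to check is that our $G$ is \emph{locally} of finite type, and then that $G^0$ is quasi-compact.

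The local finite-type hypothesis is already in force: throughout Section~\ref{ssec:notation} we declared that we work with schemes locally of finite type over $\Fq$, so $G$ is locally of finite type, and therefore so is the open subscheme $G^0$. Thus $G^0$ is a connected group scheme locally of finite type over the field $\Fq$. The remaining point is quasi-compactness. Here I would invoke the classical fact (Chevalley; see \cite{SGA3}*{Exp.\ VI$_{\mathrm A}$, \S2} or \cite{conrad:reductive}*{Prop.\ on connected group schemes loc.\ fin.\ type}) that a connected group scheme locally of finite type over a field is automatically quasi-compact, hence of finite type. The argument for this is short: pick a nonempty quasi-compact open $U \subseteq G^0$; then $U \cdot U$ is the image of the quasi-compact $U \times_{\Fq} U$ under the multiplication morphism, hence quasi-compact; iterating, $\bigcup_n U^{\cdot n}$ is an open subgroup scheme, and since $G^0$ is connected this union must be all of $G^0$; but an ascending union of opens whose union is quasi-compact stabilizes after finitely many steps (alternatively: the union is an open subgroup, its cosets are open, so by connectedness it is everything, and one checks it is already quasi-compact because over a Noetherian base any scheme locally of finite type has underlying Noetherian-spectral considerations forcing termination). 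Either way $G^0$ is quasi-compact.

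Combining: $G^0$ is a group scheme over $\Fq$ that is connected, smooth (an open subscheme of the smooth scheme $G$), and of finite type — that is, a connected smooth commutative algebraic group over $\Fq$, as claimed. The main obstacle, such as it is, is purely expository: making precise that our blanket hypotheses supply ``locally of finite type'' and that the jump to ``finite type'' is exactly the Chevalley quasi-compactness fact for connected groups; there is no real difficulty, and commutativity of $G^0$ is inherited trivially from $G$.
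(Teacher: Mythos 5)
Your proof is correct and follows essentially the same route as the paper's: both reduce the claim to the standard fact (SGA3, Exp.\ VI$_A$, or van der Geer--Moonen, Prop.\ 3.17) that the identity component of a group scheme locally of finite type over a field is open, connected, and of finite type, with smoothness and commutativity inherited from $G$. The only cosmetic difference is that you unpack the Chevalley quasi-compactness argument where the paper simply cites it, and the paper adds a remark on reducedness via perfectness of $\Fq$ and on separatedness (both redundant given that $G$ is assumed smooth and group schemes over a field are automatically separated), which you correctly skip.
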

\begin{proof}
 Since $G$ is a smooth group scheme over $\Fq$, its
 identity component $G^0$ is a connected smooth,
 group scheme of finite type over $\Fq$, reduced over some finite extension of $\Fq$
 \cite{vdGeer-Moonen:AbelianVarieties}*{3.17}.
 Since $\Fq$ is a finite field and hence perfect, $G^0$ is actually reduced over $\Fq$
 \cite{EGAIV2}*{Prop 6.4.1}.  Since every group scheme over a field is separated
 \cite{vdGeer-Moonen:AbelianVarieties}*{3.12},
 it follows that $G^0$ is a connected algebraic group.
\end{proof}

\begin{proposition}\label{prop:restriction}
The restriction functor $\iota_0^* : \CS(G)\to \CS(G^0)$ is essentially surjective.
\end{proposition}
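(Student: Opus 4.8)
The plan is to start with an arbitrary quasicharacter sheaf $\qcs{L}_0 = (\gqcs{L}_0,\mu_0,\phi_0)$ on the connected algebraic group $G^0$ (which is indeed a connected commutative algebraic group over $\Fq$ by Lemma~\ref{lem:G0alg-grp}) and produce a quasicharacter sheaf on $G$ whose restriction along $\iota_0$ is isomorphic to $\qcs{L}_0$. By Proposition~\ref{prop:connected}(ii), every quasicharacter sheaf on $G^0$ is isomorphic to one arising from a discrete isogeny $\pi : B \to G^0$ via a one-dimensional representation $V$ of $A := \ker\pi$ with a chosen isomorphism $V \to V\otimes V$; so it suffices to treat that case. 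The key input is Lemma~\ref{lemma:ext}, which extends $\pi : B\to G^0$ to a discrete isogeny $f : H\to G$ with $H^0 = B$, $f^0 = \pi$, and $\pi_0(f)$ an isomorphism. Since $\ker f = A = \ker\pi$ (the left column of \eqref{extension-diagram} is $A \xrightarrow{\sim} A$), the same data $(V,\psi,V\to V\otimes V)$ now defines, by Proposition~\ref{prop:finite}, an object $\qcs{E} := (f_! V_H)_\psi$ of $\bQC(G)$, hence a quasicharacter sheaf $B_G(\qcs{E})$ on $G$ via Proposition~\ref{prop:BG}.

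The heart of the argument is then to check that $\iota_0^* B_G(\qcs{E}) \iso \qcs{L}_0$ in $\QC(G^0)$. This is a base-change computation: the square
\[
\begin{tikzcd}
B \rar{\iota_B} \dar[swap]{\pi} & H \dar{f} \\
G^0 \rar{\iota_0} & G
\end{tikzcd}
\]
is Cartesian (it is the left square of \eqref{extension-diagram}, with $\iota_B$ the inclusion of the identity component and kernels matching), so proper base change — or here just the fact that $f$ is finite étale, so $f_!$ commutes with $\iota_0^*$ — gives $\iota_0^* f_! V_H \iso \pi_! (\iota_B^* V_H) = \pi_! V_B$, compatibly with the $A$-actions; taking $\psi$-isotypic components yields $\iota_0^*\qcs{E} \iso (\pi_! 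V_B)_\psi$, which is exactly the object of $\bQC(G^0)$ defining $\qcs{L}_0$. One must track that the multiplicative structure $\mu$ is preserved under this identification, but since on both sides $\mu$ is induced by applying the relevant pullback functor to the single isomorphism $m^*\psi \iso \psi\boxtimes\psi$ coming from $V\to V\otimes V$ (as in the proof of Proposition~\ref{prop:finite}), and pullback functors compose, this is automatic. Finally $B_{G^0}$ is full and faithful (Proposition~\ref{prop:BG}), so the isomorphism in $\bQC(G^0)$ upgrades to the desired isomorphism $\iota_0^* B_G(\qcs{E}) \iso B_{G^0}((\pi_! V_B)_\psi) \iso \qcs{L}_0$ in $\QC(G^0)$.

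The main obstacle I anticipate is bookkeeping rather than conceptual: verifying that the Cartesian square above really does arise from Lemma~\ref{lemma:ext} with the kernel identified correctly, and that the isomorphism $\iota_0^*\qcs{E}\iso(\pi_!V_B)_\psi$ respects $\mu$ — i.e. that the two routes from $\bm_{G^0}^*$ to $\boxtimes$ through the various pullbacks commute. Both reduce to functoriality of $(-)^*$ and $(-)_!$ together with the compatibility of base change with tensor products, so no genuinely new idea is needed; one just has to be careful that the discrete-isogeny description of $\qcs{L}_0$ from Proposition~\ref{prop:connected}(ii) is compatible with the extension produced by Lemma~\ref{lemma:ext}, and to invoke Proposition~\ref{prop:connected}(ii) at the outset so that treating only the discrete-isogeny case loses no generality.
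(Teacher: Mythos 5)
Your proposal is correct and takes essentially the same approach as the paper: reduce to the discrete-isogeny presentation of a quasicharacter sheaf on $G^0$ via Proposition~\ref{prop:connected}(ii), extend the discrete isogeny to $G$ via Lemma~\ref{lemma:ext}, and observe that $(f_!V_H)_\psi$ restricts to $(\pi_!V_B)_\psi$. You spell out the base-change/Cartesian-square verification that the paper leaves implicit, but the structure and key inputs are identical.
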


\begin{proof}
  By Lemma~\ref{lem:G0alg-grp} and Proposition~\ref{prop:connected}, every
  character sheaf on $G^0$ is isomorphic to $(\pi_! \EE)_\psi$ for some discrete isogeny $\pi : B \to G^0$ and character $\psi : \ker \pi \to \EEx$.
  So to prove the proposition it suffices to show that $(\pi_! \EE)_\psi$ extends to a character sheaf on $G$.
 By Lemma~\ref{lemma:ext}, there is an extension of the
 discrete isogeny $\pi : B \to G^0$ to a discrete isogeny $f : H \to G$
 such that $\pi_0(f) : \pi_0(H)\to \pi_0(G)$ is an isomorphism.
 Then $(f_! \EE)_\psi$ is a character sheaf on $G$ and
 $(f_! \EE)_\psi\vert_{G^0} \iso (\pi_! \EE)_\psi$.
\end{proof}

\subsection{The component group sequence} \label{ssec:component}

\begin{lemma}\label{lem:extension}
The group homomorphism $\pi_0^*: \CSiso{\pi_0(G)} \to \CSiso{G}$ is injective.
\end{lemma}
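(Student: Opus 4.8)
\textbf{Proof proposal for Lemma~\ref{lem:extension}.}
The plan is to show that the composite $\iota_0^* \circ \pi_0^*$ is the zero map on $\QCiso{G}$, and then to prove injectivity of $\pi_0^*$ directly by exhibiting an explicit section on the level of quasicharacter sheaves, rather than by a diagram chase. The key point is that $\pi_0 \circ \iota_0$ is the trivial homomorphism $G^0 \to \pi_0(G)$, so by the functoriality of pullback (Lemma~\ref{lem:pullback}), $\iota_0^* \circ \pi_0^* = (\pi_0 \circ \iota_0)^*$ sends every quasicharacter sheaf on $\pi_0(G)$ to the trivial quasicharacter sheaf on $G^0$. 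That already tells us $\image(\pi_0^*) \subseteq \ker(\iota_0^*)$; the work is to show $\pi_0^*$ has trivial kernel.

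First I would reduce to a statement about stalks and trace of Frobenius. Suppose $\qcs{M} = (\gqcs{M}, \mu, \phi)$ is a quasicharacter sheaf on $\pi_0(G)$ with $\pi_0^*\qcs{M} \iso (\EE)_G$ in $\QC(G)$. Since $\pi_0(G)$ is \'etale, Proposition~\ref{prop:etale-iso} identifies $\QCiso{\pi_0(G)}$ with $\Hh^2(E^\bullet_{\pi_0(G)})$ via $S_{\pi_0(G)}$, and the surjection onto $\pi_0(G)(\Fq)^*$ is split with kernel $\Hh^2(\overline{\pi_0(G)},\EEx)^{\Weil{}}$. So it suffices to check two things: that $\TrFrob{\pi_0(G)}(\qcs{M})$ is trivial forces $\qcs{M}$ to lie in $\Hh^2(\overline{\pi_0(G)},\EEx)^{\Weil{}}$, and that an element of this kernel whose pullback to $G$ is trivial must itself be trivial. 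The first follows from the commutative square of Lemma~\ref{lem:pullback} together with the surjectivity of $\pi_0 \colon G(\Fq) \to \pi_0(G)(\Fq)$: indeed $\pi_0 \colon G(\bFq) \to \pi_0(G)(\bFq)$ is surjective because $G$ is smooth, so in particular $\pi_0 \colon G(\Fq) \to \pi_0(G)(\Fq)$ is surjective (a smooth group scheme over a finite field has rational points in every component that has $\bFq$-points fixed by Frobenius — more simply, $\Hh^1(\Weil{}, G^0(\bFq)) = 0$ by Lang's theorem applied componentwise), so $\pi_0^* \colon \pi_0(G)(\Fq)^* \to G(\Fq)^*$ is injective.

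For the second and harder part, I would work with cocycle representatives. Choose, as in Section~\ref{ssec:S}, a cocycle $\alpha \oplus \beta \in Z^2(E^\bullet_{\pi_0(G)})$ representing $S_{\pi_0(G)}(\qcs{M})$, with $\beta = 0$ since we are in the kernel of $\TrFrob{\pi_0(G)}$, so $\alpha \in Z^2(\overline{\pi_0(G)},\EEx)^{\Weil{}}$. Pulling back along $\pi_0 \colon G \to \pi_0(G)$ gives the cocycle $(\pi_0^*\alpha) \oplus 0 \in Z^2(E^\bullet_G)$ representing $S_G(\pi_0^*\qcs{M})$ — here I am using that the construction of $S_G$ is compatible with pullback, which follows from the naturality already established in the pullback functor. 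The hypothesis $\pi_0^*\qcs{M} \iso (\EE)_G$ says $S_G(\pi_0^*\qcs{M}) = 0$, i.e. $\pi_0^*\alpha$ is a coboundary in $E^\bullet_G$; unwinding this (and using that the $\Weil{}$-part $\beta$ is already zero), it means $\pi_0^*\alpha = d_G \sigma + (\text{something killed by } d_{\Weil{}})$ for a suitable cochain, and ultimately that $[\pi_0^*\alpha] = 0$ in $\Hh^2(\bG,\EEx)^{\Weil{}}$. So the lemma comes down to: the inflation map $\Hh^2(\overline{\pi_0(G)},\EEx) \to \Hh^2(\bG,\EEx)$ is injective on $\Weil{}$-invariants. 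This is the main obstacle. I expect to handle it using the Hochschild--Serre spectral sequence for $0 \to G^0(\bFq) \to G(\bFq) \to \pi_0(G)(\bFq) \to 0$ together with the fact that $G^0(\bFq)$ is a divisible group (being the $\bFq$-points of a connected commutative algebraic group over an algebraically closed field), so that $\Hh^j(\overline{\pi_0(G)}, \Hh^i(G^0(\bFq),\EEx))$ simplifies: $\Hom(G^0(\bFq),\EEx)$ and higher cohomology with divisible coefficients interact so that the low-degree inflation-restriction sequence forces injectivity of inflation in degree $2$. Once inflation is injective on cohomology it is a fortiori injective on $\Weil{}$-invariants, giving $[\alpha] = 0$, hence $\qcs{M} \iso (\EE)_{\pi_0(G)}$, which is exactly the injectivity of $\pi_0^*$.

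A cleaner alternative for that last step, which I would pursue if the spectral-sequence bookkeeping gets unwieldy, is to observe that $\Hh^2(\bar{H},\EEx)$ for $H = \pi_0(G)$ classifies central extensions of $H(\bFq)$ by $\EEx$, and that the pullback of such an extension along $\pi_0 \colon G(\bFq) \to H(\bFq)$ being split means the original extension, when restricted along a section of $\pi_0$ over each component, is split — and since $\pi_0$ admits a set-theoretic section, one can directly construct a splitting of the original extension from a splitting of the pulled-back one. This avoids cohomological dimension arguments entirely and only uses the surjectivity of $\pi_0$ on $\bFq$-points.
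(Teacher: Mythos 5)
Your main line of argument is valid but takes a far longer route than the paper's. The paper's proof is a direct computation on stalks: since $\pi_0^*\gqcs{L}$ is a \emph{constant} sheaf $(\gqcs{L}_x)_{\bG^x}$ on each geometric connected component $\bG^x = \pi_0^{-1}(x)$, an isomorphism $\brho : \pi_0^*\gqcs{L} \to (\EE)_G$ restricted to $\bG^x$ is just a single scalar $\brho_x : \gqcs{L}_x \to \EE$; these scalars immediately assemble into an isomorphism $\qcs{L} \to (\EE)_{\pi_0(G)}$ satisfying \ref{qc.4}. That is the whole argument. You instead route through $S_{\pi_0(G)}$, the spectral sequence, and the five-term inflation--restriction sequence: you first use injectivity of $\pi_0^* : \pi_0(G)(\Fq)^* \to G(\Fq)^*$ (Lang's theorem) to reduce to the $\Hh^2$-kernel, then show that $[\pi_0^*\alpha] = 0$ in $\Hh^2(\bG,\EEx)^{\Weil{}}$ via $S_G(\pi_0^*\qcs{M}) = 0$ and injectivity of $\Hh^0(\Weil{},\Hh^2(\bG,\EEx)) \to \Hh^2(E^\bullet_G)$ (Corollary~\ref{cor:spectral_ses}), and then invoke injectivity of inflation in group cohomology of the extension $0 \to G^0(\bFq) \to G(\bFq) \to \pi_0(\bG) \to 0$. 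That inflation injectivity is correct, but the justification you gesture at needs to be made explicit: it comes down to the surjectivity of restriction $\Hom(G(\bFq),\EEx) \to \Hom(G^0(\bFq),\EEx)$, which holds because $\EEx$ is divisible (hence injective as a $\ZZ$-module), and then the five-term sequence kills the transgression. You also implicitly use compatibility of $S$ with pullback along $\pi_0$, which is true (pick the stalk basis at $g$ to be the one at $\pi_0(g)$) but is not stated in the paper and should be checked.

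The ``cleaner alternative'' in your last paragraph is \emph{not} correct as written. A group-theoretic splitting $t : G(\bFq) \to E$ of the pulled-back extension, precomposed with a merely set-theoretic section $\sigma$ of $\pi_0$, does not give a homomorphism $\pi_0(\bG) \to E$. What one must do instead is observe that $t\vert_{G^0(\bFq)}$ lands in $\EEx$, extend that character of $G^0(\bFq)$ to a character $\chi$ of $G(\bFq)$ (again using divisibility of $\EEx$), and then replace $t$ by $t\cdot\chi^{-1}$, which now factors through $\pi_0(\bG)$. So the ``alternative'' does not avoid the cohomological-dimension/divisibility input; it uses exactly the same one, and as stated the reasoning has a gap.
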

\begin{proof}
Let $\cs{L}$ be a character sheaf on $\pi_0(G)$ and let $\rho : \pi_0^*\cs{L} \to (\EE)_{G}$ be an isomorphism in $\CS(G)$. 
For each $x\in \pi_0(\bG)$, set $\bG^x \ceq \pi_0^{-1}(x)$.
The restriction $\pi_0^*\gcs{L}\vert_{\bG^x}$ is the constant sheaf $(\gcs{L}_x)_{\bG^x}$ so the isomorphism
$\brho\vert_{\bG^x} : (\gcs{L}_x)_{\bG^x} \to (\EE)_{\bG^x}$ determines an isomorphism $\brho_x : \gcs{L}_x \to (\EE)_x$.
The collection $\{ \brho_x \tq x\in \pi_0(\bG) \}$ determines an isomorphism $\cs{L} \to (\EE)_{\pi_0(G)}$  in $\CS(\pi_0(G))$.
\end{proof}

\begin{proposition}\label{prop:middleexact}
 The sequence
 \[
  \begin{tikzcd}
  0 \rar & \CSiso{\pi_0(G)} \arrow{r}{\pi_0^*} & \CSiso{G} \arrow{r}{\iota_0^*} & \CSiso{G^0} \rar & 0.
  \end{tikzcd}
 \]
 is exact.
\end{proposition}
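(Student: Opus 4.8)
The plan is to observe first that exactness at the two outer terms is already available: $\pi_0^*$ is injective by Lemma~\ref{lem:extension}, and $\iota_0^*$ is surjective on isomorphism classes because it is essentially surjective as a functor by Proposition~\ref{prop:restriction}. So the only thing left to prove is exactness at $\QCiso{G}$, i.e. $\image(\pi_0^*) = \ker(\iota_0^*)$. For the inclusion $\image(\pi_0^*) \subseteq \ker(\iota_0^*)$ I would use that $\pi_0 \circ \iota_0 : G^0 \to \pi_0(G)$, being a homomorphism from a connected group scheme to an \'etale one, is the zero morphism; hence by Lemma~\ref{lem:pullback} the composite $\iota_0^* \circ \pi_0^*$ produces quasicharacter sheaves on $G^0$ whose trace of Frobenius is the trivial character, and any such sheaf is trivial in $\QCiso{G^0}$ because $\TrFrob{G^0}$ is injective by Proposition~\ref{prop:connected}.

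The substance is the reverse inclusion $\ker(\iota_0^*) \subseteq \image(\pi_0^*)$. Given $\qcs{L} = (\gqcs{L},\mu,\phi) \in \QC(G)$ with $\iota_0^*\qcs{L}$ trivial, I would fix an isomorphism $j : \gqcs{L}|_{\bG^0} \xrightarrow{\sim} (\EE)_{\bG^0}$ in $\QC(G^0)$, and for $x \in \pi_0(\bG)$ write $\bG^x := \pi_0^{-1}(x)$ as in the proof of Lemma~\ref{lem:extension}. The key step is to show that $\gqcs{L}$ is constant on each component $\bG^x$. For this I would restrict $\mu : \bm^*\gqcs{L} \to \LxL$ to $\bG^0\times\bG^x \subset \bGxG$: there $\bm$ becomes the addition map $a : \bG^0\times\bG^x \to \bG^x$, the second projection becomes $p : \bG^0\times\bG^x\to\bG^x$, and after trivializing the first tensor factor via $j$ the isomorphism $\mu$ becomes an isomorphism $a^*(\gqcs{L}|_{\bG^x}) \cong p^*(\gqcs{L}|_{\bG^x})$. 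Pulling this back along the section $h \mapsto (h,g)$ for a chosen $\bFq$-point $g \in \bG^x$, which composes with $a$ to the translation isomorphism $\tau_g : \bG^0 \to \bG^x$ and with $p$ to the constant map at $g$, yields $\tau_g^*(\gqcs{L}|_{\bG^x}) \cong (\gqcs{L}_g)_{\bG^0}$, so $\gqcs{L}|_{\bG^x} \cong (\gqcs{L}_g)_{\bG^x}$ is constant.

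Constancy on components produces an isomorphism $\gqcs{L} \cong \pi_0^*\gqcs{M}$, where $\gqcs{M}$ is the rank-one local system on the component group scheme of $\bG$ with stalk $\gqcs{L}_g$ over $x$ (any $g\in\bG^x$). Since $\pi_0$ and the induced maps on pairs and triples of factors have connected fibres, pullback along them is fully faithful on constant sheaves -- a morphism of constant sheaves on a connected scheme is the same as a morphism of stalks -- so $\mu$ and $\phi$ transport uniquely along $\gqcs{L}\cong\pi_0^*\gqcs{M}$ to isomorphisms $\mu_M$, $\phi_M$ on $\gqcs{M}$, and the commutativity conditions \ref{QC.2} and \ref{QC.3} for $(\gqcs{M},\mu_M,\phi_M)$ descend from those for $\qcs{L}$ by the same fully faithful pullbacks (equivalently, one can carry out this transport stalkwise using the description in \S\ref{ssec:stalks}). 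Then $\qcs{M} := (\gqcs{M},\mu_M,\phi_M)$ lies in $\QC(\pi_0(G))$ and $\pi_0^*\qcs{M} \cong \qcs{L}$, so $[\qcs{L}] \in \image(\pi_0^*)$, finishing the proof. I expect the constancy step -- extracting triviality on all of $\bG$ from triviality on $\bG^0$ together with the multiplicativity isomorphism $\mu$ -- to be the one genuine point; everything else is citation or routine transport of structure.
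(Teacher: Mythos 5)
Your proof is correct and follows essentially the same route as the paper's: the outer terms are disposed of by citing Lemma~\ref{lem:extension} and Proposition~\ref{prop:restriction}, and the real content---showing that a quasicharacter sheaf trivial on $G^0$ lies in the image of $\pi_0^*$---rests on restricting $\mu$ over $\bG^0 \times \bG^x$ and exploiting the trivialization on $\bG^0$, exactly as the paper does. The only difference is expository: you package the constancy on each $\bG^x$ as a single sheaf-level pullback manipulation followed by an appeal to full faithfulness of $\pi_0^*$ on component-wise constant sheaves, whereas the paper builds the same identification stalk by stalk via the $\varphi_{g,g'}$.
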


\begin{proof}
Exactness at $\CSiso{G^0}$ follows from Proposition~\ref{prop:restriction},
and exactness at $\CSiso{\pi_0(G)}$ from Lemma~\ref{lem:extension}.
Here we show that it is also exact at $\CSiso{G}$.
First note that $\iota_0^* \circ \pi_0^*$ is trivial by Lemma~\ref{lem:pullback}.
So it suffices to show that if $\cs{L} = (\gcs{L},\mu,\phi)$ is a character sheaf on $G$
with $\cs{L}\vert_{G^0} = (\EE)_{G^0}$ then $\cs{L}$ is in the essential image of $\pi_0^*$.

As above, set $\bG^x \ceq \pi_0^{-1}(x)$ for $x\in \pi_0(\bG)$.
Let $g, g'$ be geometric points in the same
geometric connected component $\bG^x$.
Set $a = g^{-1}g'$ and note that $a$ is a geometric point in $\bG^0$.
Let $\mu_{g,a} : \gcs{L}_{ga} \to \gcs{L}_g \otimes \gcs{L}_a$
be the isomorphism of vector spaces obtained by restriction of
$\mu : m^*\gcs{L} \to \gcs{L} \boxtimes \gcs{L}$ to the
geometric point $(g,a)$ on $\bG^x \times \bG^0$.
Since $\cs{L}\vert_{G^0} = (\EE)_{G^0}$,
the stalk of $\gcs{L}$ at $a$ is $\EE$.
In this way the pair of geometric points $g, g' \in \bG^x$
determines an isomorphism $\varphi_{g,g'} \ceq \mu_{g,a}^{-1}$
from $\gcs{L}_{g}$ to $\gcs{L}_{g'}$.
The isomorphisms $\varphi_{g,g'}: \gcs{L}_{g} \to \gcs{L}_{g'}$ are canonical
in the following sense: if $g,g'\in \bG^x$ and $h,h'\in \bG^y$
then it follows from \ref{CS.2} and \ref{CS.3}
that
 \begin{equation}\label{eq:CS}
  \begin{tikzcd}[column sep=40]
   \gcs{L}_{gh} \arrow{r}{\varphi_{gh,g'h'}} \arrow[swap]{d}{\mu_{g,h}}
  & \gcs{L}_{g'h'} \arrow{d}{\mu_{g',h'}}
  &\arrow[draw=none]{d}[pos=.4,description]{\text{\normalsize{and}}}
  &  \gcs{L}_{\Frob{}(g)} \arrow{r}{\varphi_{\Frob{}(g),\Frob{}(g')}} \arrow[swap]{d}{\phi_{g}} & \gcs{L}_{\Frob{}(g')} \arrow{d}{\phi_{g'}} \\
  \gcs{L}_{g} \otimes \gcs{L}_{h} \arrow{r}{\varphi_{g,g'}\otimes \varphi_{h,h'}}
  & \gcs{L}_{g'} \otimes \gcs{L}_{h'}
  & {}
  & \gcs{L}_{g} \arrow{r}{\varphi_{g,g'}} & \gcs{L}_{g'}
  \end{tikzcd}
 \end{equation}
both commute.

For each $x\in \pi_0(\bG)$, pick $g(x)\in \bG^x$
and set $\gcs{E}_x \ceq \gcs{L}_{g(x)}$.
Let $\phi_x : \gcs{E}_{\Frob{}(x)} \to \gcs{E}_x$
be the isomorphism of $\EE$-vector spaces obtained by composing
$\varphi_{g(\Frob{}(x)),\Frob{}(g(x))} : \gcs{L}_{g(\Frob{}(x))} \to \gcs{L}_{\Frob{}(g(x))}$
with $\phi_{g(x)} : \gcs{L}_{\Frob{}(g(x))} \to \gcs{L}_{g(x)}$.
For each pair $x,y\in \pi_0(\bG)$
let $\mu_{x,y} : \gcs{E}_{x+y}\to \gcs{E}_x\otimes \gcs{E}_y$
be the isomorphism of $\EE$-vector spaces obtained by composing
$\varphi_{g(x+y),g(x)g(y)} : \gcs{L}_{g(x+y)} \to \gcs{L}_{g(x)g(y)}$
with $\mu_{g(x),g(y)} : \gcs{L}_{g(x)g(y)} \to \gcs{L}_{g(x)}\otimes \gcs{L}_{g(y)}$.
Using \eqref{eq:CS}, it follows that \ref{CS.1}, \ref{CS.2} and \ref{CS.3} are satisfied for
$\cs{E} \ceq (\gcs{E}_x, \mu_{x,y}, \phi_x)$, thus defining a character sheaf on $\pi_0(G)$.

The pullback $\pi_0^*(\cs{E})$ of $\cs{E}$ along $\pi_0 : G \to \pi_0(G)$ is constant
on geometric connected components, with stalks given by
$(\pi_0^* \cs{E})_g = \cs{E}_{x}$ for all $g\in \bG^x$.  Thus both $\pi_0^*\cs{E}$ and $\cs{L}$
are constant on geometric connected components of $G$.
The choices above define isomorphisms
$\gcs{L}\vert_{\bG^x} \to  (\gcs{E}_{x})_{\bG^x}$ for each $x\in \pi_0(\bG)$.
The resulting isomorphism $\gcs{L} \to \pi_0^* \gcs{E}$ satisfies \ref{CS.4},
thus defining an isomorphism $\cs{L} \to \pi_0^* \cs{E}$ in $\CS(G)$.
\end{proof}

\subsection{The dictionary}
\label{ssec:snake}

We saw in Proposition~\ref{prop:functorialG} that $\TrFrob{G} : \CSiso{G} \to G(\Fq)^*$ is a functorial group homomorphism.
In this section we find the image and kernel of $\TrFrob{G}$.

\begin{theorem}\label{thm:snake}
  If $G$ is a smooth commutative group scheme over $\Fq$ then
  $\TrFrob{G} : \CSiso{G} \to G(\Fq)^*$ is surjective and has kernel canonically isomorphic to $\Hh^2(\pi_0(\bG),\EEx)^{\Weil{}}$:
    \[
  \begin{tikzcd}
0 \arrow{r} & \Hh^2(\pi_0(\bG),\EEx)^{\Weil{}} \arrow{r} & \CSiso{G} \arrow{r}{\TrFrob{G}} & G(\Fq)^* \arrow{r} & 0. 
  \end{tikzcd}
\]
is an exact sequence.
\end{theorem}

\begin{proof}
  Let
  \begin{equation}\label{eq:pre-snake}
  \begin{tikzcd}[row sep=20, column sep=20]
    0 \rar & \CSiso{\pi_0(G)} \rar \dar{\TrFrob{\pi_0(G)}}
    & \CSiso{G} \rar \dar{\TrFrob{G}} & \CSiso{G^0} \rar \dar{\TrFrob{G^0}} & 0\\
    0 \rar & \pi_0(G)(\Fq)^* \rar
    &  G(\Fq)^* \rar & G^0(\Fq)^* \rar & 0
  \end{tikzcd}
  \end{equation}
  be the commutative diagram of abelian groups obtained by applying
  Lemma~\ref{lem:pullback} to \eqref{eq:pi0}.
  The sequence of abelian groups
\[
  \begin{tikzcd}
    1 \rar & G^0(\Fq) \rar & G(\Fq) \rar & \pi_0(G)(\Fq) \rar & 0,
  \end{tikzcd}
\]
  is exact since $\Hh^1(\Fq,G^0) =0$ by Lemma~\ref{lem:G0alg-grp} and Lang's theorem on connected algebraic groups over finite fields \cite{lang:56a}.
  Since $\EEx$ is divisible, $\Hom(\ - \ ,\EEx)$ is exact and thus the dual sequence of
  character groups in \eqref{eq:pre-snake} is exact.
  The upper row in \eqref{eq:pre-snake} is exact by Proposition~\ref{prop:middleexact}.
  Now Lemma~\ref{lem:G0alg-grp} and Proposition~\ref{prop:connected}
  imply that $\ker \TrFrob{G^0} =0$ and $\coker \TrFrob{G^0}=0$,
  while Proposition~\ref{prop:etale-iso} gives $\ker \TrFrob{\pi_0(G)} \iso \Hh^0(\Weil{},\Hh^2(\pi_0(\bG),\EEx))$
  and $\coker \TrFrob{\pi_0(G)}=0$.
It now follows from the snake lemma
 \begin{equation}\label{eq:snake}
  \begin{tikzcd}[row sep=20, column sep=15]
    0 \arrow[dotted]{r} & \ker \TrFrob{\pi_0(G)} \arrow[dotted]{r} \dar & \arrow{d} \ker \TrFrob{G} \arrow[dotted]{r}
    & \ker \TrFrob{G^0} =0 \arrow[dotted, out=-10, in=170]{dddll} \dar & \\
    0 \rar & \CSiso{\pi_0(G)} \rar \dar{\TrFrob{\pi_0(G)}}
    & \CSiso{G} \rar \dar{\TrFrob{G}} & \CSiso{G^0} \rar \dar{\TrFrob{G^0}} & 0\\
    0 \rar & \pi_0(G)(\Fq)^* \rar \dar
    & \arrow{d} G(\Fq)^* \rar & G^0(\Fq)^* \rar \dar & 0\\
   &  \coker \TrFrob{\pi_0(G)} =0 \arrow[dotted]{r} & \coker \TrFrob{G} \arrow[dotted]{r} & \coker \TrFrob{G^0} =0 & 
  \end{tikzcd}
  \end{equation}
that $\coker \TrFrob{G} =0$
and $\ker \TrFrob{\pi_0(G)} \to \ker \TrFrob{G}$ is an isomorphism.
This gives the promised short exact sequence
\[
  \begin{tikzcd}
0 \arrow{r} & \Hh^2(\pi_0(\bG),\EEx)^{\Weil{}} \arrow{r} & \CSiso{G} \arrow{r}{\TrFrob{G}} & G(\Fq)^* \arrow{r} & 0. 
  \end{tikzcd}
\]
\end{proof}

\begin{remark}\label{rem:rats}
Although $\TrFrob{\pi_0(G)}$ is split and $\TrFrob{G^0}$ is an isomorphism, we do not know if $\TrFrob{G}$ is split,
in general. Surjectively of $\TrFrob{G}$ shows that every $\ell$-adic character of $G(\Fq)$ admits a geometrization,
but without a splitting for $\TrFrob{G}$ we do not know how to make this geometrization canonical.
\end{remark}

\subsection{Descent, revisited}\label{ssec:revisited}

We now extend Proposition~\ref{prop:bounded-etale} to all smooth commutative group schemes over $\Fq$.

\begin{proposition} \label{prop:bounded}
Let $G$ be a smooth commutative group scheme over $\Fq$. 
Then $\cs{L}\in \CS(G)$ descends to $G$ if and only if $\trFrob{\cs{L}} : G(\Fq) \to \EEx$ has bounded image.
\end{proposition}
\begin{proof} 
By Lemma~\ref{lem:G0alg-grp}, the identity component $G^0$ is a connected algebraic group over $\Fq$. 
It follows from Proposition~\ref{prop:connected} that the restriction of $\cs{L}$ to $G^0$ descends to $G$. 
Also, since $G^0(\Fq)$ is finite, the image of $\trFrob{\cs{L}} : G(\Fq) \to \EEx$ is a finite subgroup and therefore has bounded image.  
If $\chi \in G(\Fq)^*$ then there is some finite-image character $\chi_0$
with the same restriction to $G^0(\Fq)$ since $G^0(\Fq)$ is lies inside the torsion part of
the finitely generated abelian group $G(\Fq)$.  Therefore $\chi$ is bounded 
if and only if $\chi \cdot \chi_0^{-1}$ is bounded.  But $\chi \cdot \chi_0^{-1}$ descends
to a character of $\pi_0(G)$.
Thus, it is enough to prove Corollary~\ref{prop:bounded}
for \'etale group schemes $G$, which is done in Proposition~\ref{prop:bounded-etale}.
\end{proof}

Proposition~\ref{prop:bounded} shows that the full subcategory
$\bCS(G) \subset \CS(G)$ is not an equivalence, for general smooth commutative group schemes $G$.
Again we see the necessity of working with Weil sheaves in Definition~\ref{def:CS}.

\subsection{Morphisms of character sheaves} \label{ssec:CSmor}

\begin{theorem}\label{thm:autornaught}
Let $G$ be a smooth commutative group scheme over $\Fq$.
There is a canonical isomorphism
\[
\Aut(\cs{L}) \iso \Hom(\pi_0(\bG)_{\Weil{}},\EEx).
\]
\end{theorem}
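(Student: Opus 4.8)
The plan is to reduce the general case to the two extreme cases already treated, mirroring the strategy used for Theorem~\ref{thm:snake}. First I would recall from Lemma~\ref{lem:autornaught} that any endomorphism of $\qcs{L}$ is either trivial or an isomorphism, so $\Aut(\qcs{L}) = \End(\qcs{L}) \setminus \{0\}$ as a set, and the abelian group structure on $\Aut(\qcs{L})$ comes from composition. Then I would use the component group sequence: restriction $\iota_0^* : \QC(G) \to \QC(G^0)$ sends $\qcs{L}$ to a quasicharacter sheaf on the connected algebraic group $G^0$, which has trivial automorphism group by Proposition~\ref{prop:connected}(iii). So an automorphism $\rho$ of $\qcs{L}$ restricts to the identity on $\gqcs{L}\vert_{\bG^0}$, which forces $\brho$ to be the identity on each stalk $\gqcs{L}_g$ for $g \in \bG^0$.

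Next I would analyze how such a $\rho$ behaves across the other components. Exactly as in the proof of Proposition~\ref{prop:middleexact}, the isomorphism $\mu$ provides canonical identifications $\varphi_{g,g'} : \gqcs{L}_g \to \gqcs{L}_{g'}$ for $g, g'$ in the same geometric component, compatible with $\mu$ and $\phi$ via diagram~\eqref{eq:qc}. Since $\brho$ is the identity on $\bG^0$-stalks and commutes with the $\mu$'s (condition~\ref{QC.4}), the scalar by which $\brho$ acts on $\gqcs{L}_g$ depends only on the component $x = \pi_0(g) \in \pi_0(\bG)$; call it $c(x) \in \EEx$. The second diagram in~\ref{QC.4} (compatibility with $\mu$) shows $c : \pi_0(\bG) \to \EEx$ is a group homomorphism, and the first diagram (compatibility with $\phi$) shows $c$ is invariant under the $\Weil{}$-action, hence factors through $\pi_0(\bG)_{\Weil{}}$. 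This produces a homomorphism $\Aut(\qcs{L}) \to \Hom(\pi_0(\bG)_{\Weil{}}, \EEx)$, and since composition of automorphisms corresponds to multiplication of the scalars, it is a group homomorphism.

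For surjectivity and injectivity I would argue directly: given any $c \in \Hom(\pi_0(\bG)_{\Weil{}}, \EEx)$, define $\brho$ on the stalk $\gqcs{L}_g$ to be multiplication by $c(\pi_0(g))$; because $c$ is a $\Weil{}$-invariant homomorphism, the two diagrams of~\ref{QC.4} are satisfied (the $\mu$-diagram because $c$ is a homomorphism and $\mu$ lands in a tensor product where the scalars add up correctly, the $\phi$-diagram because $c$ is $\Weil{}$-invariant), so $\brho$ defines an automorphism of $\qcs{L}$ mapping to $c$. Injectivity is immediate: if all the scalars $c(x)$ equal $1$ then $\brho$ is the identity on every stalk, hence $\rho = \id_{\qcs{L}}$. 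The canonicity follows because the construction used no choices beyond those inherent in $G$ and the component structure. The main obstacle I anticipate is bookkeeping rather than conceptual: one must verify carefully that the scalar $\brho$ acts by on $\gqcs{L}_g$ genuinely depends only on $\pi_0(g)$ and not on the choice of $g$ within its component --- this is exactly where the canonicity of the $\varphi_{g,g'}$ maps (equation~\eqref{eq:qc}) and the fact that $\End$ of a connected quasicharacter sheaf is trivial both get used, and it should be spelled out that these two inputs are compatible.
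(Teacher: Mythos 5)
Your proof is correct, but it takes a genuinely different route from the paper's. The paper's proof establishes that the image of $\Aut(\qcs{L}) \to \Hom(\bG_{\Weil{}},\EEx)$ lands inside $\Hom(\pi_0(\bG)_{\Weil{}},\EEx)$ by a topological argument: homomorphisms in the image are continuous on the base of the espace \'etal\'e of $\gqcs{L}$, and since $\ell$ is invertible in $\Fq$ such homomorphisms must factor through $\pi_0$. You instead reduce to the connected case by applying the functor $\iota_0^*$ and invoking Proposition~\ref{prop:connected}(iii) (via Lemma~\ref{lem:G0alg-grp}, which you should cite to know $G^0$ is an algebraic group) to conclude $\brho$ is the identity on $\bG^0$-stalks, then use $\mu$-compatibility to transport that across each component. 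This is an algebraic rather than topological argument, and it fits more neatly with the component-group machinery already developed in Sections 3--4; the paper's approach is arguably more self-contained within Section 4.5. One small correction: your appeal to the identifications $\varphi_{g,g'}$ of Proposition~\ref{prop:middleexact} is not quite right as stated, since those maps were constructed under the hypothesis $\qcs{L}\vert_{G^0} = (\EE)_{G^0}$, which you do not have. Fortunately you don't need them: the computation you want is simply that for $g' = ga$ with $a\in\bG^0$, the stalk-level identity $\mu_{g,a}\circ\brho_{ga} = (\brho_g\otimes\brho_a)\circ\mu_{g,a}$ together with $\brho_a = \id$ forces $\brho_{g'} = \brho_g$ as scalars, and that uses only $\mu$-compatibility of $\brho$, not the $\varphi$'s. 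With that phrasing repaired, the argument is sound. Your surjectivity and injectivity arguments coincide with the paper's.
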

\begin{proof} 
Fix $\cs{L} = (\gcs{L},\mu,\phi)$ and consider the group homomorphism from $\Aut(\cs{L})$ to $\Hom(\bG_{\Weil{}},\EEx)$
defined in the proof of Proposition~\ref{prop:autornaught_etale}.
This homomorphism is injective because morphisms of sheaves are determined by the linear transformations induced on stalks.  
Homomorphisms in the image of $\Aut(\cs{L}) \to \Hom(\bG_{\Weil{}},\EEx)$ are continuous when $\bG$ is
viewed as the base of the espace \'etal\'e attached to $\gcs{L}$.
Since $\ell$ is invertible in $\Fq$, it follows that the image of $\Aut(\cs{L}) \to \Hom(\bG_{\Weil{}},\EEx)$ is contained in $\Hom(\pi_0(\bG_{\Weil{}}),\EEx)$. 
We also have $\pi_0(\bG_{\Weil{}})=\pi_0(\bG)_{\Weil{}}$. 
To see that $\Aut(\cs{L}) \to \Hom(\pi_0(\bG)_{\Weil{}},\EEx)$ is surjective, begin with
$\theta\in\Hom(\pi_0(\bG)_{\Weil{}},\EEx)$ and, for each $[x] \in \pi_0(\bG)_{\Weil{}}$ define
$\bar\rho^y: \gcs{L}^y \to \gcs{L}^y$ by scalar multiplication by $\theta([x])\in \EEx$ for each $y\in [x]$.
This defines an isomorphism $\bar\rho : \gcs{L}\to \gcs{L}$ of local systems on $\bG$ compatible with $\mu$ and $\phi$,
and thus an isomorphism $\rho :\cs{L}\to \cs{L}$ which maps to $\theta$ under $\Aut(\cs{L}) \to \Hom(\pi_0(\bG)_{\Weil{}},\EEx)$.
\end{proof}

\subsection{The dictionary for commutative algebraic groups over finite fields}\label{ssec:alg_groups}

Having extended the function--sheaf dictionary from connected commutative algebraic groups over $\Fq$ to smooth
commutative group schemes $G$ over $\Fq$, we look back briefly to the case when $G$ is a commutative algebraic group. 
Although Weil sheaves are not necessary in that case, the dictionary is still not perfect, generally.

\begin{corollary}\label{cor:alg_groups}
Let $G$ be a commutative algebraic group over $\Fq$.
All character sheaves on $G$ descend to $G$: $\CS(G)$ is equivalent to $\bCS(G)$ (\S \ref{ssec:bS}).
The trace of Frobenius determines a short exact sequence
\[
\begin{tikzcd}
0 \arrow{r} & \Hh^2(\pi_0(\bG),\EEx)^{\Weil{}} \arrow{r} & \bCSiso{G} \arrow{r}{\TrFrob{G}} & G(\Fq)^* \arrow{r} & 0.
\end{tikzcd}
\]
The group $\Hh^2(\pi_0(\bG),\EEx)^{\Weil{}}$ need not be trivial.
\end{corollary}

\begin{proof}
Since $G(\Fq)$ is finite, the first statement follows from Propositions~\ref{prop:BG}
 and \ref{prop:bounded}. The second statement is then a consequence of Theorem~\ref{thm:snake}. 
 The third statement is justified by Example~\ref{eg:H2}.
 \end{proof}

\begin{remark}\label{rem:Lusztig}
Suppose $G$ is a commutative algebraic group over $\Fq$ appearing in the series of papers starting with \cite{lusztig:disconnected1}.
Then $G$ is a commutative reductive algebraic group over $\Fq$ with cyclic component group scheme;
such groups are extensions of finite cyclic groups by algebraic tori.
Every Frobenius-stable character sheaf on $G$, in the sense of \cite{lusztig:disconnected1}, is a character sheaf on $G$, in our sense,
and vice versa.
Moreover, since $\Hh^2(\pi_0(\bG),\EEx)=0$ by Remark~\ref{eg:H2}, it follows that each Frobenius-stable
character sheaf on $G$ as in \cite{lusztig:disconnected1}, is determined by its trace of Frobenius, up to isomorphism. 
\end{remark}

\subsection{Base change}\label{ssec:basechange}

When using character sheaves to study characters, it is useful to understand
how character sheaves behave under change of fields.
Let $k'$ be a finite extension of $k$. Then $k \hookrightarrow k'$ induces a group homomorphism
$i_{k'/k} : G(k) \hookrightarrow G(k')$ and thus a homomorphism
\begin{align*}
i_{k'/k}^* : G(k')^* &\to G(k)^* \\
\chi &\mapsto \chi\circ i_{k'/k}.
\end{align*}
We can interpret this operation on characters in terms of character sheaves:

\begin{proposition} \label{prop:csbe}
Set $G_{k'} \ceq G\times_\Spec{k} \Spec{k'}$ and let
\[
\CS(\Res_{k'/k}(G_{k'})) \xrightarrow{\iota^*} \CS(G)
\]
be the functor obtained by pullback along the canonical closed immersion 
\[\iota : G \hookrightarrow \Res_{k'/k}(G_{k'})\] of $k$-schemes.
The following diagram commutes:
\[
\begin{tikzcd}
\CSiso{\Res_{k'/k}(G_{k'})} \arrow[two heads]{r}{\iota^*} \dar[swap]{\TrFrob{\Res_{k'/k}(G_{k'})}} & \CSiso{G} \dar{\TrFrob{G}} \\
G(k')^* \arrow[two heads]{r}{i_{k'/k}^*} & G(k)^*.
\end{tikzcd}
\]
\end{proposition}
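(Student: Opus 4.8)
The plan is to deduce everything from Lemma~\ref{lem:pullback} and Theorem~\ref{thm:snake}, after setting up the Weil-restriction bookkeeping. First I would check that $\Res_{k'/k}(G_{k'})$ is again a smooth commutative group scheme over $k$: Weil restriction along the finite \'etale morphism $\Spec{k'}\to\Spec{k}$ preserves commutative group schemes (it is a right adjoint) and preserves smoothness (the morphism is finite and flat and $G_{k'}$ is smooth over $k'$), so $\QC(\Res_{k'/k}(G_{k'}))$ is defined and Theorem~\ref{thm:snake} applies to it. I would then recall the standard facts about $\iota$: it is the unit of the adjunction between $-\times_k k'$ and $\Res_{k'/k}$, it is a closed immersion since $k'/k$ is finite, and on $R$-points (for a $k$-algebra $R$) it is the map $G(R)\to G(R\otimes_k k')$ induced by $R\to R\otimes_k k'$; in particular on $k$-points it is exactly $i_{k'/k}\colon G(k)\hookrightarrow \Res_{k'/k}(G_{k'})(k)=G(k')$. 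With this identification, commutativity of the square is immediate: it is Lemma~\ref{lem:pullback} applied to the group scheme homomorphism $\iota$. Surjectivity of $i_{k'/k}^*$ is equally formal: $i_{k'/k}$ is injective and $\EEx$ is a divisible, hence injective, $\ZZ$-module, so $\Hom(-,\EEx)$ carries this injection to a surjection.

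The substantial point is surjectivity of $\iota^*\colon\QCiso{\Res_{k'/k}(G_{k'})}\to\QCiso{G}$, which I would attack using Theorem~\ref{thm:snake} on both sides. Applying it to $G$ and to $\Res_{k'/k}(G_{k'})$ and using the commutativity just established, one obtains a commutative ladder of short exact sequences in which the middle vertical map is $\iota^*$, the right vertical map is $i_{k'/k}^*$, and the left vertical map is the induced map on kernels $\Hh^2(\pi_0(\overline{\Res_{k'/k}(G_{k'})}),\EEx)^{\Weil{}}\to \Hh^2(\pi_0(\bG),\EEx)^{\Weil{}}$ (unwinding the construction of Theorem~\ref{thm:snake}, the kernel identification is natural enough in the group scheme for this to make sense). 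Since $i_{k'/k}^*$ is onto, a diagram chase — the surjectivity half of the four lemma — reduces the surjectivity of $\iota^*$ to surjectivity of this map on kernels.

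This last reduction is where I expect the main obstacle to lie. Over $\bFq$ one has $k'\otimes_k\bFq\cong\bFq^{[k':k]}$, hence $\overline{\Res_{k'/k}(G_{k'})}\cong\bG^{[k':k]}$ with $\Weil{}$ acting by cyclically permuting the factors and simultaneously acting through $\Frob{}$ on each; consequently $\pi_0(\overline{\Res_{k'/k}(G_{k'})})=\pi_0(\bG)^{[k':k]}$, the base change of $\iota$ is the diagonal embedding, and the map to be analyzed is restriction of group cohomology along the diagonal $\pi_0(\bG)\hookrightarrow\pi_0(\bG)^{[k':k]}$, taken on $\Weil{}$-invariants. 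My plan for this step would be to dualize using Lemma~\ref{lem:dual-inv}, turning the problem into injectivity of the diagonal on the $\Weil{}$-coinvariants of second homology, and then to exploit the description of $\pi_0(\bG)^{[k':k]}$ as induced from the index-$[k':k]$ subgroup $\Weil{k'}\subset\Weil{}$, together with a corestriction/transfer argument and the fact that $\Weil{}\cong\ZZ$ has cohomological dimension $1$. Keeping track of the interaction between the permutation action on the $[k':k]$ factors and the Frobenius twist on each is, I expect, the genuinely delicate part of the proof.
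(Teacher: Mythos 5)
Your treatment of commutativity is exactly the paper's: once $\iota$ is identified on $k$-points with $i_{k'/k}$ via $\Res_{k'/k}(G_{k'})(k) \cong G_{k'}(k') \cong G(k')$, the square is a direct instance of Lemma~\ref{lem:pullback}. That is, in fact, the \emph{entirety} of the paper's proof, which says only that the proposition follows from Lemma~\ref{lem:pullback} together with these identifications. So the paper addresses commutativity and nothing else; the two-headed arrows are not supported by any argument in its proof.

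You read the two-headed arrows as part of the claim and spend most of your effort on the surjectivity of $\iota^*$. You are right that this does not come for free from commutativity plus surjectivity of $\TrFrob{}$ and of $i_{k'/k}^*$, and your proposed route — a ladder of the short exact sequences from Theorem~\ref{thm:snake}, the surjective four lemma, then reducing to surjectivity of the induced map on kernels $\Hh^2(\pi_0(\overline{\Res_{k'/k}(G_{k'})}),\EEx)^{\Weil{}} \to \Hh^2(\pi_0(\bG),\EEx)^{\Weil{}}$ — is sensible, as is your identification of $\overline{\Res_{k'/k}(G_{k'})}$ with $\bG^{[k':k]}$ (permutation action twisted by Frobenius, $\bar\iota$ the diagonal). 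But you stop exactly at the step you yourself flag as ``the genuinely delicate part'': nothing is actually established about that kernel map, only a plan (Shapiro plus corestriction plus $\operatorname{cd}(\Weil{})=1$) is sketched. As a proof of the statement you take the proposition to be making, this is a genuine gap. As a proof of what the paper's own proof actually covers, your first paragraph already suffices and matches the paper; everything after it is extra work on a claim the paper neither states in prose nor argues.
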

\begin{proof}
The closed immersion $\iota : G \hookrightarrow \Res_{k'/k}(G_{k'})$ is given by \cite{bosch-lutkebohmert-reynaud:NeronModels}*{\S 7.6}.
Proposition~\ref{prop:csbe} follows immediately from Lemma~\ref{lem:pullback} together with the identifications
\[
\Res_{k'/k}(G_{k'})(k) \cong G_{k'}(k') \cong G(k')
\]
from the definitions of Weil restriction and base change.
\end{proof}

In the opposite direction, let $\Nm : G(k') \to G(k)$ be the norm map and consider the group homomorphism:
\begin{align*}
\Nm^* : G(k)^* &\to G(k')^* \\
\chi &\mapsto \chi\circ \Nm.
\end{align*}
We can also interpret this operation in terms of character sheaves.

If $\cs{L} \ceq (\gcs{L}, \mu, \phi)$ is a character sheaf on $G$, we define
$\cs{L}' \ceq (\gcs{L}, \mu, \phi_{k'})$ on the base change
$G_{k'}$ of $G$ to $k'$ by setting
\[
\phi_{k'} \ceq \phi \circ \Frob{G}^*(\phi) \circ \cdots \circ (\Frob{G}^{n-1})^*(\phi).
\]
The commutativity of the diagram (CS.3) for $\phi_{k'}$
follows from the fact that $\Frob{G_{k'}} = \Frob{G}^n$.
Note that we may also think about the construction of $\phi_{k'}$ from $\phi$
as restricting the action $\varphi$ of $\Weil{k}$ on $\gcs{L}$,
defined in Section~\ref{ssec:category}, to $\Weil{k'}$.

\begin{proposition}\label{prop:basechange}
With notation above,
the rule $\nu_{k'/k}: (\gcs{L}, \mu, \phi) \mapsto (\gcs{L}, \mu, \phi_{k'})$
 defines a monoidal functor $\CS(G) \to \CS(G_{k'})$
 such that the following diagram commutes:
\[
\begin{tikzcd}[column sep=60]
\CSiso{G} \rar{\nu_{k'/k}} \dar{\TrFrob{G}} & \CSiso{G_{k'}} \dar{\TrFrob{G_{k'}}} \\
G(k)^*  \rar{\Nm^*} & G(k')^*.
\end{tikzcd}
\]
\end{proposition}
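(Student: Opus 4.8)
The plan is to establish, in turn, that $\nu_{k'/k}$ is a functor, that it is monoidal, and that the square commutes; the first two are formal, and the third is the substance of the proposition. On objects, conditions \ref{QC.1} and \ref{QC.2} involve only $\gqcs{L}$ and $\mu$, which $\nu_{k'/k}$ leaves untouched, while \ref{QC.3} for $\phi_{k'}$ was observed above to follow from $\Frob{G_{k'}} = \Frob{G}^n$. On morphisms, if $\brho$ is a morphism $\qcs{L} \to \qcs{L}'$ as in \ref{QC.4}, then I would check that the same underlying map of sheaves is a morphism $\nu_{k'/k}(\qcs{L}) \to \nu_{k'/k}(\qcs{L}')$: the $\mu$-compatibility square of \ref{QC.4} is unchanged, and the $\phi_{k'}$-compatibility square is obtained by applying $(\Frob{G}^i)^*$ to the $\phi$-compatibility square for $i = 0, \dots, n-1$ and composing, using functoriality of pullback. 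Monoidality is equally formal, since pullback and composition of maps of $\ell$-adic sheaves commute with $\otimes$: one gets $(\phi_1 \otimes \phi_2)_{k'} = (\phi_1)_{k'} \otimes (\phi_2)_{k'}$, hence $\nu_{k'/k}(\qcs{L}_1 \otimes \qcs{L}_2) = \nu_{k'/k}(\qcs{L}_1) \otimes \nu_{k'/k}(\qcs{L}_2)$ on the nose.

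For the square, because trace of Frobenius depends only on isomorphism classes it suffices to prove the pointwise identity $\trFrob{\nu_{k'/k}(\qcs{L})}(g) = \trFrob{\qcs{L}}(\Nm(g))$ for every quasicharacter sheaf $\qcs{L}$ on $G$ and every $g \in G(k') = G_{k'}(k')$. When $G$ is a connected algebraic group this is the classical compatibility of the function--sheaf dictionary with scalar extension: using Lemma~\ref{lem:bounded_connected} and Proposition~\ref{prop:connected} one reduces $\qcs{L}$ to a character $\psi$ of $G(k)$, and the formula defining $\phi_{k'}$ is precisely the restriction to $\Weil{k'}$ of the Weil--group character attached to $(\gqcs{L},\phi)$, which corresponds under the dictionary to $\psi \circ \Nm$. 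It thus remains to treat \'etale $G$ (for general $G$ one combines the \'etale and connected cases through the component sequence \eqref{eq:pi0}, using that $\nu_{k'/k}$ commutes with $\pi_0^*$ and $\iota_0^*$, these maps being defined over $k$).

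So suppose $G$ is \'etale and fix $g \in G(k')$ with geometric point $\bg$; put $g_i \ceq \Frob{G}^i(\bg)$ for $0 \le i \le n-1$, so that $\Frob{G}(g_i) = g_{i+1}$ with indices read modulo $n$, and $\Nm(g)$ has geometric point $N \ceq g_0 + \dots + g_{n-1}$. The stalk of $(\Frob{G}^i)^*\phi$ at $\bg$ is the stalk map $\phi_{g_i} \colon \gqcs{L}_{g_{i+1}} \to \gqcs{L}_{g_i}$, and since $\bg$ is fixed by $\Frob{G}^n = \Frob{G_{k'}}$ it follows that the stalk of $\phi_{k'}$ at $\bg$ is the composite $\phi_{g_0}\circ\phi_{g_1}\circ\cdots\circ\phi_{g_{n-1}}$; choosing bases $v_x$ for the stalks and using the cochains $a, b$ of \S\ref{ssec:S}, this gives $\trFrob{\nu_{k'/k}(\qcs{L})}(g) = \prod_{i=0}^{n-1} b(g_i)$. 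On the other hand, iterating $\mu$ --- condition \ref{QC.2} guarantees the iterate is independent of the bracketing --- identifies $\gqcs{L}_{N}$ with $\gqcs{L}_{g_0} \otimes \cdots \otimes \gqcs{L}_{g_{n-1}}$, and running this identification through condition \ref{QC.3} in the form \eqref{nohom} expresses $\trFrob{\qcs{L}}(\Nm(g)) = b(N)$ as $\big(\prod_{i=0}^{n-1} b(g_i)\big)\cdot\prod_{j=1}^{n-1}\omega(g_0, g_j)^{-1}$, where $\omega(x,y) \ceq a(x,y)/a(y,x)$ is the commutator pairing carried by $\mu$. Hence the identity to be proved amounts to the vanishing $\prod_{j=1}^{n-1}\omega(g_0, g_j) = \omega(g_0, N) = 1$, and this is the step I expect to be the main obstacle. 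I would attack it using two structural facts about $\omega$: it is alternating, and it is $\Frob{G}$-invariant --- the latter because \eqref{nohom} realizes $\Frob{G}^{*}a / a$ as a coboundary, whose commutator pairing is trivial. Combining these --- pairing the index $j$ with $n-j$ via $\Frob{G}$-invariance and the alternating property, and separately controlling the self-paired index when $n$ is even --- should yield $\omega(g_0, N) = 1$, which completes the \'etale case and, together with the reduction of the previous paragraph, the proof.
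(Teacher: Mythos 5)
Your analysis lands on exactly the point that the paper's own proof elides. The paper applies the iterated form of \ref{QC.3} at the stalk of $\gqcs{L}^{\boxtimes n}$ over $(x, \Frob{}(x), \ldots, \Frob{}^{n-1}(x))$, producing a square whose top arrow is the iterated $\mu$ at the cyclic translate $(\Frob{}(x),\ldots,\Frob{}^{n-1}(x), x)$ and whose bottom arrow is the iterated $\mu$ at $(x,\Frob{}(x),\ldots,\Frob{}^{n-1}(x))$. The sentence ``By \ref{QC.2}, $v$ maps to $v_1\otimes \cdots \otimes v_{n-1}\otimes v_0$ along the top'' is exactly the claim that the iterated multiplier of $\mu$ is invariant under cyclic permutation of its arguments, which in your cochain language is $\omega(g_0, N)=1$. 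Condition \ref{QC.2} gives only independence of bracketing, not cyclic invariance, so the paper's sentence and the last step of your outline both rest on the vanishing of the same commutator-pairing expression; your instinct that this is the main obstacle is correct, and the paper does not address it.

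That vanishing in fact fails, so the gap cannot be closed. Your pairing of $j$ with $n-j$ handles odd $n$, and for even $n$ it yields only $\omega(g_0, g_{n/2})^2 = 1$; the value $-1$ occurs. Take $k'/k$ quadratic ($n=2$), let $G$ be the finite \'etale group scheme with $\bG = \ZZ/4\oplus\ZZ/4$ and Frobenius acting by the coordinate swap, and take $\qcs{L}$ given on stalks by the cochains $a\bigl((p_1,q_1),(p_2,q_2)\bigr) = (-1)^{p_1 q_2}$ and $b\bigl((p,q)\bigr) = (-1)^{pq}$, which one readily checks satisfy \eqref{2-cocyle} and \eqref{nohom}. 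Then $\trFrob{\nu_{k'/k}\qcs{L}}$ is the trivial character of $G(k')=\bG$, since $b(p,q)\,b(q,p)=(-1)^{2pq}=1$, whereas $\trFrob{\qcs{L}}\bigl(\Nm(p,q)\bigr)=b(p+q,p+q)=(-1)^{(p+q)^2}=(-1)^{p+q}$ is the nontrivial character; concretely $\omega\bigl((1,0),(0,1)\bigr)=-1$. So the square in the proposition does not commute for this $\qcs{L}$, and the step you flagged as an obstacle marks a genuine gap in the published argument as well, not only in your outline.
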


\begin{proof}
Let $\cs{L} \ceq (\gcs{L}, \mu, \phi) \in \CS(G)$ and write $F$ for $\Frob{G}$.  For any $x \in G(k')$,
we may compute the value of $\TrFrob{G_{k'}}(\nu_{k'/k}\cs{L})(x)= t_{\nu_{k'/k}\cs{L}}(x)$ as the trace of $\phi_{k'}$ on $\gcs{L}_x$,
and the value of $\Nm^*(\TrFrob{G}(\cs{L}))(x)$ as the trace of $\phi$ on $\gcs{L}_{\Nm(x)}$.
Applying \ref{CS.3} to the stalk of $\gcs{L}^{\boxtimes n}$ at the point $(x, \Frob{}(x), \ldots, \Frob{}^{n-1}(x))$ yields a diagram
\[
\begin{tikzcd}
\gcs{L}_{\Nm(x)} \rar \dar{\phi_{\Nm(x)}} & \gcs{L}_{F(x)} \otimes \gcs{L}_{F^2(x)} \otimes \cdots \otimes \gcs{L}_x
\dar{\phi_x \otimes (F^*\phi)_x \otimes \cdots \otimes ((F^{n-1})^*\phi)_x} \\
\gcs{L}_{\Nm(x)} \rar & \gcs{L}_x \otimes \gcs{L}_{F(x)} \otimes \cdots \otimes \gcs{L}_{F^{n-1}(x)}.
\end{tikzcd}
\]
Choose a basis vector $v$ for $\gcs{L}_{\Nm(x)}$ and write $v_0 \otimes v_1 \otimes \cdots \otimes v_{n-1}$ for the image of $v$ under the
bottom map,
for $v_i \in \gcs{L}_{\Frob{}^i(x)}$.  By \ref{CS.2}, $v$ maps to
$v_1 \otimes v_2 \otimes \cdots \otimes v_0$ along the top of the diagram.
Let $\alpha_i \in \EEx$ represent $((F^i)^*\phi)_x$ with respect to these bases and let $\alpha$ be
the trace of $\phi_{\Nm(x)}$.  We may now equate the trace $\alpha$ of $\phi$ on $\gcs{L}_{\Nm(x)}$
with the product $\alpha_0 \cdots \alpha_{n-1}$, which is the trace of $\phi_{k'}$ on $\gcs{L}_x$.
\end{proof}

Finally, let $G'$ be a smooth commutative group scheme over $k'$.
We explain how to geometrize the canonical isomorphism between characters of $G'(k')$ and of $(\Res_{k'/k}G')(k)$.
We may decompose the base change $(\Res_{k'/k}G')_{k'}$ of $\Res_{k'/k}G'$ to $k'$
into a product of copies of $G'$, indexed by elements of $\Gal(k'/k)$:
\[
(\Res_{k'/k}G')_{k'} \cong \prod_{\Gal(k'/k)} G'.
\]
Since products and coproducts agree for group schemes we have a natural inclusion of $k'$-schemes
\[
G' \hookrightarrow (\Res_{k'/k}G')_{k'},
\]
mapping $G'$ into the summand corresponding to $1 \in \Gal(k'/k)$.  Composing $\nu_{k'/k}$
from Proposition~\ref{prop:basechange} with pullback along this map yields a functor
\[
\rho : \CS(\Res_{k'/k}G') \to \CS(G').
\]

\begin{proposition}
Let $k'/k$ be a finite extension and let $G'$ be a smooth commutative group scheme over $k'$.
Then the functor 
\[
\rho : \CS(\Res_{k'/k}G') \to \CS(G'),
\]
defined above, induces
\[
\begin{tikzcd}
\CSiso{\Res_{k'/k} G'} \dar{\TrFrob{\Res_{k'/k} G'}} \rar{\rho} & \CSiso{G'} \dar{\TrFrob{G'}}\\
G'(k')^* \rar & G'(k')^*,
\end{tikzcd}
\]
where the bottom map is the identity.
\end{proposition}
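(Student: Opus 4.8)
The plan is to reduce the commutativity of the square to a single identity on $k'$-points and then verify that identity by unwinding the definition of Weil restriction.

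First I would recall that, by construction, $\rho = j^* \circ \nu_{k'/k}$, where $\nu_{k'/k} \colon \QC(\Res_{k'/k}G') \to \QC\big((\Res_{k'/k}G')_{k'}\big)$ is the functor of Proposition~\ref{prop:basechange} applied to the smooth commutative group scheme $\Res_{k'/k}G'$ over $k$, and $j^*$ is pullback along the closed immersion $j \colon G' \hookrightarrow (\Res_{k'/k}G')_{k'}$ onto the summand indexed by $1 \in \Gal(k'/k)$. Pasting the commutative square of Proposition~\ref{prop:basechange} with the one from Lemma~\ref{lem:pullback} applied to $j$ (over the base field $k'$), the map $\QCiso{\Res_{k'/k}G'} \xrightarrow{\rho} \QCiso{G'}$ fits into a commutative square whose bottom edge is $(\Nm \circ j)^* \colon (\Res_{k'/k}G')(k)^* \to G'(k')^*$; here $\Nm \colon (\Res_{k'/k}G')(k') \to (\Res_{k'/k}G')(k)$ is the norm map and $j \colon G'(k') \to (\Res_{k'/k}G')(k')$ is the homomorphism induced by the closed immersion. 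Under the canonical identification $(\Res_{k'/k}G')(k) \iso G'(k')$ (built into the diagram of the proposition), it therefore suffices to prove that $\Nm \circ j = \id_{G'(k')}$.

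Next I would compute $\Nm \circ j$ using the standard description of Weil restriction. Since $k'/k$ is separable, a choice of isomorphism $k' \otimes_k k' \iso \prod_{\sigma \in \Gal(k'/k)} k'$ identifies $(\Res_{k'/k}G')(k') = G'(k' \otimes_k k')$ with $\prod_{\sigma \in \Gal(k'/k)} G'(k')$. Under this identification the canonical map $(\Res_{k'/k}G')(k) = G'(k') \to (\Res_{k'/k}G')(k')$ is the diagonal, the map $j$ sends $x$ to the tuple whose only nontrivial component is $x$ in the factor $\sigma = 1$, and $\tau \in \Gal(k'/k)$ acts on $\prod_\sigma G'(k')$ by permuting the indexing set $\Gal(k'/k)$ simply transitively, without altering the value stored in a component; see \cite{bosch-lutkebohmert-reynaud:NeronModels}*{\S 7.6}. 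Hence the translates ${}^\tau(j(x))$, as $\tau$ ranges over $\Gal(k'/k)$, are supported in pairwise distinct components, each carrying the value $x$, so $\Nm(j(x)) = \prod_{\tau \in \Gal(k'/k)} {}^\tau(j(x))$ is the diagonal tuple with every component equal to $x$. That diagonal tuple is precisely the image of $x$ under the canonical map from $(\Res_{k'/k}G')(k)$, so $\Nm \circ j$ is the identity once $(\Res_{k'/k}G')(k)$ is identified with $G'(k')$; dualizing gives $(\Nm \circ j)^* = \id_{G'(k')^*}$, which is the assertion of the proposition.

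I expect the main obstacle to be purely bookkeeping: carefully matching the three incarnations of $G'(k')$ that occur (the source of $j$, the image of the canonical map out of $(\Res_{k'/k}G')(k)$, and the target $G'(k')^*$ of the bottom arrow), and checking that the Galois action on $\prod_\sigma G'(k')$ really does permute the components with no cocycle twist. Both points are routine once a fixed isomorphism $k' \otimes_k k' \iso \prod_\sigma k'$ is in hand, but they are exactly where sign-type errors would most easily creep in, so I would write the convention down explicitly before running the computation.
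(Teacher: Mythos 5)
Your proof is correct and follows essentially the same route as the paper: factor $\rho$ as $j^* \circ \nu_{k'/k}$, invoke Lemma~\ref{lem:pullback} and Proposition~\ref{prop:basechange} to reduce to the statement that $\Nm \circ j$ is the canonical identification $G'(k') \iso (\Res_{k'/k}G')(k)$, and then verify this using the decomposition $(\Res_{k'/k}G')_{k'} \iso \prod_{\Gal(k'/k)} G'$ with Galois permuting the factors. The only difference is that you spell out the coordinate bookkeeping that the paper leaves implicit.
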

\begin{proof}
By Lemma~\ref{lem:pullback} the pullback part of the definition of $\rho$ corresponds to the map
\[
(\Res_{k'/k}G')(k')^* \to G'(k')^*
\]
induced by $g \mapsto (g, 1, \ldots, 1)$.  Since the action of $\Gal(k'/k)$ on
\[
(\Res_{k'/k}G')_{k'} \cong \prod_{\Gal(k'/k)} G'
\]
is given by permuting coordinates, composition with the norm map yields the identity on $G'(k')$.
\end{proof}

\section{Quasicharacter sheaves for \texorpdfstring{$p$}{p}-adic tori}\label{sec:applications}

Let $K$ be a local field with ring of integers $R$ and finite residue field $\Fq$; in this section we denote the group $\Weil{}$ by $\Weil{\Fq}$.
We continue to assume that $\ell$ is invertible in $\Fq$.

\subsection{N\'eron models}\label{ssec:neron}
We will consider connected commutative algebraic groups over $K$ that admit a N\'eron model,
by which we mean a locally finite type N\'eron model.
By \cite{bosch-lutkebohmert-reynaud:NeronModels}*{\S 10.2, Thm 2}, these are precisely the connected
commutative algebraic groups over $K$ that contain no subgroup isomorphic to $\mathbb{G}_\text{a}$.
Write $\mathcal{N}_K$ for the full subcategory of the category of algebraic groups consisting of such objects.
This category is additive, and includes all algebraic tori over $K$, abelian varieties over $K$ and unipotent $K$-wound groups.
We write $\mathcal{N}$ for the category of N\'eron models that arise in this way; in particular, $\mathcal{N}$
is a full subcategory of the category of smooth commutative group schemes over $R$.

\begin{example}
If $T_K = \Gm{K}$, then a N\'eron model can be obtained by gluing copies of $\Gm{R}$ (one for each $n \in \ZZ$) along their generic fibers,
via the gluing morphisms $\Spec{\ZZ[t, t^{-1}]} \to \Spec{\ZZ[t, t^{-1}]}$ defined by $t \mapsto \pi^n t$
\cite{bosch-lutkebohmert-reynaud:NeronModels}*{\S 10.1, Ex. 5}.

Suppose that $K'/K$ is a quadratic extension and $T_K = U_1(K'/K)$ is the unitary group.
When $K'/K$ is unramified, the N\'eron model of $T_K$ is a form of the N\'eron model for $\Gm{K}$, with the nontrivial automorphism
$\sigma \in \Gal(K'/K)$ mapping $(x, n)$ to $(\sigma(x), -n)$ for $x \in R'^\times$ and $n \in \ZZ$ specifying the copy of $\Gm{R'}$.
This example illustrates the compatibility between N\'eron models and unramified base change
\cite{bosch-lutkebohmert-reynaud:NeronModels}*{\S 10.1, Prop. 3}.

On the other hand, if $K' = K(\sqrt{\pi})$ is totally ramified over $K$ then the N\'eron model of $U_1(K'/K)$ is affine, namely
$\Spec{R[x,y] / (x^2 - \pi y^2 - 1)}$.  The special fiber is the the disjoint union of two affine lines.

Finally, we remark that if $K'/K$ is any finite extension of local fields and $X'$ is a N\'eron model for $X_{K'}$ then
$\Res_{R'/R}(X')$ is a N\'eron model for $\Res_{K'/K}(X_{K'})$ \cite{bosch-lutkebohmert-reynaud:NeronModels}*{\S 7.6, Prop. 6}.
\end{example}

\subsection{Quasicharacters}\label{ssec:quasicharacters}

Write $\m$ for the maximal ideal of $R$ and set $R_n = R/\m^{n+1}$ for every non-negative integer $n$.
Suppose $X \in \mathcal{N}$.
Note that $X(K) = X(R)$.
A \emph{quasicharacter of $X(K)$} is a group homomorphism $X(K) \to \EEx$ that factors through
$X(R) \to X(R_n)$ for some non-negative integer $n$.
We note that this definition is compatible with \cite{cassels-frohlich:AlgebraicNumberTheory}*{Ch XV, \S 2.3}.
The group of quasicharacters of $X(K)$ will be denoted by 
$\Hom_\text{}(X(K),\EEx)$
 and the subgroup of those that factor through $X(R_n)$ will be denoted by $\Hom_n(X(K),\EEx)$.
In this section we will see how to geometrize and categorify quasicharacters of $X(K)$ using character sheaves.

\subsection{Review of the Greenberg transform} \label{ssec:rev_Greenberg}

Let $K$, $R$ and $R_n$ be as above.
For each $n \in \NN$, the Greenberg functor maps schemes over $R_n$ to schemes over $\Fq$.
See \cite{bertapelle-gonzales:Greenberg} for the definition and fundamental properties of the
Greenberg functor as we use it; other useful references include
\cite{greenberg:61}, \cite{greenberg:63a},
\cite{demazure-gabriel:GroupesAlgebriques}*{V, \S 4, no. 1},
\cite{bosch-lutkebohmert-reynaud:NeronModels}*{Ch. 9, \S 6} and
\cite{nicaise-sebag:motivicSerre}*{\S 2.2}.
For any non-negative integer $n$ we will write
\[
\Gr^R_n : \Sch{R} \to \Sch{\Fq}
\]
for the functor produced by composing pullback along $\Spec{R_n} \to \Spec{R}$ with the Greenberg functor. 
This functor respects open immersions, closed immersions, \'etale morphisms, smooth morphisms and
geometric components.  Moreover, there is a canonical isomorphism
\[
\Gr^R_n(X)(\Fq) \iso X(R_n)
\]
for any scheme $X$ over $R$.

For any $n\leq m$,  the surjective ring homomorphism $R_{m} \to R_n$ determines a
natural transformation 
\[
\varrho^R_{n\leq m} : \Gr^R_{m} \to \Gr^R_n
\]
between additive functors.
Crucially, $\varrho^R_{n\leq m}(X): \Gr^R_{m}(X)\to \Gr^R_n(X)$ is an affine morphism of $\Fq$-schemes,
for every $R$-scheme $X$ and every $n\leq m$ \cite{bertapelle-gonzales:Greenberg}*{Prop 4.3}.
This observation is key to the proof that, for any scheme $X$ over $R$, the projective limit 
\[
\Gr_R(X) \ceq \varprojlim_{n\in \NN} \Gr^R_n(X),
\]
taken with respect to the surjective morphisms $\varrho^R_{n\leq m}(X) : \Gr^R_{m}(X) \to \Gr^R_n(X)$,
exists in the category of group schemes over $\Fq$;
see \cite{EGAIV3}*{\S 8.2}.
This leads to the definition of the {\it Greenberg transform}:
\[
\Gr_R : \Sch{R} \to \Sch{\Fq}.
\]
By construction, the $\Fq$-scheme $\Gr_R(X)$ comes equipped with morphisms 
\[
\varrho^R_n(X) : \Gr^R(X) \to \Gr^R_n(X),\qquad \forall n\in \NN.
\]

\subsection{Quasicharacter sheaves}
\label{ssec:CS_on_GN}
 
Set $S = \Spec{R}$ and $S_n = \Spec{R_n}$;
note that $S_0 = \Spec{\Fq}$ is the special fibre of $S$.
Let $X$ be a smooth commutative group scheme over $S$.
For every non-negative integer $n$, the Greenberg transform $\Gr^R_n(X)$ is a {\it smooth} commutative group scheme over $S_0$.
The Greenberg transform $\Gr_R(X)$ of $X$ is a commutative group scheme over $\Fq$
with $\Fq$-rational points $X(R)$.
The morphism of $\Fq$-schemes $\varrho^R_n(X) : \Gr_R(X) \to \Gr^R_n(X)$ induces a functor
\[
\varrho^R_n(X)^* : \CS(\Gr^R_n(X)) \to \CS(\Gr_R(X)),
\]
as in Lemma~\ref{lem:pullback}.

\begin{definition}\label{def:QCS}
Let $X$ be a smooth group scheme over $R$.
A {\it quasicharacter sheaf} of $X$ is a triple 
\[
\cs{F} \ceq (n, \{\cs{F}_i\}_{n\leq i}, \{\alpha_{i \le j}\}_{n\le i \le j}),
\] 
where $n$
is a non-negative integer, each $\cs{F}_i$ is a character sheaf on $\Gr^R_i(X)$ and each 
\[
\alpha_{i \le j} : \cs{L}_j \to \varrho^R_{i \le j}(X)^* \cs{L}_i
\]
 is an isomorphism; here $\alpha_{i \le i}$ is the identity and the $\alpha_{i \le j}$ are compatible with each other.  
If $\cs{F} \ceq (n, \{\cs{F}_i\}, \{\alpha_{i \le j}\})$
and $\cs{F}' \ceq (m, \{\cs{F}'_i\}, \{\alpha'_{i \le j}\})$ are objects then $\Hom(\cs{F}, \cs{F}')$ is the set
of equivalence classes of pairs $(k, \{\beta_i\}_{k \le i})$, where $n,m \le k$ and the $\beta_i : \cs{F}_i \to \cs{F}'_i$ are
morphisms of character sheaves so that
\[
\begin{tikzcd}
\cs{F}_j \rar{\alpha_j} \dar{\beta_i} & \varrho^R_{i \le j}(X)^* \cs{F}_i \dar{f_{i \le j}^*\beta_j} \\
\cs{F}'_j \rar{\alpha_j} & \varrho^R_{i \le j}(X)^* \cs{L}'_i
\end{tikzcd}
\]
commutes for all $k\le i\le j$; we identify two such pairs $(k, \{\beta_i\})$ and $(l, \{\gamma_i\})$ if $\beta_i = \gamma_i$
for sufficiently large $i$.  Identities and composites are defined in the natural way.
Let $\QCS(X)$ denote the category of quasicharacter sheaves for $X$.
\end{definition}

\begin{remark}
If $\varrho^R_n(X)^* : \CS(\Gr^R_n(X)) \to \CS(\Gr_R(X))$ is full then the construction above can be improved
by forming $\QCS(X)$ from the essential images of the functors $\varrho^R_n(X)^*$; however, we do not know if $\varrho^R_n(X)^*$ is full.
\end{remark}

\begin{remark}
We offer the following alternate construction of $\QCS(X)$.
As above, let $X$ be a smooth group scheme over $R$.
Although $\Gr_R(X)$ is not locally of finite type and therefore not smooth,
let us consider the rigid monoidal category $\CS(\Gr_R(X))$ as defined in Section~\ref{ssec:category},
though without insisting that the commutative group $\Fq$-scheme $G$ is smooth.
A quasicharacter sheaf for $X$ is an object of
the following rigid monoidal subcategory of $\CS(\Gr_R(X))$, denoted by $\QCS(X)$:
\begin{enumerate}
\item
objects in $\QCS(X)$ are the $\ell$-adic sheaves $\varrho^R_n(X)^*\cs{L}$, for $n\in \NN$ and $\cs{L} \in \CS(\Gr^R_n(X))$; 
\item
morphisms $\varrho^R_n(X)^*\cs{L} \to \varrho^R_m(X)^*\cs{L}'$ in $\QCS(X)$ are those morphisms
in $\CS(\Gr_R(X))$ which take the form $\varrho^R_m(X)^*\rho$ for $\rho \in \Hom(\varrho^R_{n\leq m}(X)^*\cs{L},\cs{L}')$
when $n\leq m$, and $\varrho^R_n(X)^*\rho$ for $\rho \in \Hom(\cs{L},\varrho^R_{m\leq n}(X)^*\cs{L}')$ when $m\leq n$.
\end{enumerate}
\end{remark}

\begin{theorem}\label{thm:CSXK}
Let $K$ be a local field with residue field $\Fq$, in which $\ell$ is invertible; 
let $R$ be the ring of integers of $K$.
\begin{enumerate}
\labitem{(0)}{x0}
The trace of Frobenius provides a natural transformation between the additive functors
\[
X \mapsto \QCSiso{X}
\qquad\text{and}\qquad
X \mapsto \Hom_{\text{}}(X(K),\EEx)
\]
as functors from $\mathcal{N}$ to the category of commutative groups.
\end{enumerate} 
Regarding this natural transformation, for every $X \in \mathcal{N}$:
\begin{enumerate}[resume]
\labitem{(1)}{x1} there is a canonical short exact sequence of commutative groups 
\[
0 \to \Hh^2(\pi_0(X)_{\bFq},\EEx)^{\Weil{\Fq}} \to \QCSiso{X} \to \Hom_\text{}(X(K),\EEx) \to 0;
\] 
\labitem{(2)}{x2} for all quasicharacter sheaves $\cs{F}$, $\cs{F}'$ on $\Gr_R(X)$, and for every
$\rho \in \Hom(\cs{F},\cs{F}')$, either $\rho$ is trivial or $\rho$ is an isomorphism;
\labitem{(3)}{x3} for all quasicharacter sheaves $\cs{F}$ for $X$, there is a canonical isomorphism
\[
\Aut(\cs{F}) \iso \Hom((\pi_0(X)_{\bFq})_{\Weil{\Fq}},\EEx)
\]
\end{enumerate}
\end{theorem}

\begin{proof}
To prove \ref{x0}, use: Proposition~\ref{prop:functorialG} with $G = \Gr^R_n(X)$; the fact that
N\'eron models are unique up to isomorphism; the fact that every $\CS(\Gr^R_n(X))$ is a full subcategory
of $\QCS(X)$; and the observation that every object in $\QCS(X)$ is in the essential image of $\CS(\Gr^R_n(X))$ for some $n$.
To prove \ref{x1}, use Theorem~\ref{thm:snake} with $G = \Gr^R_n(X)$ and then argue as in part \ref{x0}.
To prove \ref{x2}, argue as in the proof of Lemma~\ref{lem:autornaught}.
To prove \ref{x3}, use: the fact that the component group of $\Gr^R_n(X)$ is independent of $n$;
Theorem~\ref{thm:autornaught} with $G = \Gr^R_n(X)$, in which case $\pi_0(G) = \pi_0(X \times_S S_0)$
and $\pi_0(\bG) = \pi_0(X)_{\bFq}$; then argue as in part \ref{x0}.
\end{proof}

 \begin{remark}
In Section~\ref{ssec:CS_tori} we see that the \'etale site on $\Gr_R(X)$ is rich enough to geometrize all
quasicharacters of $X(K)$ as $\ell$-adic local systems on $\Gr_R(X)$, where $X$ is a N\'eron model for an
algebraic torus or an abelian variety over a local field $K$. 
It is natural to ask if the \'etale site on the generic fibre $X_K$ would have sufficed.
This seems unlikely, since the geometric \'etale fundamental group of $\Gm{K}$ is ${\hat \ZZ}$;
however, limited results in this direction were established in \cite{cunningham-kamgarpour:13a} when $K = \mathbb{Q}_p$.
\end{remark}

\subsection{Quasicharacter sheaves for \texorpdfstring{$p$}{p}-adic tori} \label{ssec:CS_tori}

As we explained in the Introduction, our original motivation for this paper was to find a
geometrization of quasicharacters of $p$-adic tori. 
This is now provided by the following adaptation of Theorem~\ref{thm:CSXK} in the case when
$T\in \mathcal{N}$ is a N\'eron model for an algebraic torus over $K$.

\begin{corollary}\label{cor:CS_tori}
Let $T$ be a N\'eron model for an algebraic torus over $K$.
The following is a commutative diagram of exact sequences.
\[
  \begin{tikzcd}[row sep=20, column sep=18]
{}  & 0 \arrow{d} & 0 \arrow{d} &  & \\ 
   & \Hh^2(X_*(T)_{\mathcal{I}_K},\EEx)^{\Weil{\Fq}}  \arrow{d} & \arrow{d} \Hh^2(X_*(T)_{\mathcal{I}_K},\EEx)^{\Weil{\Fq}} & 0 \arrow{d} & \\
    0 \rar & \QCSiso{\pi_0(T)} \arrow{r}{\pi_0^*} \dar{\TrFrob{\pi_0(T)}}
    & \QCSiso{T} \arrow{r}{\iota_0^*}  \arrow{d}{\TrFrob{\Gr_R(T)}} & \QCSiso{T^0} \arrow{r} \arrow{d}{\TrFrob{\Gr_R(T)^0}} & 0\\
    0 \arrow{r} & \Hom(\pi_0(T)(\Fq), \EEx) \arrow{r}{\text{inf}} \arrow{d}{\iso}
    & \arrow{d}{\iso} \Hom_\text{}(T(K),\EEx) \arrow{r}{\text{res}} & \Hom_\text{}(T^0(R),\EEx) \arrow{r} \arrow{d}{\iso} & 0\\
 &  0  & 0 & 0 & 
  \end{tikzcd}
 \]
 \end{corollary}
\begin{proof} 
The horizontal sequence of groups coming from categories of quasicharacter sheaves is exact by
Proposition~\ref{prop:middleexact}, together with the observation that the functors $\pi_0^*$ and $\iota^*$ preserve limits.
It is elementary that the horizontal sequence of quasicharacters is exact.
Accordingly, by Theorem~\ref{thm:CSXK}, the kernel of the  is $\Hh^2(\pi_0(T)_{\bFq},\EEx)^{\Weil{}}$. 
By \cite{bitan:discriminant}*{Eq 3.1}, the special fibre of the component group scheme for $T$ is given by
\[
 \pi_0(T)_{\bFq} = X_*(T)_{\mathcal{I}_K},
\]
where $X_*(T)$ is the cocharacter lattice of $T_K$ and $\mathcal{I}_K$ is the inertia group for $K$.
Thus,
\[
\Hh^2(\pi_0(T)_{\bFq},\EEx)^{\Weil{\Fq}} = \Hh^2(X_*(T)_{\mathcal{I}_K},\EEx)^{\Weil{\Fq}}.
\] 
Thus, the middle vertical sequence is exact.
Since $T^0$ and $\pi_0(T)$ do not lie in $\mathcal{N}$, we cannot use Theorem~\ref{thm:CSXK} to
determine the image and trace of Frobenius for these schemes. 
Instead, we observe that $T^0$ and $\pi_0(T)$ are smooth commutative group schemes over $R$,
so Definition~\ref{def:QCS} gives meaning to categories $\QCS(T^0)$ and $\QCS(\pi_0(T))$, and,
moreover, that $\pi_0(\Gr^R_n(T^0))= \pi_0(T^0)_{\Fq} = 1$ and $\pi_0(\Gr^R_n(\pi_0(T)) = \pi_0(T)_{\Fq}$ are both independent of $n$.
It follows that the vertical sequences through $\QCSiso{T^0}$ and $\QCSiso{\pi_0(T)}$ are exact by
Theorem~\ref{thm:snake} and Definition~\ref{def:QCS}.
The diagram commutes by Lemma~\ref{lem:pullback}.
\end{proof}

\begin{example}
When $T_K = \Gm{K}$ or $T_K = U_1(K'/K)$, the geometric component group $\pi_0(T)_{\bar{k}}$ is cyclic,
so $\TrFrob{\Gr_R(T)}$ is an isomorphism.  Conversely, when $T_K = \Gm{K}^2$ then
\[
\Hh^0(\Weil{\Fq}, \Hh^2(\pi_0(T), \EEx)) = \EEx,
\]
and there are uncountably many invisible quasicharacter sheaves for $T$.

We may also give examples of tori whose N\'eron models have component groups appearing in Example \ref{eg:H2}.
Let $L = K'(\sqrt{\pi})$ be a quadratic ramified extension of $K'$.  When $K = K'$ and $T_K = U_1(L/K) \times U_1(L/K)$,
the component group $\pi_0(T)_{\bar{k}}$ is $\ZZ/2\ZZ \times \ZZ/2\ZZ$ with trivial Frobenius action.  When $K'/K$ is an unramified
quadratic extension and $T_K = \Res_{K'/K} U_1(L/K')$, then $\pi_0(T)_{\bar{k}}$ is $\ZZ/2\ZZ \times \ZZ/2\ZZ$ with Frobenius
exchanging the direct factors.  Finally, let $K'/K$ be a cubic unramified extension and $S_K = \Res_{K'/K} U_1(L/K')$.  If
$T_K$ is the subtorus with character lattice $X^*(S_K) / \langle (1,1,1) \rangle$, then $\pi_0(T)_{\bar{k}}$ is $\ZZ/2\ZZ \times \ZZ/2\ZZ$
with Frobenius of order $3$.  Each of these tori will have one invisible quasicharacter sheaf.
\end{example}

We may also extract information about the automorphism groups of quasicharacter sheaves from Theorem \ref{thm:CSXK}.

\begin{corollary}
Let $T$ be a N\'eron model for an algebraic torus over $K$.
For $\cs{E}\in \QCS(\pi_0(T))$, $\cs{F}\in {\QCS(T)}$ and $\cs{F}^0\in {\QCS(T^0)}$, there are canonical isomorphisms
\[
\Aut(\cs{E})\iso (\check{T}_\ell)^{\Weil{K}},
\quad
\Aut(\cs{F})\iso (\check{T}_\ell)^{\Weil{K}},
\quad
\Aut(\cs{F}^0)\iso 1,
\]
 where $\check{T}_\ell \ceq \Hom(X_*(T),\EEx)$, the $\ell$-adic dual torus to $T_K$.
\end{corollary}

\begin{proof}
We already know $\Aut(\cs{E}) =1$ from Proposition~\ref{prop:connected}, part \ref{x3}.
By Theorem~\ref{thm:CSXK}, 
\[
\Aut(\cs{F})  \iso \Hom((\pi_0(T)_{\bFq})_{\Weil{\Fq}},\EEx).
\]
By \cite{bitan:discriminant}*{Eq 3.1} again, 
\[
\Hom((\pi_0(T)_{\bFq})_{\Weil{\Fq}},\EEx) \iso
\Hom(X_*(T)_{\Weil{K}},\EEx).
\]
But $\Hom(X_*(T)_{\Weil{K}},\EEx) \iso \Hom(X_*(T),\EEx)^{\Weil{K}}$.
So, for any quasicharacter sheaf $\cs{F}$ for $T$,
\[
\Aut(\cs{F}) \iso (\check{T}_\ell)^{\Weil{K}},
\]
canonically.
The case $X= \pi_0(T)$ is handled by the same argument, replacing Theorem~\ref{thm:CSXK} with
Theorem~\ref{thm:snake} and Definition~\ref{def:QCS}, as in the proof of Corollary~\ref{cor:CS_tori},
after observing that $\pi_0(\pi_0(T)_{\bFq}) = \pi_0(T)_{\bFq}$.
\end{proof}

\begin{remark}
Since $\pi_0(T)_{\bFq} = X_*(T)_{\mathcal{I}_K}$ by \cite{bitan:discriminant}*{Eq 3.1}, we have
\[\Hom(\pi_0(T)(\Fq), \EEx)
= \Hom((X_*(T)_{\mathcal{I}_K})^{\Weil{\Fq}}, \EEx)
= \Hom(X_*(T)_{\mathcal{I}_K}, \EEx)_{\Weil{\Fq}} 
= \Hh^1(\Weil{\Fq}, \check{T}_\ell^{\mathcal{I}_K}).
\]
By the Langlands correspondence for $p$-adic tori \cite{yu:09a}, 
\[
\Hom(T(K), \EEx) 
\iso 
\Hh^1(\Weil{K}, \check{T}_\ell),
\]
where we refer to continuous cohomology, since $\Hom(T(K), \EEx)$ refers to continuous group homomorphisms $T(K)\to \EEx$.
It now follows from the inflation-restriction exact sequence that the following diagram commutes:
\[
\begin{tikzcd}
   0 \arrow{r} & \Hom(\pi_0(T)(\Fq), \EEx) \arrow{r}{\text{inf}} \arrow{d}{\iso}
    & \arrow{d}{\iso} \Hom_\text{}(T(K),\EEx) \arrow{r}{\text{res}} & \Hom_\text{}(T^0(R),\EEx) \arrow{r} \arrow{d}{\iso} & 0\\    
    0 \arrow{r}  
 & \Hh^1(\Weil{\Fq}, \check{T}_\ell^{\mathcal{I}_K}) \arrow{r}{\text{inf}}
 & \Hh^1(\Weil{K}, \check{T}_\ell) \arrow{r}{\text{res}} 
 & \Hh^1(\mathcal{I}_K, \check{T}_\ell)^{\Weil{\Fq}} \arrow{r}  
 & 0.
\end{tikzcd}
\] 
Combining this with Corollary~\ref{cor:CS_tori}  produces the following commutative diagram of exact sequences:
\begin{equation}\label{eq:GLCT}
  \begin{tikzcd}[row sep=20, column sep=15]
{}  & 0 \arrow{d} & 0 \arrow{d} &  & \\ 
   & \Hh^2(X_*(T)_{\mathcal{I}_K},\EEx)^{\Weil{\Fq}}  \arrow{d} & \arrow{d} \Hh^2(X_*(T)_{\mathcal{I}_K},\EEx)^{\Weil{\Fq}} & 0 \arrow{d} & \\
    0 \rar & \QCSiso{\pi_0(T)} \arrow{r}{\pi_0^*} \arrow{d}
    & \QCSiso{T} \arrow{r}{\iota_0^*}  \arrow{d} & \QCSiso{T^0} \arrow{r} \arrow{d} & 0\\
   0 \arrow{r}  
 & \Hh^1(\Weil{\Fq}, \check{T}_\ell^{\mathcal{I}_K}) \arrow{r}{\text{inf}} \arrow{d}
 & \Hh^1(\Weil{K}, \check{T}_\ell) \arrow{r}{\text{res}} \arrow{d}
 & \Hh^1(\mathcal{I}_K, \check{T}_\ell)^{\Weil{\Fq}} \arrow{r} \arrow{d}  
 & 0\\
 &  0  & 0 & 0 & 
  \end{tikzcd}
 \end{equation}
It is natural to ask if the vertical surjections can be defined directly, without making use of local class field theory,
for which the results of \cite{suzuki-yoshida:12a}  and \cite{suzuki:14a} may be helpful.
The case $T_K = \Gm{K}$ is already very interesting, in which case \eqref{eq:GLCT} becomes
\begin{equation}\label{eq:Gm}
\begin{tikzcd}
    0 \rar & \QCSiso{\ZZ} \arrow{r}{\pi_0^*} \arrow{d}{\iso}
    & \QCSiso{\Gm{K}^\text{Neron}} \arrow{r}{\iota_0^*}  \arrow{d}{\iso} & \QCSiso{\Gm{R}} \arrow{r} \arrow{d}{\iso} & 0\\
    0 \arrow{r}  
 & \Hom(\Weil{\Fq}, \EEx) \arrow{r}{\text{inf}}
 & \Hom(\Weil{K}, \EEx) \arrow{r}{\text{res}} 
 & \Hom(\mathcal{I}_K, \EEx) \arrow{r}  
 & 0
\end{tikzcd}
\end{equation}
We suspect that the general case of \eqref{eq:GLCT}, where $K$ is any local field and $T_K$ is any
torus over $K$, may be deduced from \eqref{eq:Gm}.
In Section~\ref{ssec:wrK} we develop a tool for further work in that direction.
\end{remark}

\subsection{Weil restriction and quasicharacter sheaves}\label{ssec:wrK}

Let $K'/K$ be a finite Galois extension of local fields and
let $k'/k$ be the corresponding extension of residue fields.
Let $R$ and $R'$ be the rings of integers of $K$ and $K'$, respectively.
Suppose $X \in \mathcal{N}$ and set $X_{K'} \ceq X_K \times_\Spec{K} \Spec{K'}$
and let $X'$ be a N\'eron model for $X_{K'}$.

\begin{proposition}\label{prop:wrK}
The canonical closed immersion 
\[
X_K \hookrightarrow \Res_{K'/K} X_{K'}
\]
of $K$-group schemes
induces a map of $\Fq$-group schemes 
\[
f : \Gr_R(X) \to \Res_{k'/k} \Gr_{R'}(X')
\] 
which, through quasicharacter sheaves, induces
\[
\begin{tikzcd}[column sep=60]
\Hom_{\text{}}(X_K(K'), \EEx) \arrow{r}{\chi \mapsto \chi\vert_{X(K)}} &\Hom_{\text{}}(X_K(K), \EEx).
\end{tikzcd}
\]
\end{proposition}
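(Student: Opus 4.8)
The plan is to build the map $f$ from the interaction of Weil restriction with N\'eron models and with the Greenberg transform, and then to run $f$ through the function-sheaf dictionary, in the form of Lemma~\ref{lem:pullback} and Theorem~\ref{thm:QCXK}, following the template already set in Section~\ref{ssec:basechange}. Write $\widetilde{X}_K \ceq \Res_{K'/K} X_{K'}$. This is again an object of $\mathcal{N}_K$: by the adjunction between base change and Weil restriction, a homomorphism $\mathbb{G}_\text{a} \to \widetilde{X}_K$ over $K$ is the same datum as a homomorphism $\mathbb{G}_\text{a} \to X_{K'}$ over $K'$, which is zero since $X_{K'}$ again lies in $\mathcal{N}_{K'}$. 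The same adjunction, applied at both ends, shows that $\Res_{R'/R} X'$ is a N\'eron model of $\widetilde{X}_K$: for every smooth $R$-scheme $Z$ one has
\[
\Hom_R(Z, \Res_{R'/R} X') = \Hom_{R'}(Z \times_R R', X') = \Hom_{K'}(Z_K \times_K K', X_{K'}) = \Hom_K(Z_K, \widetilde{X}_K),
\]
the middle equality being the N\'eron property of $X'$; here $\Res_{R'/R} X'$ is representable by an $R$-scheme by \cite{bosch-lutkebohmert-reynaud:NeronModels}*{\S 7.6}. Now the canonical closed immersion $\iota \colon X_K \hookrightarrow \widetilde{X}_K$ extends, by the N\'eron mapping property, to an $R$-morphism $X \to \Res_{R'/R} X'$; applying the Greenberg transform $\Gr_R$ and invoking its compatibility with Weil restriction \cite{bertrapelle-gonzales:Greenberg}, which supplies a canonical isomorphism $\Gr_R(\Res_{R'/R} X') \iso \Res_{k'/k} \Gr_{R'}(X')$, produces the desired $f \colon \Gr_R(X) \to \Res_{k'/k} \Gr_{R'}(X')$.

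Next I would identify $f$ on $\Fq$-points. Tracing $\Gr_R(X)(\Fq) \iso X(R) = X_K(K)$ together with
\[
(\Res_{k'/k} \Gr_{R'}(X'))(\Fq) \iso \Gr_{R'}(X')(k') \iso X'(R') = X_{K'}(K') = X_K(K'),
\]
one finds that $f$ induces on $\Fq$-points exactly the inclusion $X_K(K) \hookrightarrow X_K(K')$ coming from $\iota$. By Lemma~\ref{lem:pullback}, the pullback functor $f^* \colon \QC(\Res_{k'/k} \Gr_{R'}(X')) \to \QC(\Gr_R(X))$ is then dual to this inclusion: $f^*$, the two trace-of-Frobenius homomorphisms, and the precomposition map $\chi \mapsto \chi\vert_{X(K)}$ sit in a commutative square, which is the Greenberg-transform analogue of Proposition~\ref{prop:csbe}. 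Since the comparison between the Greenberg transform and Weil restriction also holds at each finite level, matching $\Gr^R_m(\Res_{R'/R} X')$ with $\Res_{k'/k} \Gr^{R'}_{n(m)}(X')$ for a level function $n(m)$ dictated by the ramification of $K'/K$, the morphism $f$ is the projective limit of the induced maps $\Gr^R_m(X) \to \Res_{k'/k} \Gr^{R'}_{n(m)}(X')$, so $f^*$ carries quasicharacter sheaves for $\widetilde{X}_K$ to quasicharacter sheaves for $X_K$. Combining this with Theorem~\ref{thm:QCXK}(i),(ii) applied to both $X_K$ and $\widetilde{X}_K$ — under which the trace of Frobenius realises the surjections of $\QCiso{X_K}$ and $\QCiso{\widetilde{X}_K}$ onto $\Hom_\text{qc}(X_K(K), \EEx)$ and $\Hom_\text{qc}(\widetilde{X}_K(K), \EEx) = \Hom_\text{qc}(X_K(K'), \EEx)$ — we conclude that $f^*$ descends to the restriction map $\chi \mapsto \chi\vert_{X(K)}$, as asserted.

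The main obstacle is the construction of $f$, and inside it the compatibility of the Greenberg transform with Weil restriction: this is the one place where more than formal category theory is needed, both for the isomorphism $\Gr_R \circ \Res_{R'/R} \iso \Res_{k'/k} \circ \Gr_{R'}$ on the projective limits and for its truncations, where the ramification of $K'/K$ forces the level function $n(m)$ and hence dictates how the two pro-structures are to be matched. A secondary point to treat with care is the representability of $\Res_{R'/R} X'$ by a scheme when the N\'eron model $X'$ is not of finite type; at finite level this is unproblematic, since each $\Gr^{R'}_n(X')$ is a finite-type, hence quasi-projective, group scheme over $k'$. Once these ingredients are in hand, the passage to quasicharacters is a diagram chase of exactly the kind carried out in Section~\ref{ssec:basechange}.
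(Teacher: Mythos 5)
Your proposal is correct and follows essentially the same route as the paper: establish $\Res_{R'/R}(X')$ as a N\'eron model for $\Res_{K'/K}(X_{K'})$, extend the closed immersion $X_K \hookrightarrow \Res_{K'/K}(X_{K'})$ to a morphism of N\'eron models by the mapping property, apply the Greenberg transform together with its compatibility with Weil restriction from Bertrapelle--Gonz\'ales (the same reference the paper uses), pull back quasicharacter sheaves via Lemma~\ref{lem:pullback}, and identify the resulting map on trace-of-Frobenius characters as restriction, arguing as in Proposition~\ref{prop:csbe}. The minor differences are: you give an explicit adjunction argument for the N\'eron model claim (where the paper cites an adaptation of BLR \S7.6, Prop.~6), and you construct $f$ at the projective-limit level $\Gr_R$ first and then descend, whereas the paper works bottom-up, applying $\Gr^R_n$ at each finite level to get $f_n : \Gr^R_{n-1}(X) \to \Res_{k'/k}\Gr^{R'}_{en-1}(X')$ with the level function pinned down explicitly by the ramification index $e$, and only then passes to limits on the groups $\Hom_{n-1}(X(R),\EEx)$. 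The paper's bottom-up order sidesteps the representability and compatibility concerns that you correctly flag in your last paragraph, since each $\Gr^R_n$ lands in finite-type schemes; but both versions land in the same place and rely on the same three ingredients.
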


\begin{proof}
By the N\'eron mapping property, the canonical closed immersion
\[
X_K\hookrightarrow \Res_{K'/K}(X_{K'})
\]
 extends uniquely to a morphism
\begin{equation}\label{me}
X\to \Res_{R'/R}(X')
\end{equation}
 of smooth $R$-group schemes.
Applying the functor $\Gr^R_{n}$ to \eqref{me}
and using \cite{bertapelle-gonzales:Greenberg}*{Thm 1.1} defines the morphism of smooth group schemes
\begin{equation}\label{men}
f_n: \Gr_{n-1}^R(X) \to \Res_{k'/k} \Gr_{en-1}^{R'}(X'),
\end{equation}
where $e$ is the ramification index of $K'/K$.
Using Lemma~\ref{lem:pullback}, \eqref{men} induces a functor 
\begin{equation}\label{men-pulled}
f_n^* : \CS(\Res_{k'/k} \Gr_{en-1}^{R'}(X'))\to \CS(\Gr_{n-1}^{R}(X)).
\end{equation}
Since 
\[
\left(\Res_{k'/k} \Gr_{en-1}^{R'}(X') \right)(\Fq) = \left(\Gr_{en-1}^{R'}(X')\right)(k'),
\]
it follows from Lemma~\ref{lem:pullback} that the pullback functor \eqref{men-pulled} actually induces
\[ 
\Hom_{en-1}(X'(R'),\EEx) \to \Hom_{n-1}(X(R)),\EEx)
\]
Since $X'$ is a N\'eron model for $X_{K'}$ and $X$ is a N\'eron model for $X_K$,
 this may be re-written as
 \[ 
\Hom_{en-1}(X_{K'}(K'),\EEx)= \Hom_{en-1}(X_{K}(K'),\EEx) \to \Hom_{n-1}(X_K(K)),\EEx)
\]
Passing to limits now defines
\[ 
\Hom_\text{}(X_K(K'),\EEx) \to \Hom_\text{}(X_K(K),\EEx)
\]
Argue as in Proposition~\ref{prop:csbe} to see that this is indeed restriction of characters.
\end{proof}

\subsection{Transfer of quasicharacters}\label{ssec:transfer}

Let $K$ and $L$ be local fields with rings of integers $\OK$ and $\OL$, respectively. 
Pick uniformizers $\varpi_K$ and $\varpi_L$ for $\OK$ and $\OL$, respectively;
what follows will not depend on these choices.
Suppose $\ell$ is invertible in the residue fields of $K$ and $L$.

We begin with $X_K\in \mathcal{N}_K$ with N\'eron model $X$ and $Y_{L}\in \mathcal{N}_L$ with N\'eron model $Y$.
Suppose $m$ is a positive integer such that 
\[
\OK/\varpi_K^{m}\OK \iso \OL/\varpi_L^{m}\OL.
\]
Suppose also that
\begin{equation}\label{eq:schematic_transfer}
X \times_{\Spec{\OK}} \Spec{\OK/\varpi_K^{m}\OK} \iso Y \times_{\Spec{\OL}} \Spec{\OL/\varpi_L^{m}\OL}
\end{equation}
as smooth group schemes over $\OK/\varpi_K^{m}\OK$. 
Then
\[
\Gr^{\OK}_{m-1}(X) \iso \Gr^{\OL}_{m-1}(Y)
\]
as smooth group schemes over $\Fq$.  Accordingly, by Lemma~\ref{lem:pullback},
the isomorphism above determines an equivalence of categories
\begin{equation}\label{eq:categorical_transfer}
\CS(\Gr^{\OK}_{m-1}(X)) \cong \CS(\Gr^{\OL}_{m-1}(Y))
\end{equation}
which induces an isomorphism
\begin{equation}\label{eq:transfer}
\Hom_{m-1}(X(K),\EEx)  \iso  \Hom_{m-1}(Y(L),\EEx).
\end{equation}
The isomorphism \eqref{eq:transfer} is an instance of {\it transfer} of (certain) quasicharacters between $X(K)$ and $Y(L)$. 
We now recognize this transfer of quasicharacters as a consequence of the
equivalence of categories of quasicharacter sheaves \eqref{eq:categorical_transfer}.

The isomorphism \eqref{eq:schematic_transfer} can indeed exist between quasicharacters of
algebraic tori over local fields, even when the characteristics of $K$ and $L$ differ.
Suppose $T_K$ and $M_L$ are tori over local fields $K$ and $L$,
splitting over $K'$ and $L'$, respectively.
Then $T_K$ and $M_L$ are said to be \emph{$n$-congruent} \cite{chai-yu:01a}*{\S 2} if there are isomorphisms
 \begin{align*}
  \alpha : \OO{K'}/\varpi_K^n \OO{K'} &\to \OO{L'}/\varpi_{L}^n \OO{L'} \\
  \beta : \Gal(K'/K) &\to \Gal(L'/L) \\
  \phi : X^*(T_K) &\to X^*(M_L)
 \end{align*}
 satisfying the conditions
 \begin{enumerate}
  \item $\alpha$ induces an isomorphism $\OK/\varpi_K^n \OK \to \OO{L}/\varpi_{L}^n \OO{L}$,
  \item $\alpha$ is $\Gal(K'/K)$-equivariant relative to $\beta$, and
  \item $\phi$ is $\Gal(K'/K)$-equivariant relative to $\beta$.
 \end{enumerate}
If $T_K$ and $M_L$ are $n$-congruent then $\alpha$, $\beta$ and $\phi$ determine an isomorphism 
\begin{equation}\label{transfer}
  \Hom_{n-1}(T_K(K), \EEx) \iso \Hom_{n-1}(M_L(L),\EEx).
\end{equation}
Note that if $T_K$ and $M_L$ are $n$-congruent, then they are $m$-congruent for every
$m \leq n$.

Now let $T$ be a N\'eron model for $T_K$ and let $M$ be a N\'eron model for $M_L$.
One of the main results of \cite{chai-yu:01a} gives an isomorphism of group schemes 
\[
T \times_{\Spec{\OK}} \Spec{\OK/\varpi_K^m\OK} \iso M \times_{\Spec{\OL}} \Spec{\OL/\varpi_L^m\OL}
\] 
assuming that $T_K$ and $M_L$ are sufficiently congruent.
They define a quantity $h$ (the smallest integer so that $\varpi^h$ lies in the
Jacobian ideal associated to a natural embedding of $T_K$ into an induced torus \cite{chai-yu:01a}*{\S 8.1}) and show
that if $n > 3h$ and $T_K$ and $M_L$ are $n$-congruent then there is a canonical isomorphism of smooth group schemes
 $
\Gr_{n-3h-1}(T) \to \Gr_{n-3h-1}(M)
 $
 determined by $\alpha, \beta$ and $\phi$ \cite{chai-yu:01a}*{Thm. 8.5}.
Combining this with the paragraph above gives the following instance of the geometrization of the transfer of quasicharacters.

\begin{proposition}\label{prop:transfer}
 With notation above, suppose that $T_K$ and $M_L$ are $n$-congruent and $n > 3h$.  Set $m = n-3h$.
 Then there is a canonical equivalence of categories
 \[
 \CS(\Gr^{\OK}_{m-1}(T)) \iso \CS(\Gr^{\OL}_{m-1}(M))
 \]
 determined by $\alpha, \beta$ and $\phi$ inducing an isomorphism
 \[
\Hom_{m-1}(T(K), \EEx) \iso  \Hom_{m-1}(M(L), \EEx)
 \]
compatible with \eqref{transfer}.
\end{proposition}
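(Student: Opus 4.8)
The plan is to feed Chai and Yu's isomorphism of Greenberg transforms into Lemma~\ref{lem:pullback}. By \cite{chai-yu:01a}*{Thm.~8.5}, the hypotheses $n > 3h$ and $n$-congruence produce a \emph{canonical} isomorphism of $\Fq$-group schemes
\[
g : \Gr^{\OK}_{m-1}(T) \xrightarrow{\ \sim\ } \Gr^{\OL}_{m-1}(M), \qquad m = n - 3h,
\]
depending only on $\alpha$, $\beta$ and $\phi$; here the truncation level matches because $m-1 = n-3h-1$. Both sides are smooth commutative group schemes over $\Fq$, being truncated Greenberg transforms of the smooth, locally finite type Néron models $T$ over $\OK$ and $M$ over $\OL$ (as recorded in \S\ref{ssec:qc_on_GN}), so Lemma~\ref{lem:pullback} applies to $g$ and to $g^{-1}$. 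First I would use the identity $(g^{-1})^* \circ g^* = (g \circ g^{-1})^* = \id$ from the last sentence of that lemma to conclude that
\[
g^* : \QC(\Gr^{\OL}_{m-1}(M)) \to \QC(\Gr^{\OK}_{m-1}(T))
\]
is an equivalence of rigid monoidal categories (indeed an isomorphism of categories), with quasi-inverse $(g^{-1})^*$; reading in the opposite direction gives the canonical equivalence $\QC(\Gr^{\OK}_{m-1}(T)) \iso \QC(\Gr^{\OL}_{m-1}(M))$ asserted in the proposition. Canonicity and dependence only on $(\alpha,\beta,\phi)$ are inherited from \cite{chai-yu:01a}*{Thm.~8.5} and the functoriality of pullback.

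Next I would identify the induced map on isomorphism classes with a transfer of characters. The commuting square in Lemma~\ref{lem:pullback} relates $g^*$, the two trace-of-Frobenius homomorphisms, and the homomorphism of character groups dual to $g$ on $\Fq$-points. Using the canonical identification $\Gr^{\OK}_{m-1}(T)(\Fq) \iso T(\OK/\varpi_K^m\OK)$ from \S\ref{ssec:rev_Greenberg}, together with the fact that $T(K) = T(\OK)$ surjects onto $T(\OK/\varpi_K^m\OK)$ and hence $T(\OK/\varpi_K^m\OK)^* \iso \Hom_{m-1}(T(K),\EEx)$ (and similarly for $M$ over $L$), this square becomes a commuting square whose bottom edge is precisely an isomorphism
\[
\Hom_{m-1}(T(K),\EEx) \iso \Hom_{m-1}(M(L),\EEx)
\]
induced by $g$. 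This is the displayed isomorphism of the proposition, and since $g$ depends only on $(\alpha,\beta,\phi)$, so does it.

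Finally, for compatibility with \eqref{transfer}: since $n$-congruence of $T_K$ and $M_L$ implies $m$-congruence for all $m \leq n$ (noted just before \eqref{transfer}), one has the level-$(n-1)$ transfer \eqref{transfer} and the level-$(m-1)$ transfer constructed above, linked by the inclusions $\Hom_{m-1}(-,\EEx) \hookrightarrow \Hom_{n-1}(-,\EEx)$ (a quasicharacter factoring through $R/\varpi^m$ a fortiori factors through $R/\varpi^n$). I would check that the resulting square commutes by observing that both transfers are induced, via the construction of \cite{chai-yu:01a}, by the same congruence data $(\alpha,\beta,\phi)$ and are compatible with the truncation maps $T(\OK/\varpi_K^n\OK) \to T(\OK/\varpi_K^m\OK)$; dualizing and combining with the previous paragraph then identifies the level-$(m-1)$ transfer obtained from $g$ with the restriction of \eqref{transfer}.

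The step I expect to be the main obstacle is this last one: matching the isomorphism on characters that $g$ produces through trace of Frobenius with Chai and Yu's transfer map \eqref{transfer}. This requires unwinding the definition of \eqref{transfer} in \cite{chai-yu:01a} and verifying that it is realized by the very group-scheme isomorphism of \cite{chai-yu:01a}*{Thm.~8.5} that we pulled back — everything else (smoothness of the truncated Greenberg transforms, the equivalence property of $g^*$, the trace-of-Frobenius square, and the bookkeeping $m-1 = n-3h-1$) is a direct application of Lemma~\ref{lem:pullback} and the properties of the Greenberg functor recalled in \S\ref{ssec:rev_Greenberg}.
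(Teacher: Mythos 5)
Your proposal is correct and takes essentially the same route as the paper: the paper also feeds the canonical isomorphism of truncated Greenberg transforms from Chai--Yu Theorem~8.5 into Lemma~\ref{lem:pullback} (via the general discussion of the isomorphism \eqref{eq:schematic_transfer} $\Rightarrow$ \eqref{eq:categorical_transfer} $\Rightarrow$ \eqref{eq:transfer} in the paragraphs preceding the proposition) to obtain both the equivalence of categories and the induced isomorphism of truncated character groups. The only difference is that you spell out the compatibility with \eqref{transfer} more explicitly; the paper asserts it without elaboration, so your care there is welcome and does not represent a divergence of strategy.
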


\begin{bibdiv}
\begin{biblist}

\bib{aubert-cunningham:02a}{incollection}{
      author={Aubert, Anne-Marie},
      author={Cunningham, Clifton},
       title={{An introduction to sheaves on adic spaces for $p$-adic group
  representation theory}},
        date={2002},
   booktitle={{Functional analysis, VII (Dubrovnik, 2001)}},
      series={Various Publ. Ser. (Aarhus)},
      volume={46},
   publisher={Univ. Aarhus},
     address={Aarhus},
       pages={{11\ndash 51}},
}

\bib{beilinson-bernstein-deligne:81a}{article}{
      author={Be{\u\i}linson, Alexander~A.},
      author={Bernstein, Joseph},
      author={Deligne, Pierre},
       title={Faisceaux pervers},
        date={1982},
     journal={Ast\'erisque},
      volume={100},
       pages={5\ndash 171},
}

\bib{bernstein-luntz:equivariant_sheaves}{book}{
      author={Bernstein, Joseph},
      author={Luntz, Valery},
       title={Equivariant sheaves and functors},
      series={Lecture Notes in Mathematics},
   publisher={Springer-Verlag},
     address={Berlin},
        date={1994},
      volume={1578},
}

\bib{bertapelle-gonzales:Greenberg}{unpublished}{
      author={Bertapelle, Alessandra},
      author={{Gonz\'ales-Avil\'es}, Cristian~D.},
       title={{The Greenberg functor revisited}},
        date={2014},
         url={http://arxiv.org/abs/1311.0051},
        note={\href{http://arxiv.org/abs/1311.0051}{arXiv:1311.0051
  [math.NT]}},
}

\bib{bitan:discriminant}{article}{
      author={Bitan, Rony~A.},
       title={The discriminant of an algebraic torus},
        date={2011},
     journal={J. Number Theory},
      volume={131},
      number={9},
       pages={1657\ndash 1671},
}

\bib{bosch-lutkebohmert-reynaud:NeronModels}{book}{
      author={Bosch, Siegfried},
      author={L\"utkebohmert, Werner},
      author={Reynaud, Michel},
       title={{N\'eron models}},
   publisher={Springer-Verlag},
     address={Berlin},
        date={1980},
}

\bib{boyarchenko-drinfeld:10a}{unpublished}{
      author={Boyarchenko, Mitya},
      author={Drinfeld, Vladimir},
       title={A motivated introduction to character sheaves and the orbit
  method for unipotent groups in positive characteristic},
        date={2010},
        note={\href{http://arxiv.org/abs/math/0609769}{arXiv:0609769
  [math.RT]}},
}

\bib{brown:CohomologyGrps}{book}{
      author={Brown, Kenneth~S.},
       title={Cohomology of groups},
      series={Graduate Texts in Mathematics},
   publisher={{Springer-Verlag}},
     address={New York},
        date={1982},
      volume={87},
}

\bib{cartan-eilenberg:HomologicalAlgebra}{book}{
      author={Cartan, Henri},
      author={Eilenberg, Samuel},
       title={Homological algebra},
   publisher={Princeton U.P.},
     address={Princeton},
        date={1956},
}

\bib{cassels-frohlich:AlgebraicNumberTheory}{book}{
      editor={Cassels, John W.~S.},
      editor={Frohlich, Albrecht},
       title={Algebraic number theory},
     edition={2},
   publisher={London Math Society},
     address={London},
        date={2010},
}

\bib{chai-yu:01a}{article}{
      author={Chai, Ching-Li},
      author={Yu, Jiu-Kang},
       title={{Congruences of N\'eron models for tori and the Artin
  conductor}},
        date={2001},
     journal={Annals of Math},
      volume={154},
      number={2},
       pages={347\ndash 382},
}

\bib{cunningham-kamgarpour:13a}{article}{
      author={Cunningham, Clifton},
      author={Kamgarpour, Masoud},
       title={{Geometrization of continuous characters of
  $\mathbb{Z}_p^\times$}},
        date={2013},
     journal={Pacific J. Math.},
      volume={261},
      number={1},
       pages={95\ndash 99},
}

\bib{deligne:SGA4.5}{book}{
      author={Deligne, Pierre},
       title={{Cohomologie \'etale}},
      series={Lecture Notes in Mathematics},
   publisher={{Springer-Verlag}},
     address={Berlin},
        date={1977},
      volume={569},
}

\bib{deligne:80a}{article}{
      author={Deligne, Pierre},
       title={{La conjecture de Weil II}},
        date={1980},
     journal={{Inst. Hautes \'Etudes Sci. Publ. Math.}},
      volume={52},
       pages={137\ndash 252},
}

\bib{deligne:02a}{article}{
      author={Deligne, Pierre},
       title={{Cat\'egories tensorielles}},
        date={2002},
     journal={{Mosc. Math. J.}},
      volume={2},
      number={2},
       pages={227\ndash 248},
}

\bib{deligne-katz:SGA7.2}{book}{
      author={Deligne, Pierre},
      author={Katz, Nicholas},
       title={{Groupes de monodromie en g\'eom\'etrie alg\'ebrique II}},
      series={{Lecture Notes in Mathematics}},
   publisher={{Springer-Verlag}},
     address={Berlin},
        date={1973},
      volume={340},
}

\bib{demazure-gabriel:GroupesAlgebriques}{book}{
      author={Demazure, Michel},
      author={Gabriel, Pierre},
       title={{Groupes alg\'ebriques}},
   publisher={{Masson \& Cie}},
     address={Paris},
        date={1970},
}

\bib{etingof:09a}{unpublished}{
      author={Etingof, Pavel},
      author={Gelaki, Shlomo},
      author={Nikshych, Dmitri},
      author={Ostrik, Victor},
       title={Tensor categories},
        date={2009},
         url={www-math.mit.edu/~etingof/tenscat.pdf},
        note={{Lecture notes for MIT 18.769, available at
  \url{www-math.mit.edu/~etingof/tenscat.pdf}}},
}

\bib{vdGeer-Moonen:AbelianVarieties}{unpublished}{
      author={Geer, Gerard {van}~{der}},
      author={Moonen, Ben},
       title={Abelian varieties},
        date={2013},
         url={http://staff.science.uva.nl/~bmoonen/boek/BookAV.html},
        note={Available at
  \url{http://staff.science.uva.nl/~bmoonen/boek/BookAV.html}},
}

\bib{greenberg:61}{article}{
      author={Greenberg, Marvin~J.},
       title={{Schemata over local rings I}},
        date={1961},
     journal={Annals of Math},
      volume={73},
      number={2},
       pages={624\ndash 648},
}

\bib{greenberg:63a}{article}{
      author={Greenberg, Marvin~J.},
       title={{Schemata over local rings II}},
        date={1963},
     journal={Annals of Math},
      volume={78},
      number={2},
       pages={256\ndash 266},
}

\bib{EGAIV2}{article}{
      author={Grothendieck, Alexandre},
       title={{\'El\'ements de g\'eom\'etrie alg\'ebrique IV. \'Etude locale
  des sch\'emas et des morphismes de sch\'emas. II}},
        date={1965},
     journal={Inst. Hautes \'Etudes Sci. Publ. Math.},
      number={24},
}

\bib{EGAIV3}{article}{
      author={Grothendieck, Alexandre},
       title={{\'El\'ements de g\'eom\'etrie alg\'ebrique IV. \'Etude locale
  des sch\'emas et des morphismes de sch\'emas. III}},
        date={1966},
     journal={Inst. Hautes \'Etudes Sci. Publ. Math.},
      number={28},
}

\bib{SGA5}{book}{
      author={Grothendieck, Alexandre},
       title={{S\'eminaire de G\'eom\'etrie Alg\'ebrique du Bois Marie 1965--66
  SGA5: Cohomologie $\ell$-adique et Fonctions L}},
      series={{Lecture Notes in Mathematics}},
   publisher={{Springer-Verlag}},
        date={1977},
      volume={589},
}

\bib{kamgarpour:09a}{article}{
      author={Kamgarpour, Masoud},
       title={{Stacky abelianization of algebraic groups}},
        date={2009},
     journal={{Transform. Groups}},
      volume={14},
      number={4},
       pages={825\ndash 846},
}

\bib{lang:56a}{article}{
      author={Lang, Serge},
       title={Algebraic groups over finite fields},
        date={1956},
     journal={Amer. J. Math.},
      volume={78},
       pages={555\ndash 563},
}

\bib{laumon:87a}{article}{
      author={Laumon, G\'erard},
       title={{Transformation de Fourier, constantes d'\'equations
  fonctionnelles et conjecture de Weil}},
        date={1987},
     journal={{Inst. Hautes \'Etudes Sci. Publ. Math.}},
      volume={65},
       pages={131\ndash 210},
}

\bib{lusztig:85a}{article}{
      author={Lusztig, George},
       title={{Character sheaves I}},
        date={1985},
     journal={Advances in Math.},
      volume={56},
       pages={193\ndash 237},
}

\bib{lusztig:disconnected1}{article}{
      author={Lusztig, George},
       title={{Character sheaves on disconnected groups I}},
        date={2003},
     journal={Representation Theory},
      volume={7},
       pages={374\ndash 403},
}

\bib{nicaise-sebag:motivicSerre}{article}{
      author={Sebag, Julien},
      author={Nicaise, Johannes},
       title={{Motivic Serre invariants and Weil restriction}},
        date={2008},
     journal={J. Algebra},
      volume={319},
      number={4},
       pages={1585\ndash 1610},
}

\bib{suzuki:14a}{unpublished}{
      author={Suzuki, Takashi},
       title={{Grothendieck's pairing on N\'eron component groups: Galois
  descent from the semistable case}},
        date={2015},
        note={\href{http://arxiv.org/abs/1410.3046}{arXiv:1410.3046
  [math.NT]}},
}

\bib{suzuki-yoshida:12a}{inproceedings}{
      author={Suzuki, Takashi},
      author={Yoshida, Manabu},
       title={{A refinement of the local class field theory of Serre and
  Hazewinkel}},
        date={2012},
   booktitle={{Algebraic number theory and related topics}},
      series={{RIMS K\^oky\^uroku Bessatsu, B32}},
   publisher={{Res. Inst. Math. Sci. (RIMS)}},
     address={Kyoto},
       pages={163\ndash 191},
}

\bib{vakil:Algebraic_Geometry}{unpublished}{
      author={Vakil, Ravi},
       title={Foundations of algebraic geometry},
        date={April 29, 2015},
         url={http://math.stanford.edu/~vakil/216blog/},
        note={Available at \url{http://math.stanford.edu/~vakil/216blog/}},
}

\bib{weibel:Homological_Algebra}{book}{
      author={Weibel, Charles~A.},
       title={An introduction to homological algebra},
      series={Cambridge Studies in Advanced Mathematics},
   publisher={Cambridge U.P.},
     address={Cambridge, UK},
        date={1994},
      volume={38},
}

\bib{yu:09a}{incollection}{
      author={Yu, Jiu-Kang},
       title={{On the local Langlands correspondence for tori}},
        date={2009},
   booktitle={Ottawa lectures on admissible representations of reductive
  $p$-adic groups},
      editor={Cunningham, Clifton},
      editor={Nevins, Monica},
      series={Fields Institute Monographs},
      volume={26},
   publisher={American Math Society},
     address={Providence R.I.},
       pages={177\ndash 183},
}

\end{biblist}
\end{bibdiv}

\end{document}